\crefname{subsection}{Subsection}{Subsections}
\Crefname{subsection}{Subsection}{Subsections}
\newtheorem{theorem}{Theorem}[section]
\newtheorem{proposition}{Proposition}[section]
\newtheorem{lemma}{Lemma}[section]
\newtheorem{corollary}{Corollary}[section]
\theoremstyle{definition}
\newtheorem{definition}{Definition}[section]
\theoremstyle{remark}
\newtheorem*{remark}{Remark}
\numberwithin{equation}{section}
\title{Symplectic structures preserved by geodesic symmetries}
\author{Pierre Bieliavsky and Maxime Willaert\\ UCLouvain (Belgium)}
\date{}
\begin{document}

\maketitle

\begin{abstract}
Answering a conjecture by S. Kobayashi, in 1986, K. Sekigawa and L. Vanhecke proved that an almost hermitian manifold whose local geodesic symmetries preserve the Kähler 2-form  is a locally symmetric hermitian space.

In the present paper, we relax the hermitean hypothesis by only requiring the manifold to be symplectic. In other words,  we study the symplectic manifolds equipped with a symplectic connection whose geodesic symmetries are (local) symplectomorphisms. We call ``S-type'' these affine symplectic manifolds.

\end{abstract}

\tableofcontents

\section{Introduction}

\noindent The local geodesic symmetries of a linear, say torsionfree, connection on a smooth manifold $M$ entirely determine  the connection. Indeed, if $\nabla$ denote the covariant derivative 
in the tangent bundle and if $s_x:U\to U$ ($U$ open set in $M$) denotes the geodesic symmetry at point $x$ in $M$, then (see \cite{Bert:2011}), for all tangent vector fields $X$ and $Y$ on $M$, one has
$$
\nabla_XY\left.\right|_x\;=\;\frac{1}{2}\,\left[X\,,\,Y\;+\;s_{x\star}Y\right]_x\;.
$$
From the above formula, one easily checks that every (geodesic) symmetry invariant tensor field on $M$ is necessarily parallel. The converse of the last statement of course does not hold in general. For instance,
if the metric tensor of a Riemannian manifold is invariant under the geodesic symmetries of the associated Levi-Civita connection, the Riemannian manifold is a locally symmetric space. Observe that, in this last case, the geometric property of the metric being symmetry invariant is encoded by a differential property : the fact that the Riemann curvature tensor $R$ of $(M,g)$ is parallel.

\vspace{2mm}

\noindent In \cite{SkVh:1986}, Sekigawa and Vanhecke proved that an almost Hermitean manifold whose geodesic symmetries preserve the Kähler form is necessarily locally a Hermitian symmetric space.
In particular, this result positively answered (and widely generalized)  a conjecture by Soshishi Kobayashi \cite{SkVh:1986} stating that the class of  homogeneous compact Kahler manifolds whose geodesic symmetries preserve the symplectic structure coincides with the class of compact Hermitean symmetric spaces.

\vspace{2mm}

\noindent It is tempting to investigate the situation within a purely symplectic framework i.e. without the condition on the manifold to be almost Hermitean.  

\noindent In the present article, we define the notion of \emph{S-type symplectic connection} on a symplectic manifold as a symplectic connection whose (local) geodesic symmetries preserve the symplectic structure.

\noindent The main result of the paper is the fact that, when analytic, a connection is of S-type is  equivalent to a recursive sequence of curvature conditions, which we derive by relying on the Jacobi equation to compute the Taylor expansion of the symplectic form in normal coordinates. Namely, we prove the following:
\begin{theorem}\label{THMPRINC}
Let $(M,\omega,\nabla)$ be a Fedosov manifold i.e. $(M,\omega)$ is a symplectic manifold and $\nabla$ is a torsionfree linear covariant derivative in $T(M)$ such that $\nabla\omega=0$. Let $R$ denote the curvature tensor of $\nabla$. 

\noindent For every tangent vector field $X$ on  $M$, consider the associated field of endomorphisms of  $T(M)$ defined by 
$$
\Pi(Y)\;=\;R(X,Y)X\;.
$$
For every integer $r\geq2$, define recursively the following fields of endomorphisms 
\begin{equation*}
    Q^r=(r-1)\nabla^{r-2}_X\Pi+\sum_{q=2}^{r-2}\binom{r-1}{q+1}(\nabla^{r-2-q}_X\Pi)\circ Q^q
\end{equation*}
with initial condition $Q^2:=\Pi$ (the sum being defined to be $0$ for $r=2,3$).

\noindent Finally, for every odd integer $r\geq3$ set :
\begin{equation}
    P^r:=(r+2)Q^r+\sum_{q=2}^{(r-1)/2}\binom{r+2}{q+1}(Q^q)^\top\circ Q^{r-q}
\end{equation}
where ${}^\top$ stands for symplectic adjoint.

\noindent Then, the following hold.

\begin{enumerate}
\item[(i)] If, for every point $x$ in $M$, the symplectic structure $\omega$ is invariant under $s_x$ i.e. locally around $x$:
$$
s_x^\star\omega\;=\;\omega\;,
$$
then,  for every $r=3,5,7,...$, the endomorphism $P^r_x$ belongs to the linear symplectic Lie algebra $\mathfrak{sp}(T_x(M),\omega_x)$.

\item[(ii)] Conversely, if $\nabla$ is analytic at $x$ and such that, for every $X$, $P^r_x$ belongs to the linear symplectic Lie algebra $\mathfrak{sp}(T_x(M),\omega_x)$ for every $r=3,5,7..$,
then, the symplectic structure $\omega$ is (locally) invariant under $s_x$.
\end{enumerate}
\end{theorem}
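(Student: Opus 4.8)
We work throughout in a $\nabla$-normal coordinate chart $(U,v)$ centered at $x$. In such a chart the geodesic symmetry $s_x$ is the map $v\mapsto -v$, so, writing $\omega=\tfrac12\omega_{ij}(v)\,dv^i\wedge dv^j$ on $U$, the identity $s_x^\star\omega=\omega$ holds on $U$ precisely when every component $\omega_{ij}$ is an even function of $v$, i.e. when all homogeneous parts of odd degree in the Taylor expansion of $\omega$ at $x$ vanish. Since $\nabla\omega=0$ and the Christoffel symbols vanish at the centre of the chart, the degree-$1$ part is automatically zero; so the conditions to be analyzed are the vanishing of the homogeneous parts of odd degree $r=3,5,7,\dots$, which we shall match with $P^r_x\in\mathfrak{sp}(T_x(M),\omega_x)$.

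Fix $\xi\in T_x(M)$ and set $\gamma(t)=\exp_x(t\xi)$. For $w_1,w_2\in T_x(M)$ let $J_a$ be the Jacobi field along $\gamma$ with $J_a(0)=0$, $\nabla_tJ_a(0)=w_a$; in normal coordinates $J_a(t)=t\,w_a$, so the scalar $g(t):=\omega_{\gamma(t)}(J_1(t),J_2(t))$ equals $t^2\,\omega_{t\xi}(w_1,w_2)$, and $s_x^\star\omega=\omega$ near $x$ is equivalent to: for every $\xi,w_1,w_2$, $g$ is an even function of $t$. Writing $J_a(t)=\mathcal A(t)w_a$ with $\mathcal A(t):T_x(M)\to T_{\gamma(t)}(M)$, letting $P_t$ be parallel transport along $\gamma$, and putting $B(t):=P_t^{-1}\circ\mathcal A(t)\in\mathrm{End}(T_x(M))$, the Jacobi equation becomes the operator ODE $\ddot B=\widehat\Pi(t)\,B$ with $B(0)=0$, $\dot B(0)=\mathrm{id}$, where $\widehat\Pi(t)$ is $\Pi$ (with $X=\dot\gamma$) transported back to $x$ along $\gamma$; in particular $\widehat\Pi(0)=\Pi_x=Q^2$ and the $k$-th Taylor coefficient of $\widehat\Pi$ at $0$ is $\tfrac1{k!}\nabla^k_X\Pi\big|_x$ (this is the geodesic-extension meaning of $\nabla^k_X\Pi$, for which $\nabla_XX=0$ along $\gamma$; it depends on $X$ only through $X_x=\xi$). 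Since $\nabla\omega=0$ makes $\omega$ invariant under $P_t$, one obtains $g(t)=\omega_x\bigl(B(t)w_1,B(t)w_2\bigr)$.

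The combinatorial heart is next. Expanding $B(t)$ as a power series and feeding it into $\ddot B=\widehat\Pi B$, a comparison with the recursion defining $Q^r$ — the binomial weights $\binom{r-1}{q+1}$ being exactly those produced by iterating the Jacobi equation — gives $B(t)=t\,\mathrm{id}+\sum_{r\ge2}\tfrac{t^{r+1}}{(r+1)!}\,Q^r$. With $C(t):=B(t)/t$ and $N(t)\in\mathrm{End}(T_x(M))$ defined by $\omega_x(N(t)u,v)=\omega_x(C(t)u,C(t)v)$, i.e. $N(t)=C(t)^\top\circ C(t)$, we have $g(t)=t^2\,\omega_x(N(t)w_1,w_2)$, so $g$ is even (for all $w_1,w_2$) iff $N(t)$ is an even $\mathrm{End}$-valued function. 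Expanding $C^\top\circ C$, using $(A\circ B)^\top=B^\top\circ A^\top$ and grouping the terms $(Q^q)^\top\circ Q^{r-q}$ in pairs $q\leftrightarrow r-q$, a second bookkeeping computation yields, for every odd $r\ge3$, $N_r=\tfrac{1}{(r+2)!}\bigl(P^r+(P^r)^\top\bigr)$, with $N_r$ the coefficient of $t^r$ in $N(t)$ (the weights $\binom{r+2}{q+1}$ in $P^r$ being exactly the ones that appear), while $N_1=0$ automatically. Since $A\in\mathfrak{sp}(T_x(M),\omega_x)\iff A+A^\top=0$, this shows: for a fixed $\xi=X_x$, $g$ is even for all $w_1,w_2$ iff $P^r_x\in\mathfrak{sp}(T_x(M),\omega_x)$ for every odd $r\ge3$. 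Assertion (i) follows at once: if $s_x^\star\omega=\omega$ then each $g$ is even (directly, from $\omega_{t\xi}=\omega_{-t\xi}$), whence $P^r_x\in\mathfrak{sp}$ for every $X$; no analyticity is used, the identities above being formal identities of Taylor coefficients. For (ii): the hypothesis gives $N_r=0$ for all odd $r$ and all $\xi,w_1,w_2$, hence every odd-degree homogeneous part of the Taylor expansion of each $\omega_{ij}$ at $x$ vanishes; when $\nabla$ is analytic at $x$ the normal chart is analytic and $\omega$ coincides with its Taylor series there, so $\omega_{ij}(v)=\omega_{ij}(-v)$ and $s_x^\star\omega=\omega$ near $x$.

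The main obstacle is the pair of combinatorial identities of the third paragraph — that the weighted recursion for $Q^r$ is precisely the one generated by $\ddot B=\widehat\Pi B$, and that the pairing in $C^\top\circ C$ reproduces the binomial weights defining $P^r$; these are elementary in nature but delicate in the bookkeeping of the binomial coefficients. A secondary point needing care is the interpretation of $\nabla^k_X\Pi$ (equivalently, restricting attention to geodesic extensions with $\nabla_XX=0$), so that $Q^r_x$ and $P^r_x$ depend only on $X_x$ and "for every $X$" means "for every direction $\xi\in T_x(M)$".
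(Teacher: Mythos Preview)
Your proof is correct and follows essentially the same route as the paper: Jacobi fields along radial geodesics in normal coordinates, the Jacobi equation to produce the recursion for $Q^r$, and the Taylor expansion of $\omega(J_1,J_2)$ to produce the $P^r$ condition. Your operator packaging via $B$, $C$, $N=C^\top C$ is a mild streamlining of the paper's componentwise argument (in particular it absorbs the paper's separate treatment of the functions $h_{1i}$ versus $h_{ij}$ into a single computation), and the two combinatorial identities you flag as the crux are exactly what the paper verifies by induction.
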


\noindent We then use this result combined with some in \cite{BourgeoisCahen:1998} and \cite{CahenGuttRawnsley:2000} to prove 
\begin{corollary}
\begin{enumerate}
\item[(i)]
S-type connections are necessarily \emph{preferred} in the sense of Bourgeois and Cahen \cite{BourgeoisCahen:1998}. 
\item[(ii)] Every analytic preferred connection on a \emph{symplectic surface} is S-type. In particular \cite{BourgeoisCahen:1998}, S-type connections are not homogeneous. 

\item[(iii)] The class of S-type connections contains the class of Ricci-type connections. 
\item[(iv)] However, in higher dimensions, S-type connections are generally not of \emph{Ricci-type} in the sense of \cite{CahenGuttRawnsley:2000}. 
\end{enumerate}
\end{corollary}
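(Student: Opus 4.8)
I will establish the four assertions in turn, using throughout the equivalence furnished by \Cref{THMPRINC} between the S-type property at $x$ (for analytic $\nabla$) and the membership $P^r_x\in\mathfrak{sp}(T_xM,\omega_x)$ for all odd $r$.

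\emph{(i).} The plan is to show that S-type already forces the preferred equation at the lowest order $r=3$. From the recursions, $Q^2=\Pi$, $Q^3=2\nabla_X\Pi$ and (the defining sum being empty) $P^3=5Q^3=10\,\nabla_X\Pi$; hence S-type yields $\nabla_X\Pi\in\mathfrak{sp}$ for every $X$. Writing $\tilde R(X,Y,Z,W)=\omega(R(X,Y)Z,W)$, which is antisymmetric in $(X,Y)$ and, because $\nabla\omega=0$, symmetric in $(Z,W)$, this membership reads $(\nabla_X\tilde R)(X,Y,X,Z)=(\nabla_X\tilde R)(X,Z,X,Y)$. I would polarise this cubic-in-$X$ identity and contract the two curvature slots carrying the repeated $X$ against $\omega^{-1}$: that contraction of $\tilde R$ is exactly the Ricci tensor $r$, and after using the contracted second Bianchi identity to trade the derivation slot against the curvature slots, the identity collapses to $\mathfrak{S}_{X,Y,Z}(\nabla_X r)(Y,Z)=0$, the Bourgeois--Cahen preferred equation \cite{BourgeoisCahen:1998}. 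The only real obstacle is bookkeeping: tracking the combinatorial constants from the polarisation and checking that the Bianchi contractions recombine into a \emph{nonzero} multiple of the preferred equation.

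\emph{(ii).} On a surface $\mathfrak{sp}(T_xM,\omega_x)=\mathfrak{sl}(2,\mathbb R)$ is the space of traceless endomorphisms, so $P^r_x\in\mathfrak{sp}$ reduces to $\operatorname{tr}P^r_x=0$. Since $\Lambda^2T_x^*M$ is one-dimensional the curvature is $R(X,Y)=\omega(X,Y)\rho$, with $\rho$ the field of traceless Ricci endomorphisms, $r(Y,Z)=\omega(\rho Z,Y)$. A short induction then shows that each $\nabla^k_X\Pi$, and hence each $Q^r$, is the rank-one operator $w_r\otimes\omega(X,\cdot)$ for a vector $w_r$ satisfying a scalar recursion; in particular $\operatorname{tr}Q^r=\omega(X,w_r)=:t_r$, while every product $(Q^q)^\top Q^{r-q}$ is proportional to $X\otimes\omega(X,\cdot)$ and is therefore traceless because $\omega(X,X)=0$. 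Thus $\operatorname{tr}P^r=(r+2)t_r$. Setting $\lambda_k:=\omega(X,(\nabla^k_X\rho)X)=-(\nabla^k_X r)(X,X)$, the preferred condition is exactly the vanishing of the cubic form $X\mapsto(\nabla_X r)(X,X)$ at every point (equivalently, of the completely symmetric part of $\nabla r$); restricting it along each geodesic, where $\dot\gamma$ is parallel, and differentiating in the arc-length parameter yields $\lambda_k=0$ for all $k\ge1$. Fed into the recursion this gives $t_3=2\lambda_1=0$ and $t_r=\lambda_0\,t_{r-2}$ for odd $r\ge5$, so $t_r=0$ for every odd $r$; by the analytic part (ii) of \Cref{THMPRINC}, $s_x^\star\omega=\omega$. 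Applying this to the non-homogeneous preferred surface connections constructed in \cite{BourgeoisCahen:1998} yields the non-homogeneity statement.

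\emph{(iii).} For a Ricci-type connection ($W=0$) I would substitute the explicit curvature formula of \cite{CahenGuttRawnsley:2000} into $\Pi$, obtaining $\Pi=\frac{-1}{2(n+1)}(3(\rho X)\otimes\omega(X,\cdot)+\omega(\rho X,X)\,\mathrm{Id}+X\otimes\omega(\rho X,\cdot))$, a rank $\le 2$ operator plus a scalar. The first-order structure equations of a Ricci-type connection express $\nabla_X\rho$ algebraically through an auxiliary vector field and propagate it, so that every $\nabla^k_X\Pi$ becomes an algebraic expression in $\rho$, that vector field, $X$ and $\omega$. Mimicking the surface computation, I would then prove by induction on $r$ a closed normal form for $Q^r$ from which $P^r\in\mathfrak{sp}$ can be read off. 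The hard part is precisely this induction: the identity component of $\Pi$, absent in dimension two, makes $Q^r$ of mixed tensor type, so the tracelessness of the terms $(Q^q)^\top Q^{r-q}$ no longer follows from $\omega(X,X)=0$ and must be extracted from genuine cancellations dictated by the structure equations.

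\emph{(iv).} I would first isolate the clean fact that every locally symmetric Fedosov manifold ($\nabla R=0$) is S-type: then $\nabla^m_X\Pi=0$ for $m\ge1$, the recursion collapses to $Q^r=\Pi\circ Q^{r-2}$ for $r\ge4$ with $Q^3=0$, hence $Q^r=0$ for every odd $r\ge3$; and in $P^r$ each product $(Q^q)^\top Q^{r-q}$ has $q+(r-q)=r$ odd, so one factor carries an odd index and vanishes, giving $P^r=0\in\mathfrak{sp}$. It then suffices to produce, in some dimension $2n\ge4$, a symmetric symplectic space with $W\neq0$; any such space is S-type but not Ricci-type, which is (iv). The only delicate point is exhibiting a concrete non-Ricci-type symmetric model, for which the product of two symmetric symplectic surfaces of nonzero curvature is the natural candidate.
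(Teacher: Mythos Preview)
Your arguments for (ii) and (iv) are correct and are pleasant variants of what the paper does. For (ii) the paper argues term-by-term in the expansion $Q^r=\sum c^r_{i_1\dots i_k}\nabla^{i_1}_X\Pi\circ\cdots\circ\nabla^{i_k}_X\Pi$: it proves $\nabla^j_X\Pi\circ\nabla^q_X\Pi=0$ whenever $q\ge1$ and $\operatorname{tr}\nabla^q_X\Pi=0$ for $q\ge1$, then kills each term of odd total degree using cyclicity of the trace. Your scalar recursion $t_r=\lambda_0\,t_{r-2}$ with $t_3=0$ is an equivalent packaging of the same cancellations and is arguably cleaner. For (iv) the paper instead invokes the Cahen--Gutt--Horowitz--Rawnsley rigidity theorem (compact homogeneous Ricci-type with finite $\pi_1$ forces $\mathbb{P}_n(\mathbb{C})$) and points to Hermitian symmetric spaces other than projective space; your direct route via a product of curved symplectic surfaces is more elementary and self-contained.

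For (i) you are making life much harder than necessary. Once you have $\nabla_X\Pi\in\mathfrak{sp}(T_xM,\omega_x)$ you are done in one line: elements of $\mathfrak{sp}$ are traceless, and $\operatorname{tr}(\nabla_X\Pi)=\operatorname{tr}\big(Y\mapsto(\nabla_XR)(X,Y)X\big)=(\nabla_Xr)(X,X)$, which is exactly the preferred condition. There is no need to polarise a cubic identity or invoke the second Bianchi identity; the ``bookkeeping obstacle'' you flag simply does not arise.

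The genuine gap is in (iii). Your outline stops precisely where the work begins, and the sentence ``must be extracted from genuine cancellations dictated by the structure equations'' is not a proof. The paper's argument hinges on four concrete identities, all consequences of the Ricci-type structure equations (the existence of $U$, $f$, $K$): first $\nabla^2_X\Pi=0$ (so only $\Pi$ and $\nabla_X\Pi$ ever appear), then $\nabla_X\Pi\circ\nabla_X\Pi=0$, $\Pi\circ\nabla_X\Pi=0$, and $\nabla_X\Pi\circ\Pi=\tfrac{2}{n+1}r(X,X)\,\nabla_X\Pi$. These force every nonzero monomial $\nabla^{i_1}_X\Pi\circ\cdots\circ\nabla^{i_k}_X\Pi$ in $Q^r$ with $r$ odd to have $i_1=1$, $i_2=\cdots=i_k=0$, hence $Q^r$ is a scalar multiple of $\nabla_X\Pi$; one checks directly that $\nabla_X\Pi\in\mathfrak{sp}$, and the cross terms $(Q^q)^\top Q^{r-q}$ in $P^r$ are handled by observing that one of $q,r-q$ is odd and using the same composition identities to reduce again to a multiple of $\nabla_X\Pi$. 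Without isolating $\nabla^2_X\Pi=0$ and the three composition rules, there is no mechanism to control the ``mixed tensor type'' you worry about, and the induction you propose does not close.
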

\noindent We also show that, in contrast with the almost Hermitean situation, S-type connections are generally far from being symmetric symplectic.

\vspace{3mm}

\noindent The figure below summarizes the inclusions and intersections between the different classes of symplectic connections mentioned here.

\vspace{4mm}

\tikzset{every picture/.style={line width=0.75pt}} 

\begin{tikzpicture}[x=0.75pt,y=0.75pt,yscale=-1,xscale=1]

\draw   (64.64,109) -- (575,109) -- (575,292.5) -- (64.64,292.5) -- cycle ;
\draw   (298.57,71) -- (443,71) -- (443,197) -- (298.57,197) -- cycle ;
\draw   (78.77,109) -- (524,109) -- (524,196.77) -- (78.77,196.77) -- cycle ;
\draw   (371,146) -- (563.38,146) -- (563.38,275.96) -- (371,275.96) -- cycle ;
\draw   (169.22,146) -- (362,146) -- (362,275.96) -- (169.22,275.96) -- cycle ;

\draw (84.79,240.68) node [anchor=north west][inner sep=0.75pt]  [font=\Large] [align=left] {S-type};
\draw (192.09,239.06) node [anchor=north west][inner sep=0.75pt]  [font=\Large] [align=left] {Ricci-type};
\draw (383.15,208.48) node [anchor=north west][inner sep=0.75pt]  [font=\Large,rotate=-0.52] [align=left] {Analytic preferred\\in dimension 2};
\draw (362.73,78.77) node [anchor=north west][inner sep=0.75pt]  [font=\Large] [align=left] {Kähler};
\draw (88.48,116.81) node [anchor=north west][inner sep=0.75pt]  [font=\Large] [align=left] {Locally symmetric};

\end{tikzpicture}

\vspace{2mm}

\noindent We end this introduction by mentioning a possible application in the context of quantization. It is well established that symplectic symmetric spaces are a particularly nice category of spaces in view of quantization.
The main reason for this was intuited by Andr\'e and Juliane Unterberger as well as Alan Weinstein in \cite{Unterberger:1984} and  \cite{WeinsteinTTSSS:1993}. Basically, the geodesic symmetries quantize as unitary involutions inducing in turn a manageable equivariant symbolic calculus. 

\noindent One can reasonably expect that our present context (S-type connections) will allow for similar constructions, therefore extending the symmetric case.

\section{Preliminaries}

\begin{definition}
Let $M$ be a smooth manifold with symplectic form $\omega$. A \textbf{symplectic connection} on $(M,\omega)$ is a torsionless affine connection $\nabla$ on $M$ such that $\nabla\omega=0$. A \textbf{Fedosov manifold} is a symplectic manifold together with a symplectic connection.
\end{definition}

\begin{definition}
    Let $\nabla$ be an affine connection on a smooth manifold $M$. Let $\mathcal{E}\subseteq TM$ denote the domain of the exponential map of $\nabla$. For a point $p\in M$, let $\exp_p:\mathcal{E}_p:=\mathcal{E}\cap T_pM\to M$ denote the restricted exponential map.

    Suppose that $U_p$ is a diffeomorphic image by $\exp_p$ of a star-shaped neighborhood $V_p$ of $0\in T_pM$, $U_p$ is then called a \textbf{normal neighborhood} of $p$.

    We say that $U_p$ is \textbf{symmetric} if $V_p=-V_p$, that is if for any $X\in \mathcal{E}_p$, $\exp_p(X)\in U_p$ implies $\exp_p(-X)\in U_p$. In that case the local \textbf{geodesic symmetry} centered at $p$ is defined to be the unique involutive diffeomorphism $s_p:U_p\to U_p$ such that $(s_p\circ\exp_p)(X)=\exp_p(-X)$ $\forall X\in V_p$.
\end{definition}

\begin{definition}
    A covariant $k$-tensor field $\alpha$ over $M$ is said to be \textbf{symmetric} for an affine connection $\nabla$ if for any point $p\in M$ there exists a symmetric normal neighborhood $U$ of $p$ such that $s^*_p(\alpha|_U)=(-1)^k\alpha|_U$.
\end{definition}

\begin{proposition}
    Let $\alpha$ be a covariant tensor field on $M$ and let $\nabla$ be an affine connection on $M$. If $\alpha$ is symmetric for $\nabla$ and if $\nabla$ is torsionless, then $\nabla\alpha=0$.
\end{proposition}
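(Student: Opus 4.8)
The plan is to reduce the claim to a pointwise computation carried out in $\nabla$-normal coordinates at an arbitrary point $p\in M$. Fix $p$ and, invoking the hypothesis that $\alpha$ is symmetric for $\nabla$, choose a symmetric normal neighborhood $U$ of $p$ on which $s_p^*(\alpha|_U)=(-1)^k\alpha|_U$, together with normal coordinates $(x^1,\dots,x^n)$ centered at $p$ with star-shaped, symmetric domain. I would then use two standard facts about normal coordinates of a \emph{torsionless} connection: first, the geodesics issuing from $p$ are the coordinate rays $t\mapsto tv$, so the geodesic symmetry is simply $s_p(x)=-x$ in these coordinates; second, the Christoffel symbols satisfy $\Gamma^m_{ij}(p)=0$ — this is exactly where torsionlessness enters, since in general only the symmetric part $\Gamma^m_{(ij)}(p)$ vanishes at the center. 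In particular, at $p$ the components of $\nabla\alpha$ reduce to ordinary partial derivatives of the components of $\alpha$.

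Next I would translate the symmetry condition into a parity statement on the components. Writing $\alpha=\alpha_{i_1\cdots i_k}\,dx^{i_1}\otimes\cdots\otimes dx^{i_k}$ on $U$ and using $s_p^*\,dx^i=-dx^i$, one gets
\[
s_p^*\alpha=(-1)^k\,\alpha_{i_1\cdots i_k}(-x)\,dx^{i_1}\otimes\cdots\otimes dx^{i_k},
\]
so the hypothesis $s_p^*\alpha=(-1)^k\alpha$ forces $\alpha_{i_1\cdots i_k}(-x)=\alpha_{i_1\cdots i_k}(x)$ throughout the coordinate domain; that is, each component function is even.

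Finally, an even smooth function has vanishing first-order partial derivatives at the origin, hence $\partial_j\alpha_{i_1\cdots i_k}(p)=0$ for all indices. Combining this with $\Gamma^m_{ij}(p)=0$, the coordinate expression for the covariant derivative yields $(\nabla\alpha)(p)=0$. Since $p$ was arbitrary, $\nabla\alpha\equiv 0$ on $M$.

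I do not anticipate a genuine obstacle: the argument is entirely routine once the two normal-coordinate facts are correctly invoked. The only point deserving care is that torsionlessness is precisely what is needed to make all Christoffel symbols vanish at the center — without it, the antisymmetric part of $\Gamma$ would survive at $p$ and the conclusion would fail — so this hypothesis cannot be dropped.
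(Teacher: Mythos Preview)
Your argument is correct and complete. The paper does not actually supply a proof of this proposition in Section~2; it is stated without argument. The introduction does, however, gesture at a different route: from the formula $\nabla_XY|_x = \tfrac{1}{2}[X,\,Y+s_{x\star}Y]_x$ (cited from \cite{Bert:2011}) one can ``easily check'' that symmetry-invariant tensors are parallel. Compared to that hinted approach, yours is the more elementary normal-coordinate computation---translate the symmetry hypothesis into evenness of the component functions, use that even smooth functions have vanishing first partials at the origin, and invoke the vanishing of the Christoffel symbols at the center of normal coordinates for a torsionless connection. Your proof is entirely self-contained and makes the role of torsionlessness explicit (it is precisely what kills the antisymmetric part of $\Gamma$ at $p$); the formula-based approach is conceptually tidier in that it encodes the connection directly in terms of the symmetries, but it requires that identity as input.
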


\begin{definition}
    A torsionless connection $\nabla$ over a symplectic manifold $(M,\omega)$ is said to be of \textbf{type S} (or \textbf{S-type}) if $\omega$ is symmetric for $\nabla$, that is if for any point $p\in M$ there exists a symmetric normal neighborhood $U$ of $p$ such that $s^*_p(\omega|_U)=\omega|_U$. Such a connection is necessarily symplectic.
\end{definition}

\section{Recursive curvature conditions for an S-type connection}\label{sect:RecCdt}

In this section we use the radial power expansion of the symplectic form in normal coordinates to derive a recursive sequence of curvature conditions, which are necessary for a symplectic connection to be of type S, and sufficient when the connection is analytic. The first of these conditions will allow us to prove that an S-type condition must necessarily be preferred.

\subsection{Jacobi fields}

So let $(M,\omega)$ be symplectic manifold with symplectic connection $\nabla$. Fix a point $p$ of $M$ and let $X_p$ be a nonzero element of $T_pM$. We complete $X_p=:b_1$ into a basis $(b_k)_{k=1}^{2n}$ of $T_pM$, and this basis induces a normal coordinate system $(x^1,...,x^{2n})$ over a neighborhood $U$ of $p$. The associated coordinate vector fields over $U$ are denoted by $(\partial_k)_{k=1}^{2n}$, in particular $\partial_k|_p=b_k$ for all $1\leq k\leq 2n$.

For some $\epsilon>0$, we let $\gamma:]-\epsilon,\epsilon[\to U$ be the geodesic with initial velocity $\dot{\gamma}(0)=X_p$, and write $X:=\dot{\gamma}$. For $i\geq 2$, we define the vector field $Z_i(t):=t\partial_i|_{\gamma(t)}$ along $\gamma$. Our first objective is to find an expression for $\nabla_X^rZ_i(0)$ $i\geq 2$, $r\geq 0$ in terms of the covariant derivatives of $R$, the curvature of $\nabla$ (here $\nabla_X$ denotes the covariant derivative along $\gamma$).

Order $0$ and $1$ are easily obtained from the definition $Z_i(t)=t(\partial_i\circ\gamma)(t)$, we have
\begin{equation}\label{eq:Zi0}
    Z_i(0)=0
\end{equation}
and by the Leibniz rule
\begin{equation*}
    \nabla_XZ_i(t)=(\partial_i\circ\gamma)(t)+t\nabla_X(\partial_i\circ\gamma)(t)
\end{equation*}
which implies
\begin{equation}\label{eq:Zi1}
    \nabla_XZ_i(0)=\partial_i|_p.
\end{equation}

To obtain derivatives of higher order, the key observation is that $Z_i$ satisfies the Jacobi equation
\begin{equation*}
    \nabla^2_XZ_i=R(\dot{\gamma},Z_i)\dot{\gamma}
\end{equation*}
indeed, since $Z_i(t)=\left.\frac d{ds}\right|_{s=0}\exp_p(t(X_p+s\partial_i|_p))$, $Z_i$ is an infinitesimal geodesic variation of $\gamma$ (and $\nabla$ is assumed to be torsionless). Introducing the endomorphism field $\Pi:=R(\dot{\gamma},-)\dot{\gamma}$ along $\gamma$, we rewrite the Jacobi equation for $Z_i$
\begin{equation}\label{eq:Jacobi}
    \nabla^2_XZ_i=\Pi Z_i.
\end{equation}
We immediately compute
\begin{equation}\label{eq:Zi2}
    \nabla^2_XZ_i(0)=\Pi|_00=0
\end{equation}
and for $r\geq 0$, applying $\nabla^r_X$ to \ref{eq:Jacobi}, we obtain
\begin{equation}\label{eq:DerJacobi}
    \nabla^{r+2}_XZ_i=\sum_{q=0}^r\binom{r}{q}(\nabla^{r-q}_X\Pi)(\nabla^q_XZ_i).
\end{equation}

\subsection{Recursion operators}
\noindent We start by observing 
\begin{lemma} For $r\geq 2$, one has
\begin{equation}\label{eq:ZiQr}
    \nabla^{r+1}_XZ_i(0)=Q^r_0\partial_i|_p
\end{equation}
where the endomorphism fields $Q^r$ , $r\geq 2$ along $\gamma$ are defined by the recurrence formula
\begin{equation}\label{eq:RecQ}
    Q^r=(r-1)\nabla^{r-2}_X\Pi+\sum_{q=2}^{r-2}\binom{r-1}{q+1}(\nabla^{r-2-q}_X\Pi)\circ Q^q
\end{equation}
(by convention, the sum from $q=2$ to $r-2$ is $0$ for $r=2,3$, allowing us to compute the initial elements $Q^2$ and $Q^3$). 
\end{lemma}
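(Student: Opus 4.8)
The plan is to prove \eqref{eq:ZiQr} by strong induction on $r\geq 2$, feeding the derived Jacobi recursion \eqref{eq:DerJacobi} into the inductive hypothesis and extracting the coefficient of $\partial_i|_p$ after evaluating at $t=0$. The mechanism is that \eqref{eq:DerJacobi} expresses $\nabla^{r+2}_XZ_i$ in terms of the lower derivatives $\nabla^q_XZ_i$ for $0\leq q\leq r$, and we already know the three lowest ones explicitly: $Z_i(0)=0$ by \eqref{eq:Zi0}, $\nabla_XZ_i(0)=\partial_i|_p$ by \eqref{eq:Zi1}, and $\nabla^2_XZ_i(0)=0$ by \eqref{eq:Zi2}. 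So the pattern to keep track of is that $\nabla^q_XZ_i(0)$ vanishes for $q=0$ and $q=2$, equals $\partial_i|_p$ for $q=1$, and equals $Q^{q-1}_0\,\partial_i|_p$ for $q\geq 3$; the induction must carry all of these simultaneously.

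First I would set up the base cases. Writing $\rho:=r+1$, I want $\nabla^{\rho}_XZ_i(0)=Q^{\rho-1}_0\partial_i|_p$. For $\rho=3$ (i.e.\ the claim for $r=2$): apply \eqref{eq:DerJacobi} with exponent $r=1$, giving $\nabla^3_XZ_i=(\nabla_X\Pi)(Z_i)+\Pi(\nabla_XZ_i)$; evaluating at $0$ kills the first term ($Z_i(0)=0$) and leaves $\Pi|_0\,\partial_i|_p$, which matches $Q^2_0\partial_i|_p$ since $Q^2=\Pi$. For $\rho=4$ (the claim for $r=3$): apply \eqref{eq:DerJacobi} with $r=2$, so $\nabla^4_XZ_i=(\nabla^2_X\Pi)Z_i+2(\nabla_X\Pi)(\nabla_XZ_i)+\Pi(\nabla^2_XZ_i)$; at $0$ the first and third terms vanish, leaving $2(\nabla_X\Pi)|_0\,\partial_i|_p$, which is exactly $Q^3_0\partial_i|_p$ because the recursion \eqref{eq:RecQ} gives $Q^3=(3-1)\nabla_X\Pi=2\nabla_X\Pi$ with empty sum.

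For the inductive step, fix $r\geq 4$ and assume \eqref{eq:ZiQr} holds for all smaller indices. Apply \eqref{eq:DerJacobi} with exponent $r-1$ to get $\nabla^{r+1}_XZ_i=\sum_{q=0}^{r-1}\binom{r-1}{q}(\nabla^{r-1-q}_X\Pi)(\nabla^q_XZ_i)$, then evaluate at $t=0$. The terms $q=0$ and $q=2$ drop out; the term $q=1$ contributes $\binom{r-1}{1}(\nabla^{r-2}_X\Pi)|_0\,\partial_i|_p=(r-1)(\nabla^{r-2}_X\Pi)|_0\,\partial_i|_p$; and each term $q\geq 3$ contributes $\binom{r-1}{q}(\nabla^{r-1-q}_X\Pi)|_0\circ Q^{q-1}_0\,\partial_i|_p$. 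Re-indexing the latter sum by $q':=q-1$ running from $2$ to $r-2$ turns $\binom{r-1}{q}$ into $\binom{r-1}{q'+1}$ and the $\Pi$-exponent $r-1-q$ into $r-2-q'$, so the bracket acting on $\partial_i|_p$ is precisely $(r-1)\nabla^{r-2}_X\Pi+\sum_{q'=2}^{r-2}\binom{r-1}{q'+1}(\nabla^{r-2-q'}_X\Pi)\circ Q^{q'}$ evaluated at $0$, which is $Q^r_0$ by definition \eqref{eq:RecQ}. This closes the induction.

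The one genuinely delicate point — not a deep obstacle but the place where care is needed — is the bookkeeping of which low-order derivatives $\nabla^q_XZ_i(0)$ vanish and how the binomial index shift lines up the Jacobi recursion \eqref{eq:DerJacobi} with the defining recursion \eqref{eq:RecQ}; in particular one must check the convention that the sum in \eqref{eq:RecQ} is empty for $r=2,3$ is consistent with the fact that in those cases no $q\geq 3$ term appears in the evaluated Jacobi recursion. Everything else is a routine substitution, so I would present the base cases explicitly and then the re-indexing of the inductive step, leaving the vanishing of the $q=0,2$ terms to the reader as an immediate consequence of \eqref{eq:Zi0} and \eqref{eq:Zi2}.
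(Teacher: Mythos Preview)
Your proposal is correct and follows essentially the same route as the paper's own proof: apply \eqref{eq:DerJacobi} with exponent $r-1$, kill the $q=0$ and $q=2$ terms using \eqref{eq:Zi0} and \eqref{eq:Zi2}, pull out the $q=1$ term via \eqref{eq:Zi1}, and re-index the remaining sum by $q\mapsto q-1$ to match \eqref{eq:RecQ}. The only cosmetic difference is that the paper treats $r=2,3$ simultaneously with the general case by observing the induction hypothesis is vacuous there, whereas you verify those two base cases by hand; either presentation is fine.
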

\begin{proof}
    We proceed by induction. Take $r\geq 2$ and assume that for $2\leq q\leq r-2$, $\nabla^{q+1}_XZ_i(0)=Q^q_0\partial_i|_p$ (this is vacuously true for $r=2,3$, allowing us to treat the first two cases), then by applying $\nabla^{r-1}_X$ to the Jacobi equation (i.e. by taking equation \ref{eq:DerJacobi} and replacing $r$ by $r-1$) we obtain
    \begin{equation*}
        \nabla^{r+1}_XZ_i(0)=\sum_{q=0}^{r-1}\binom{r-1}{q}\nabla^{r-1-q}_X\Pi_0\nabla^{q}_XZ_i(0).
    \end{equation*}
    Using $Z_i(0)=\nabla^2_XZ_i(0)=0$, $\nabla_XZ_i(0)=\partial_i|_p$ the induction hypothesis and changing the index of summation from $q$ to $q-1$, this can be rewritten
    \begin{equation*}
        \begin{split}
            \nabla^{r+1}_XZ_i(0)&=\left.\left((r-1)\nabla^{r-2}_X\Pi+\sum_{q=2}^{r-2}\binom{r-1}{q+1}\nabla^{r-2-q}_X\Pi\circ Q^q\right)\right|_0\partial_i|_p\\
            &=Q^r_0\partial_i|_p.
        \end{split}
    \end{equation*}
\end{proof}

\noindent Relying on this formula, we can express the fields $Q^r$, $r\geq 2$ as follows:
\begin{lemma}
\begin{equation}\label{eq:CoeffQ}
    Q^r=\sum c^r_{i_1...i_k}\nabla^{i_1}_X\Pi\circ...\circ\nabla^{i_k}_X\Pi
\end{equation}
where we sum over $k\geq 1$, $i_j\geq 0$ with $r=2k+i_1+...+i_k$ and the constants $c^r_{i_1...i_k}$ are defined by
\begin{equation}\label{eq:Cr1}
    c^r_{i_1}=(r-1)
\end{equation}
for $k=1$ (which implies $i_1=r-2$) and
\begin{equation}\label{eq:CrRec}
    c^r_{i_1...i_k}=\binom{r-1}{i_1}c^{r-2-i_1}_{i_2...i_k}
\end{equation}
for $k>1$. 
\end{lemma}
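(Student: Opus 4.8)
The plan is to prove \eqref{eq:CoeffQ}–\eqref{eq:CrRec} by strong induction on $r$, simply unwinding the recurrence \eqref{eq:RecQ}. For the base cases $r=2$ and $r=3$, the sum in \eqref{eq:RecQ} is empty by convention, so $Q^r=(r-1)\nabla^{r-2}_X\Pi$; this is exactly the right-hand side of \eqref{eq:CoeffQ} restricted to its unique $k=1$ term (the constraint $r=2k+i_1$ forcing $i_1=r-2$), with coefficient $c^r_{r-2}=r-1$ as prescribed by \eqref{eq:Cr1}. So the base cases hold.

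For the inductive step, I would fix $r\geq 4$ and assume \eqref{eq:CoeffQ} holds for every $Q^q$ with $2\leq q\leq r-2$. Substituting these expansions into \eqref{eq:RecQ}, the first summand $(r-1)\nabla^{r-2}_X\Pi$ produces the $k=1$ term of \eqref{eq:CoeffQ}. For the remaining summands, a typical term of $Q^q$ is $c^q_{i_2\ldots i_k}\,\nabla^{i_2}_X\Pi\circ\cdots\circ\nabla^{i_k}_X\Pi$ with $q=2(k-1)+i_2+\cdots+i_k$ and $k\geq 2$; composing with $\nabla^{r-2-q}_X\Pi$ on the left and putting $i_1:=r-2-q$ gives a term $\binom{r-1}{q+1}c^q_{i_2\ldots i_k}\,\nabla^{i_1}_X\Pi\circ\cdots\circ\nabla^{i_k}_X\Pi$. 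One checks that the total degree is preserved, $2k+i_1+\cdots+i_k = 2+(r-2-q)+q = r$, so this is a legitimate term of the right-hand side of \eqref{eq:CoeffQ} with $k\geq 2$.

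It then remains to match coefficients. Given any multi-index $(i_1,\ldots,i_k)$ with $k\geq 2$ and $2k+i_1+\cdots+i_k=r$, the only summand of \eqref{eq:RecQ} and the only term of the corresponding $Q^q$ that can contribute it are $q=r-2-i_1$ (which lies in $\{2,\ldots,r-2\}$ precisely because $0\leq i_1\leq r-4$) and the tail $(i_2,\ldots,i_k)$; hence the contributions do not overlap, and the coefficient of $\nabla^{i_1}_X\Pi\circ\cdots\circ\nabla^{i_k}_X\Pi$ is $\binom{r-1}{q+1}c^q_{i_2\ldots i_k}$. Using $q+1=r-1-i_1$, the symmetry $\binom{r-1}{r-1-i_1}=\binom{r-1}{i_1}$, and $q=r-2-i_1$, this equals $\binom{r-1}{i_1}c^{r-2-i_1}_{i_2\ldots i_k}$, which is exactly $c^r_{i_1\ldots i_k}$ as defined by \eqref{eq:CrRec}, closing the induction.

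The argument is essentially bookkeeping; the only point requiring a little care is the reindexing $q\leftrightarrow i_1=r-2-q$ and verifying it sets up a bijection between the terms generated by \eqref{eq:RecQ} (together with the inductively-expanded $Q^q$) and the target multi-indices of \eqref{eq:CoeffQ}, so that no coefficients get inadvertently merged. I expect this — rather than any genuine difficulty — to be the main thing to get right.
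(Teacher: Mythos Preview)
Your proposal is correct and follows essentially the same approach as the paper: strong induction on $r$, substituting the inductive expansions of the $Q^q$ into the defining recurrence \eqref{eq:RecQ}, reindexing via $i_1:=r-2-q$, and invoking the binomial symmetry $\binom{r-1}{q+1}=\binom{r-1}{i_1}$ to identify the coefficient as $c^r_{i_1\ldots i_k}$. Your write-up is in fact a bit more careful than the paper's in spelling out why the reindexing is a bijection and why no contributions overlap.
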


\begin{proof}
    Here as well, we proceed by induction. Take $r\geq 2$ and assume that equation \ref{eq:CoeffQ} is true for $2\leq q\leq r-2$ (once again this is vacuously true for $r=2,3$). Taking the recurrence formula \ref{eq:RecQ} which defines $Q^r$, changing the index of summation from $q$ (ranging from $2$ to $r-2$) to $i_1:=r-2-q$ (ranging from $0$ to $r-4$), and using $\binom{r-1}{q+1}=\binom{r-1}{r-1-i_1}=\binom{r-1}{i_1}$ we get 
    \begin{equation*}
        \begin{split}
            Q^r&=(r-1)\nabla^{r-2}_X\Pi+\sum_{i_1=0}^{r-4}\binom{r-1}{i_1}\nabla^{i_1}_X\Pi\circ Q^{r-2-i_1}\\
            &=(r-1)\nabla^{r-2}_X\Pi+\sum_{i_1=0}^{r-4}\sum \binom{r-1}{i_1}c^{r-2-i_1}_{i_2...i_k}\nabla^{i_1}_X\Pi\circ \nabla^{i_2}_X\Pi\circ...\circ \nabla^{i_k}_X\Pi
        \end{split}
    \end{equation*}
    where the last sum ranges over $k\geq 2$, $i_2,...,i_k\geq 0$ with $r-2-i_1=2(k-1)+i_2+...+i_k$ or equivalently $r=2k+i_1+...+i_k$. This proves equation \ref{eq:CoeffQ} for our chosen $r$, with the coefficients $c^r_{i_1...i_k}$ defined as in equations \ref{eq:Cr1} and \ref{eq:CrRec}.
\end{proof}
\begin{remark}
\noindent Putting together \ref{eq:Cr1} and \ref{eq:CrRec}, we see that we have
\begin{equation*}
    c^r_{i_1...i_k}=\binom{r-1}{i_1}\binom{r-3-i_1}{i_2}\binom{r-5-i_1-i_2}{i_3}...\binom{r-2k+1-i_1-...-i_{k-1}}{i_k}
\end{equation*}
or equivalently, since $r=2k+i_1+...+i_k$
\begin{equation*}
    c^r_{i_1...i_k}=\binom{2k-1+i_1+...+i_k}{i_1}\binom{2(k-1)-1+i_2+...+i_k}{i_2}...\binom{1+i_k}{i_k}.
\end{equation*}
\end{remark}

\subsection{The main result}

Before proceeding, let us provide a few of the first terms
\begin{equation*}
    \begin{split}
        \nabla^3_XZ_i(0)=Q^2_0\partial_i|_p=&\Pi_0\partial_i|_p\\
        \nabla^4_XZ_i(0)=Q^3_0\partial_i|_p=&2\nabla_X\Pi_0\partial_i|_p\\
        \nabla^5_XZ_i(0)=Q^4_0\partial_i|_p=&\left(3\nabla^2_X\Pi_0+\Pi_0\circ\Pi_0\right)\partial_i|_p\\
        \nabla^6_XZ_i(0)=Q^5_0\partial_i|_p=&\left(4\nabla^3_X\Pi_0+4\nabla_X\Pi_0\circ\Pi_0+2\Pi_0\circ\nabla_X\Pi_0\right)\partial_i|_p\\
        \nabla^7_XZ_i(0)=Q^6_0\partial_i|_p=&\left(5\nabla^4_X\Pi_0+10\nabla^2_X\Pi_0\circ\Pi_0+10\Pi_0\circ\nabla^2_X\Pi_0\right.\\
        &\left.+3\nabla_X\Pi_0\circ\nabla_X\Pi_0+\Pi_0\circ\Pi_0\circ\Pi_0\right)\partial_i|_p.
    \end{split}
\end{equation*}

We will now use the formulae for $\nabla^rZ_i(0)$ $r\geq 0$ we just derived in order to rexpress the coefficients in the Taylor expansion of the components of the symplectic form $\omega$ (in the normal coordinate system $(U,x^1,...,x^{2n})$) in terms of the curvature $R$ (and its covariant derivatives) and $\omega$ itself. More precisely, for $2\leq i<j\leq 2n$ we define the function
\begin{equation}\label{eq:hijDef}
    h_{ij}(t):=\omega_{\gamma(t)}(Z_i(t),Z_j(t))=t^2(\omega_{ij}\circ\gamma)(t)
\end{equation}
for $t\in]\epsilon,\epsilon[$. And for $2\leq i$ we define the function
\begin{equation}\label{eq:h1iDef}
    h_{1i}(t):=\omega_{\gamma(t)}(X(t),Z_i(t))=t(\omega_{1i}\circ\gamma)(t).
\end{equation}
Since $\omega$ is skew-symmetric, this covers all of its components. The first derivatives of the $h_{ij}$ and $h_{1i}$ functions can be computed directly. The presence of a factor $t^2$ in \ref{eq:hijDef} and $t$ in \ref{eq:h1iDef} implies
\begin{equation*}
   h_{1i}(0)=h_{ij}(0)=\left.\frac{d}{dt}h_{ij}(t)\right|_{t=0}=0
\end{equation*}
while from $\nabla\omega=0$, $\nabla_XZ_i(0)=\partial_i|_p$ and $Z_i(0)=\nabla^2_XZ_i(0)=0$ we deduce
\begin{equation*}
    \begin{split}
        \left.\frac{d^2}{dt^2} h_{ij}(t)\right|_{t=0}&=2\omega_p(\nabla_XZ_i(0),\nabla_XZ_j(0))=2\omega_{12}(p)\\
        \left.\frac{d^3}{dt^3} h_{ij}(t)\right|_{t=0}&=0\\
        \left.\frac{d^2}{dt^2} h_{1i}(t)\right|_{t=0}&=0.
    \end{split}
\end{equation*}
Moving on to higher derivatives, for $r\geq 2$, $2\leq i<j$ we have
\begin{equation}\label{eq:Derhij}
\begin{split}
    &\left.\frac{d^{r+2}}{dt^{r+2}}\right|_{t=0}h_{ij}(t)=\sum_{q=0}^{r+2}\binom{r+2}{q}\omega_p(\nabla^{r+2-q}_XZ_i(0),\nabla^q_XZ_j(0))\\    &=(r+2)\omega_p(Q^r_0\partial_i|_p,\partial_j|_p)+(r+2)\omega_p(\partial_i|_p,Q^r_0\partial_j|_p)\\
    &+\sum_{q=2}^{r-4}\binom{r+2}{q+1}\omega_p(Q^{r-q}_0\partial_i|_p,Q^q_0\partial_j|_p)
\end{split}
\end{equation}
as well as
\begin{equation}\label{eq:Derh1i}
    \left.\frac{d^{r+1}}{dt^{r+1}}h_{1i}(t)\right|_{t=0}=\omega_p(X_p,\nabla^{r+1}_XZ_i(0))=\omega_p(\partial_1|_p,Q^r_0\partial_i|_p)
\end{equation}
where we use $\nabla_XX=\nabla_X\dot{\gamma}=0$. Now, as we saw earlier, $s^*_p(\omega|_U)=\omega|_U$ if and only if $\omega_{ij}\circ s_p=\omega_{ij}$ for all $1\leq i<j\leq 2n$, and since $(s_p\circ\gamma)(t)=\gamma(-t)$ (by definition of the geodesic symmetry centered at $p$) we see that $s^*_p(\omega|_U)=\omega|_U$ if and only if the $h_{ij}$ are even and the $h_{1i}$ odd for any choice of $X_p\in T_pM-\{0\}$, thus we have the following:
\begin{lemma}\label{lemma:Derh}
    If $s^*_p(\omega|_U)=\omega|_U$ then for any choice of $X_p\in T_pM-\{0\}$, $\forall 2\leq i<j\leq 2n$ the odd derivatives of $h_{ij}$ vanish and the even derivatives of $h_{1i}$ vanish at $0$.

    Conversely, if the connection $\nabla$ is analytic while the odd derivatives of $h_{ij}$ and the even derivatives of $h_{1i}$ vanish at $0$ for all $2\leq i<j\leq 2n$ and all choices of $X_p\in T_pM-\{0\}$, then there exists a symmetric normal neighborhood $V\subseteq U$ of $p$ such that $s^*_p(\omega|_V)=\omega|_V$.
\end{lemma}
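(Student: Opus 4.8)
The plan is to reformulate the parity conditions on the $h_{ij}$, $h_{1i}$ as a symmetry statement for the $2$-form $\beta:=\exp_p^*\omega$ on a neighbourhood of $0$ in $T_pM$, and then to use analyticity to upgrade ``all odd (resp. even) derivatives vanish at $0$'' into a genuine identity. The forward implication I would dispose of at once: as in the discussion preceding the statement, $s_p^*(\omega|_U)=\omega|_U$ is equivalent --- via $s_p\circ\exp_p=\exp_p\circ(-\mathrm{id})$ --- to $(-\mathrm{id})^*\beta=\beta$, i.e. $\beta_{-v}=\beta_v$ for all $v$ in the (symmetric) domain; since $\dot\gamma(t)=d\exp_p|_{tX_p}(b_1)$ and $Z_i(t)=d\exp_p|_{tX_p}(tb_i)$ one has $h_{ij}(t)=t^2\beta_{tX_p}(b_i,b_j)$ and $h_{1i}(t)=t\,\beta_{tX_p}(b_1,b_i)$, so $\beta_{-v}=\beta_v$ makes $h_{ij}$ even and $h_{1i}$ odd, and an even (resp. odd) function has vanishing odd- (resp. even-) order derivatives at $0$.

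For the converse I would proceed in three steps. \emph{Analyticity.} Since $\nabla$ is analytic, its exponential map is analytic, and since $\omega$ is $\nabla$-parallel it is analytic too; hence $\beta$ is a real-analytic $2$-form on $\mathcal{O}:=V_p\cap(-V_p)$, a star-shaped symmetric neighbourhood of $0$ on which $\exp_p$ restricts to a diffeomorphism onto a symmetric normal neighbourhood $V\subseteq U$. In particular, for each $X_p\neq 0$ and each basis $(b_k)$ completing $b_1:=X_p$, the functions $h_{ij}$, $h_{1i}$ are real-analytic in $t$ near $0$, and therefore so are $t\mapsto\beta_{tX_p}(b_i,b_j)$ and $t\mapsto\beta_{tX_p}(b_1,b_i)$. \emph{Using the hypothesis.} A real-analytic function all of whose odd (resp. even) derivatives vanish at $0$ is even (resp. odd) on its interval of convergence; applied to the $h$'s this says that $t\mapsto\beta_{tX_p}(b_k,b_l)$ is an even function near $0$, for every pair of basis vectors. \emph{Globalization.} By the identity theorem for real-analytic functions on the connected interval $\{t:\pm tX_p\in\mathcal{O}\}$, the equality $\beta_{tX_p}(b_k,b_l)=\beta_{-tX_p}(b_k,b_l)$ then holds throughout that interval; since the $b_k$ form a basis and $\beta$ is bilinear in the fibre, $\beta_{tX_p}=\beta_{-tX_p}$, and evaluating at $t=1$ while letting $X_p$ range over $\mathcal{O}\setminus\{0\}$ (using the hypothesis once per direction, with any single adapted basis) gives $\beta_{-v}=\beta_v$ for all $v\in\mathcal{O}$. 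Finally, running the identification of the first paragraph backwards --- $(-\mathrm{id})^*\beta=\beta$ together with the fact that $\exp_p$ is a diffeomorphism onto $V$ --- yields $s_p^*(\omega|_V)=\omega|_V$, as required.

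The step I expect to be the genuine obstacle is the globalization: analyticity only produces evenness of the $h$'s on an interval about $0$ whose length might a priori shrink with $X_p$, and turning this into the fibrewise identity $\beta_{-v}=\beta_v$ on a fixed neighbourhood of $p$ is precisely what the real-analytic identity theorem buys us; one also has to notice the small bookkeeping point that a single adapted basis per direction suffices, so that the hypothesis ``for every $X_p$'' really is enough input. A secondary point one must not skip is that $\omega$ itself --- not merely $\nabla$ --- is analytic, which holds automatically because $\omega$ is $\nabla$-parallel.
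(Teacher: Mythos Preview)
Your proposal is correct and follows essentially the same approach as the paper, which treats the lemma as an immediate consequence of the sentence preceding it: in normal coordinates $s_p$ acts as $x\mapsto -x$, so $s_p^*\omega=\omega$ is equivalent to the $\omega_{ij}$ being even, hence (along the $x^1$--axis) to the $h_{ij}$ being even and the $h_{1i}$ odd. Your write-up is simply a more careful version of this, with the pullback $\beta=\exp_p^*\omega$ making the ``one adapted basis per direction'' bookkeeping transparent and with the analyticity and identity-theorem steps for the converse spelled out explicitly where the paper leaves them implicit.
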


As we saw, the odd derivatives of the $h_{ij}$ always vanish at 0 up to order 3 and the same goes for the even derivatives of the $h_{1i}$ up to order 2, so in view of equations \ref{eq:Derhij} and \ref{eq:Derh1i}, the conditions on the derivatives of $h_{ij}$ and $h_{1i}$ stated in lemma \ref{lemma:Derh} become
\begin{equation*}
    \begin{split}
        0=\left.\frac{d^{r+2}}{dt^{r+2}}h_{ij}(t)\right|_{t=0}=&(r+2)\omega_p(Q^r_0\partial_i|_p,\partial_j|_p)+(r+2)\omega_p(\partial_i|_p,Q^r_0\partial_j|_p)\\
    &+\sum_{q=2}^{r-4}\binom{r+2}{q+1}\omega_p(Q^{r-q}_0\partial_i|_p,Q^q_0\partial_j|_p)\\
        0=\left.\frac{d^{r+1}}{dt^{r+1}}h_{1i}(t)\right|_{t=0}=&\omega_p(\partial_1|_p,Q^r_0\partial_i|_p).
    \end{split}
\end{equation*}
for $r=3,5,7,...$, $2\leq i<j\leq 2n$ and any choice of $X_p\in T_pM-\{0\}$. The equations associated to the $h_{1i}$ can actually be put in the same form as the others. Indeed, the skew-symmetry of the curvature tensor yields the equation
\begin{equation*}
    \Pi X=R(X,X)X=0
\end{equation*}
to which we can apply $\nabla_X$ to obtain
\begin{equation*}
    (\nabla^l_X\Pi)X=0
\end{equation*}
for all $l\geq 0$, from which we deduce
\begin{equation*}
    Q^q_0\partial_1|_p=0
\end{equation*}
for all $q\geq 2$. Thus for $r=3,5,7,...$, $0=\left.\frac{d^{r+1}}{dt^{r+1}}h_{1i}(t)\right|_{t=0}$ is equivalent to 
\begin{equation*}
    0=(r+2)\omega_p(Q^r_0\partial_1|_p,\partial_i|_p)+(r+2)\omega_p(\partial_1|_p,Q^r_0\partial_i|_p)+\sum_{q=2}^{r-4}\binom{r+2}{q+1}\omega_p(Q^{r-q}_0\partial_1|_p,Q^q_0\partial_i|_p).
\end{equation*}
Finally, as $(\partial_i|_p)_{i=1}^{2n}$ forms a basis of $T_pM$ we arrive at the following result:
\begin{theorem}\label{theorem:Qr}
    For $r=3,5,7,...$ and $p\in M$ we'll say that the Fedosov manifold $(M,\omega,\nabla)$ satisfies the (Q$r$) condition at 
    $p$ if for any choice of $X_p\in T_pM-\{0\}$ we have
    \begin{equation}\label{eq:Qr}
        0=(r+2)\omega_p(Q^r_0-,-)+(r+2)\omega_p(-,Q^r_0-)+\sum_{q=2}^{r-4}\binom{r+2}{q+1}\omega_p(Q^{r-q}_0-,Q^q_0-).
    \end{equation}
    We'll say that $(M,\omega,\nabla)$ satisfies the condition (Q$r$) if it satisfies (Q$r$) at every point.

    If $\nabla$ is of type S, then $(M,\omega,\nabla)$ satisfies (Q$r$) for all $r=3,5,7,...$. Conversely if $\nabla$ is analytic and while $(M,\omega,\nabla)$ satisfies (Q$r$) for all $r=3,5,7,...$ then $\nabla$ is of type S.
\end{theorem}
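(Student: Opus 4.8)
The plan is to assemble the ingredients prepared in the preceding subsections; essentially no computation is needed beyond one or two reindexings. The logical backbone is Lemma~\ref{lemma:Derh}: $\nabla$ is of type S at $p$ if and only if, for every $X_p\in T_pM\setminus\{0\}$, all odd derivatives at $t=0$ of the functions $h_{ij}$ ($2\le i<j\le 2n$) and all even derivatives at $t=0$ of the functions $h_{1i}$ ($2\le i\le 2n$) vanish --- with the proviso that, in the ``if'' direction, Lemma~\ref{lemma:Derh} needs $\nabla$ analytic, since there one must pass from the vanishing of the odd (resp.\ even) Taylor coefficients at $0$ of $h_{ij}$ (resp.\ $h_{1i}$) to the actual evenness (resp.\ oddness) of these functions, hence to the genuine local identity $s_p^\ast\omega=\omega$. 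So the whole content of the theorem is that this system of parity conditions coincides with the family of identities (Q$r$), $r=3,5,7,\dots$, holding at $p$ for every $X_p$. To see this I would first substitute the closed formulas \ref{eq:Derhij} and \ref{eq:Derh1i} for the relevant derivatives; as already observed before the statement, the odd derivatives of $h_{ij}$ of order $\le 3$ and the even derivatives of $h_{1i}$ of order $\le 2$ vanish automatically (from the factors $t^2$, resp.\ $t$, in \ref{eq:hijDef}--\ref{eq:h1iDef} and from $Z_i(0)=\nabla^2_XZ_i(0)=0$), so the effective conditions are exactly those indexed by odd $r\ge3$, and for such $r$ equation \ref{eq:Derhij} exhibits $\tfrac{d^{r+2}}{dt^{r+2}}h_{ij}(0)$ as the value at $(\partial_i|_p,\partial_j|_p)$ of the bilinear form appearing in (Q$r$).

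The second step is to fold the $h_{1i}$-conditions into the $h_{ij}$-ones. From skew-symmetry of the curvature one has $\Pi X=R(X,X)X=0$; applying $\nabla_X$ repeatedly and using $\nabla_XX=0$ (since $\gamma$ is a geodesic) gives $(\nabla^\ell_X\Pi)X=0$ for all $\ell\ge 0$. Feeding this into the monomial expansion \ref{eq:CoeffQ} of $Q^r$, in which every monomial ends on the right with some $\nabla^{i_k}_X\Pi$, yields $Q^q_0\,\partial_1|_p=Q^q_0 X_p=0$ for all $q\ge 2$. Hence, when the bilinear form of (Q$r$) is evaluated on $(\partial_1|_p,\partial_i|_p)$, every term drops out except $(r+2)\,\omega_p(\partial_1|_p,Q^r_0\partial_i|_p)$, which is exactly $(r+2)$ times the $h_{1i}$ quantity of \ref{eq:Derh1i}. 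Therefore, for a fixed $X_p=\partial_1|_p$, the full system of parity conditions of Lemma~\ref{lemma:Derh} says precisely that the bilinear form of (Q$r$) vanishes on every ordered basis pair $(\partial_i|_p,\partial_j|_p)$, $1\le i<j\le 2n$.

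To conclude I would observe that the bilinear form in (Q$r$) is \emph{skew-symmetric} on $T_pM$: the part $\omega_p(Q^r_0\cdot,\cdot)+\omega_p(\cdot,Q^r_0\cdot)$ is skew because $\omega_p$ is, and the reindexing $q\mapsto r-q$ together with $\binom{r+2}{q+1}=\binom{r+2}{r+1-q}$ turns the remaining sum into minus itself. A skew bilinear form on $T_pM$ vanishes identically if and only if it vanishes on all pairs $(\partial_i|_p,\partial_j|_p)$ with $i<j$, so, letting $X_p$ range over $T_pM\setminus\{0\}$, the parity conditions of Lemma~\ref{lemma:Derh} hold at $p$ (for every $X_p$) if and only if $(M,\omega,\nabla)$ satisfies (Q$r$) at $p$ for every odd $r\ge3$. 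Combined with the two directions of Lemma~\ref{lemma:Derh} and with the arbitrariness of $p$, this gives the theorem: type S implies (Q$r$) for all odd $r$ using only the easy direction of that lemma, while the converse, when $\nabla$ is analytic, uses its analytic direction.

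I expect the genuinely delicate point to lie not in the algebra but in this last use of the analytic direction of Lemma~\ref{lemma:Derh}: it only yields a symmetric normal neighbourhood $V\subseteq U$ on which $s_p^\ast\omega=\omega$, and some care is needed to ensure that the domain where the geodesic symmetry is defined, the domain on which the radial expansions converge, and $V$ itself can be chosen compatibly --- this is precisely where analyticity is indispensable, a ``same Taylor series $\Rightarrow$ same function'' shortcut being unavailable in the merely smooth category. The remaining care-points are minor: checking that $r=3$ really is the first effective index for \emph{both} families $h_{ij}$ and $h_{1i}$, and that the reindexing establishing the skew-symmetry of the form in (Q$r$) matches the exact summation range occurring in \ref{eq:Derhij}.
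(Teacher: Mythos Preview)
Your proposal is correct and follows essentially the same route as the paper: the argument there is the material laid out between Lemma~\ref{lemma:Derh} and the theorem statement, namely substituting \eqref{eq:Derhij}--\eqref{eq:Derh1i} into the parity conditions of Lemma~\ref{lemma:Derh}, absorbing the $h_{1i}$ equations into the $h_{ij}$ ones via $Q^q_0\partial_1|_p=0$, and then invoking that $(\partial_i|_p)$ is a basis. Your explicit verification that the bilinear form in \eqref{eq:Qr} is skew-symmetric is a welcome addition --- the paper tacitly needs this to pass from ``vanishes on basis pairs with $i<j$'' to ``vanishes identically'', but does not spell it out.
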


The conditions (Q$r$) admit some alternative forms, which might be more convenient depending on the context. Firstly, for $r=3,5,7,...$ we can introduce the endomorphism field $P^r$ along $\gamma$
\begin{equation}
    P^r:=(r+2)Q^r+\sum_{q=2}^{(r-1)/2}\binom{r+2}{q+1}(Q^q)^\top\circ Q^{r-q}
\end{equation}
where $(-)^\top$ denotes the transpose with respect to the symplectic form, i.e. for $z\in M$ and $A_z\in\text{End}(T_zM)$, $A^\top_z$ is defined by
\begin{equation*}
    \omega_z(-,A_z-)=\omega(A^\top_z-,-).
\end{equation*}
Then 
\begin{equation*}
    \omega_p(P^r_0-,-)+\omega_p(-,P^r_0)=(r+2)\omega_p(Q^r_0-,-)+(r+2)\omega_p(-,Q^r_0-)+\sum_{q=2}^{r-4}\binom{r+2}{q+1}\omega_p(Q^{r-q}_0-,Q^q_0-)
\end{equation*}
so $(M,\omega,\nabla)$ satisfies (Q$r$) at $p$ if and only if for any choice of $X_p\in T_pM-\{0\}$, $P^r_0$ belongs to $\mathfrak{sp}(T_pM,\omega_p)$, which is equivalent to $(P^r_0)^\top=-P^r_0$.

Secondly, using $\nabla_XX=0$ and reasoning inductively from the definition $\Pi:=R(X,-)X$, we can reexpress $\nabla^l_X\Pi$ as follows:
\begin{proposition}\hfill
    \begin{enumerate}[(i)]
        \item For $l\geq 0$, $\nabla^l_X\Pi=(\nabla^l_XR)(X,-)X=(\nabla^lR)(X,-;X,...,X)X$.
        \item In particular for $l\geq 0$, $\nabla^l_X\Pi_0=(\nabla^lR)_p(X_p,-;X_p,...,X_p)X_p$.
    \end{enumerate}
\end{proposition}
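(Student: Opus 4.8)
The plan is to establish (i) by induction on $l$ and then read off (ii) by evaluation at $t=0$. The only ingredients are the Leibniz rule for the covariant derivative $\nabla_X$ along $\gamma$, the elementary identity $\nabla_X(S|_\gamma)=(\nabla S)(\cdots\,;\dot\gamma)|_\gamma$ valid for any tensor field $S$ on $M$, and the geodesic equation $\nabla_X X=\nabla_X\dot\gamma=0$; it is the last of these that makes all the unwanted terms disappear.

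For the base case $l=0$ the identity $\Pi=R(X,-)X=R(X,-\,;X)X$ is precisely the definition of $\Pi$. For the inductive step, fix a vector field $Y$ along $\gamma$ and expand $(\nabla^l_X\Pi)(Y)=\nabla_X\big((\nabla^{l-1}_X\Pi)(Y)\big)-(\nabla^{l-1}_X\Pi)(\nabla_X Y)$. Substituting the induction hypothesis $(\nabla^{l-1}_X\Pi)(Z)=(\nabla^{l-1}_X R)(X,Z)X$ — where $\nabla^{l-1}_X R$ is read as the iterated covariant derivative along $\gamma$ of the restriction of $R$ — and applying the Leibniz rule for the $(1,3)$-tensor field $\nabla^{l-1}_X R$ along $\gamma$ produces four terms: $(\nabla^l_X R)(X,Y)X$, the term $(\nabla^{l-1}_X R)(X,\nabla_X Y)X$, and two further terms each carrying a factor $\nabla_X X$. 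The two latter terms vanish by the geodesic equation, while $(\nabla^{l-1}_X R)(X,\nabla_X Y)X$ is exactly $(\nabla^{l-1}_X\Pi)(\nabla_X Y)$ and cancels; hence $\nabla^l_X\Pi=(\nabla^l_X R)(X,-)X$.

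It remains to identify $\nabla^l_X R$ (the iterated covariant derivative along $\gamma$ of $R|_\gamma$) with $(\nabla^l R)(-,-\,;X,\dots,X)|_\gamma$; this is a second, completely parallel induction in which differentiating $(\nabla^{l-1}R)(-,-\,;X,\dots,X)|_\gamma$ along $\gamma$ yields $(\nabla^l R)(-,-\,;X,\dots,X,X)|_\gamma$ together with $l-1$ terms in which $\nabla_X$ lands on one of the repeated $X$-slots, all of which are zero since $\nabla_X X=0$. The only point that deserves attention is that the last $l$ slots of $\nabla^l R$ are in general not symmetric; but here they are all filled with the same vector $X$, so their ordering is immaterial and the bookkeeping of which slot a given derivative hits causes no trouble — this is essentially the sole obstacle, and the rest is mechanical. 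Finally, (ii) is obtained from (i) by evaluating at $t=0$, where $\gamma(0)=p$ and $X(0)=X_p$.
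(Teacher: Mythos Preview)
Your proof is correct and follows exactly the route the paper indicates: the paper merely states that the result is obtained ``using $\nabla_XX=0$ and reasoning inductively from the definition $\Pi:=R(X,-)X$'' without spelling out the details, and your two nested inductions (one for the first equality, one for the second) with the Leibniz rule and the geodesic equation are precisely what is meant. There is nothing to add.
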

By injecting (ii) in equation \ref{eq:CoeffQ}, we can then obtain a version of condition (Q$r$) which does not require a choice of geodesic $\gamma$, but only involves the tangent vector $X_p\in T_pM$, the covariant derivatives of $R$ and the symplectic form $\omega$.

Notably, the first condition, (Q3), takes a particularly simple form:
\begin{theorem}
    The Fedosov manifold $(M,\omega,\nabla)$ satisfies (Q3) at a point $p\in M$ if and only if for all $X_p\in T_pM$, $(\nabla_{X_p}R)(X_p,-)X_p$ belongs to $\mathfrak{sp}(T_pM,\omega_p)$.
\end{theorem}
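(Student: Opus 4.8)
The plan is to specialise the general condition \eqref{eq:Qr} of Theorem~\ref{theorem:Qr} to the case $r=3$ and then to identify $Q^3_0$ explicitly using the preceding proposition.

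First I would observe that for $r=3$ the correction sum $\sum_{q=2}^{r-4}\binom{r+2}{q+1}\,\omega_p(Q^{r-q}_0-,Q^q_0-)$ appearing in \eqref{eq:Qr} runs from $q=2$ to $q=-1$ and is therefore empty. Hence the condition (Q3) at $p$ asserts precisely that, for every $X_p\in T_pM-\{0\}$,
\begin{equation*}
    5\,\omega_p(Q^3_0-,-)+5\,\omega_p(-,Q^3_0-)=0,
\end{equation*}
i.e. that $Q^3_0\in\mathfrak{sp}(T_pM,\omega_p)$ (equivalently $(Q^3_0)^\top=-Q^3_0$), exactly as in the discussion following Theorem~\ref{theorem:Qr}.

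Next I would compute $Q^3$ from the recurrence \eqref{eq:RecQ}: for $r=3$ the sum there is again empty, so $Q^3=2\,\nabla_X\Pi$, and hence $Q^3_0=2\,\nabla_X\Pi_0$. Applying part (ii) of the preceding proposition gives $\nabla_X\Pi_0=(\nabla_{X_p}R)(X_p,-)X_p$, so that $Q^3_0=2\,(\nabla_{X_p}R)(X_p,-)X_p$.

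It then remains to assemble the pieces. Since $\mathfrak{sp}(T_pM,\omega_p)$ is a linear subspace of the space of endomorphisms of $T_pM$, the endomorphism $2\,(\nabla_{X_p}R)(X_p,-)X_p$ lies in it if and only if $(\nabla_{X_p}R)(X_p,-)X_p$ does; moreover for $X_p=0$ the latter is the zero endomorphism, which is symplectic, so adjoining this value to the range of $X_p$ is harmless. Therefore (Q3) holds at $p$ if and only if $(\nabla_{X_p}R)(X_p,-)X_p\in\mathfrak{sp}(T_pM,\omega_p)$ for all $X_p\in T_pM$, as claimed. There is essentially no obstacle here: the only bookkeeping to be careful about is that both the sum in \eqref{eq:Qr} and the sum in \eqref{eq:RecQ} degenerate to empty sums when $r=3$, and that the factor $2$ is absorbed by linearity of $\mathfrak{sp}$.
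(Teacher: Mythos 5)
Your proposal is correct and follows exactly the route the paper intends: the sum in condition (Q$r$) is empty for $r=3$, the recurrence gives $Q^3=2\nabla_X\Pi$, and part (ii) of the preceding proposition identifies $\nabla_X\Pi_0$ with $(\nabla_{X_p}R)(X_p,-)X_p$, with the factor $2$ and the case $X_p=0$ absorbed harmlessly. The paper leaves this verification implicit, and your write-up supplies precisely the missing bookkeeping.
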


As a corollary, we see that a connection of type S is necessarily preferred
\begin{corollary}
    If a symplectic connection $\nabla$ is of type $S$, then $\nabla$ is preferred (i.e. has cyclic parallel Ricci curvature).
\end{corollary}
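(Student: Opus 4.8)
The plan is to read the corollary off from the characterisation of condition (Q3) together with a one-line trace computation. By \cref{theorem:Qr} an S-type connection satisfies (Q3) at every point $p$, and by the characterisation of (Q3) proved just above this means precisely that, for every $X_p\in T_pM$, the endomorphism
\[
N_{X_p}\;:=\;(\nabla_{X_p}R)(X_p,-)X_p
\]
belongs to $\mathfrak{sp}(T_pM,\omega_p)$. Since every element of a symplectic Lie algebra is traceless, this gives $\operatorname{tr}N_{X_p}=0$ for all $X_p$ and all $p$.

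Next I would recognise this trace as a diagonal value of $\nabla\rho$, where $\rho$ is the Ricci tensor of the Fedosov manifold (symmetric for a torsionfree symplectic connection, see the cited references). Normalising $\rho$ as the contraction $\rho(A,B)=\operatorname{tr}\bigl(Z\mapsto R(A,Z)B\bigr)$ and using that covariant differentiation commutes with contraction, one has $(\nabla_X\rho)(A,B)=\operatorname{tr}\bigl(Z\mapsto(\nabla_XR)(A,Z)B\bigr)$; specialising $A=B=X$ yields exactly $(\nabla_X\rho)(X,X)=\operatorname{tr}N_X$ (with the other placement of the contracted slot one only picks up a sign, via the antisymmetry of $R$, hence of $\nabla_XR$, in its first two arguments). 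Combining with the previous step, $(\nabla_X\rho)(X,X)=0$ for every tangent vector $X$, at every point.

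It remains to polarise. The map $X\mapsto(\nabla_X\rho)(X,X)$ is the restriction to the diagonal of the trilinear form $(A,B,C)\mapsto(\nabla_A\rho)(B,C)$; its identical vanishing forces the totally symmetric part of that form to vanish, i.e.\ $\sum_{\sigma\in S_3}(\nabla_{\sigma(1)}\rho)(\sigma(2),\sigma(3))=0$ for all $X,Y,Z$. Since $\rho$ is symmetric in its two arguments, the six summands collapse in pairs to $2\bigl[(\nabla_X\rho)(Y,Z)+(\nabla_Y\rho)(Z,X)+(\nabla_Z\rho)(X,Y)\bigr]$, so $\nabla\rho$ has vanishing cyclic sum; that is, $\nabla$ is preferred.

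The argument is short and its two substantial inputs---\cref{theorem:Qr} and the simple form of (Q3)---are already established; the remaining ingredients (tracelessness of $\mathfrak{sp}$, commutation of $\nabla$ with contraction, the elementary polarisation identity, and the symmetry of the Fedosov Ricci tensor) are standard. The only point calling for a little care is the middle step: fixing which contraction of the curvature one declares to be the Ricci tensor and keeping track of the attendant sign.
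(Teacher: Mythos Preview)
Your proof is correct and follows essentially the same route as the paper's own argument: invoke (Q3) via \cref{theorem:Qr}, use that $\mathfrak{sp}$ is traceless to get $(\nabla_Xr)(X,X)=0$, and identify this with the preferred condition. The only difference is that you spell out the polarisation from $(\nabla_Xr)(X,X)=0$ to the cyclic identity, whereas the paper takes that equivalence (standard from \cite{BourgeoisCahen:1998}) as understood and stops at the one-line trace computation.
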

\begin{proof}
    Elements of the Lie algebra of the symplectic group have vanishing trace, so for any vector field $X\in\mathfrak{X}(M)$,
    \begin{equation*}
        0=\text{tr}(\nabla_XR)(X,-)X=(\nabla_Xr)(X,X).
    \end{equation*}
\end{proof}

\section{The Sekigawa-Vanhecke theorem and preferred connections that are not of type S}

The original impetus behind our work came from a theorem of Sekigawa and Vanhecke, on almost Hermitian manifolds. Relying on this theorem, we'll also be able to show that the inclusion "S-type" $\subset$ "preferred" derived at the end of the last section is a strict one.

\begin{theorem}[Sekigawa-Vanhecke, 1986 \cite{SkVh:1986}]\label{thm:SkVh}
    Let $(M,g,J)$ be an almost hermitian manifold whose local geodesic symmetries preserve the Kähler form $\omega=g(-,J-)$. Then $(M,J,g)$ is a locally symmetric hermitian space. In particular $(M,J,g)$ is Kähler with a canonical symplectic Kähler form $\omega=g(-,J-)$, which is parallel for its Levi-Civita connection.
\end{theorem}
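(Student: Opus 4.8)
The plan is to reduce the statement to the Kähler case and then to feed the geodesic-symmetry hypothesis into the recursive curvature conditions of Section~\ref{sect:RecCdt}. For the reduction: the hypothesis says precisely that $\omega$ is symmetric, in the sense of Section~2, for the Levi-Civita connection $\nabla$ of $g$; since $\nabla$ is torsionless, the proposition of Section~2 gives $\nabla\omega=0$. Combined with $\nabla g=0$ and $\omega(X,Y)=g(X,JY)$, a one-line computation yields $(\nabla_Z J)Y=0$ for all $Y,Z$, i.e.\ $\nabla J=0$; a parallel almost complex structure has vanishing Nijenhuis tensor, hence is integrable, so $(M,g,J)$ is Kähler and $\omega$ is parallel. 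In particular $(M,\omega,\nabla)$ is now a Fedosov manifold and $\nabla$ is a symplectic connection of type S, so all of Section~\ref{sect:RecCdt} applies; it remains to prove that a Kähler manifold of type S is locally symmetric.

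\emph{Exploiting the recursive conditions.} By Theorem~\ref{theorem:Qr}, $(M,\omega,\nabla)$ satisfies (Q$r$) for every odd $r\geq 3$. On a Kähler manifold the curvature satisfies $R(JX,JY)=R(X,Y)$ and $R(X,Y)\circ J=J\circ R(X,Y)$, and the same identities hold for each covariant derivative $\nabla^k R$; using them together with the $g$-skew-symmetry of $J$, the condition ``$A\in\mathfrak{sp}(\omega)$'' becomes, for a $g$-symmetric endomorphism $A$, the condition ``$AJ=-JA$''. Thus (Q3), which by the characterisation of (Q3) in Section~\ref{sect:RecCdt} says $(\nabla_{X_p}R)(X_p,-)X_p\in\mathfrak{sp}(\omega_p)$, becomes the statement that this (automatically $g$-symmetric) endomorphism anticommutes with $J$ for every $X_p$ — already a strong constraint on $\nabla R$ — and the higher (Q$r$) constrain the higher covariant derivatives similarly. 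Passing to holomorphic normal coordinates, where the curvature tensor is governed by its mixed components $R_{i\bar jk\bar l}$ and the Kähler structure is very rigid, one deduces that all such components of $\nabla R$, hence $\nabla R$ itself, must vanish.

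This last implication is the technical heart, and the step I expect to be the main obstacle: over a general Fedosov manifold the pointwise conditions (Q$r$) do not force $\nabla R=0$ (preferred connections need not be of type S, as Section~4 shows), so one must genuinely exploit that on a Kähler manifold $\nabla R$ lies in a much smaller $\mathrm{U}(m)$-representation than a generic tensor with the same symmetries — this is exactly the computation carried out by Sekigawa and Vanhecke in \cite{SkVh:1986}, which we invoke here. Once $\nabla R=0$, $(M,g)$ is locally symmetric; being Kähler, it is a locally Hermitian symmetric space, and $\omega=g(-,J-)$ is parallel for the Levi-Civita connection, as claimed.
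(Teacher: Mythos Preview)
The paper does not give its own proof of this theorem: it is stated as a result of Sekigawa and Vanhecke with a citation to \cite{SkVh:1986}, and then used as a black box. So there is no ``paper's proof'' to compare your proposal against.

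As for your proposal itself, it is not a self-contained proof. The reduction step is fine: from $s_p^*\omega=\omega$ and the proposition in Section~2 you correctly get $\nabla\omega=0$, hence $\nabla J=0$, hence $(M,g,J)$ is K\"ahler. But the remaining step --- showing that an S-type K\"ahler manifold has $\nabla R=0$ --- is precisely the content of the Sekigawa--Vanhecke theorem, and you do not prove it: you translate (Q3) into the constraint that the $g$-symmetric endomorphism $(\nabla_{X}R)(X,-)X$ anticommutes with $J$, observe that this is ``a strong constraint'', and then explicitly write that the deduction of $\nabla R=0$ ``is exactly the computation carried out by Sekigawa and Vanhecke in \cite{SkVh:1986}, which we invoke here''. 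That is not a proof of the theorem; it is a citation of the theorem, which is exactly what the paper does. The idea of routing the argument through the (Q$r$) machinery of Section~\ref{sect:RecCdt} is natural, but you have not shown that the (Q$r$) conditions, together with the K\"ahler identities, force $\nabla R=0$; your sketch (``passing to holomorphic normal coordinates \ldots\ one deduces'') contains no actual argument. If you want a genuine alternative proof, that gap is the whole problem.
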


In the book of Besse \cite{Besse:Einstein} (theorem 14.98, chapter 14, page 420), the reader will find a list (concocted by D.V. Alekseevskii)  of Quaternion-Kähler manifolds which are not locally symmetric. Such a manifold is always Kähler-Einstein, thus its Levi-Civita connection is necessarily Ricci-parallel, and in particular preferred (as a symplectic connection for the Kähler form). 

On the other hand, in view of the Sekigawa-Vanhecke theorem, the Levi-Civita connection of a non-locally-symmetric Kähler manifold cannot be of type S. And so we see that this list provides examples of preferred symplectic connections which are not S-type.

\begin{corollary}
    There exist preferred connections which are not of type S.
\end{corollary}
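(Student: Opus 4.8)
The plan is to exhibit one explicit symplectic connection that is preferred but not of type S, and the most economical source is the Levi--Civita connection of a Kähler manifold $(M,g,J)$ that is Einstein but not locally symmetric. For such an $M$ the Levi--Civita connection $\nabla$ is torsionfree and satisfies $\nabla J=0$ and $\nabla g=0$, hence $\nabla\omega=0$ for the Kähler form $\omega=g(-,J-)$; thus $(M,\omega,\nabla)$ is a Fedosov manifold, and both the S-type characterization of \Cref{sect:RecCdt} and the Sekigawa--Vanhecke theorem (\Cref{thm:SkVh}) apply to it.

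First I would verify that $\nabla$ is preferred. Because $M$ is Einstein, its Ricci tensor is $r=\lambda g$ for a constant $\lambda$, so $\nabla r=\lambda\nabla g=0$; a fortiori $(\nabla_X r)(X,X)=0$ for every vector field $X$, which is precisely the condition that $\nabla$ be preferred in the sense of Bourgeois--Cahen \cite{BourgeoisCahen:1998}. That $\nabla$ is \emph{not} of type S is then the contrapositive of the Sekigawa--Vanhecke theorem: were the local geodesic symmetries of $(M,g,J)$ to preserve $\omega$, the manifold would be a locally symmetric Hermitian space, against our choice.

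It remains only to name a non-locally-symmetric Kähler--Einstein manifold. As recalled above, the Quaternion-Kähler manifolds in Alekseevskii's list (Besse \cite{Besse:Einstein}, theorem 14.98) are Kähler--Einstein and not locally symmetric, and any one of them does the job; a self-contained alternative is any compact Calabi--Yau manifold, for instance a K3 surface carrying a hyperkähler metric, or a smooth quintic threefold in $\mathbb{CP}^4$ --- these are Ricci-flat Kähler, hence preferred, and their curvature tensors are not parallel, hence they are not locally symmetric. The only step that genuinely relies on outside input is this certification of non-symmetry; everything else is forced by the definitions and by \Cref{thm:SkVh}, so the main (and only real) obstacle is pinning down a reference whose examples are unambiguously not locally symmetric.
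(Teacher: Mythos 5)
Your proposal is correct and follows essentially the same route as the paper: verify that a Kähler--Einstein metric has parallel (hence cyclic-parallel) Ricci tensor, so its Levi--Civita connection is preferred for the Kähler form, and then invoke the Sekigawa--Vanhecke theorem (\Cref{thm:SkVh}) to rule out the S-type property for any non-locally-symmetric example, citing the same Alekseevskii list from Besse. Your additional Ricci-flat examples (K3 surfaces, quintic threefolds) are a harmless and valid supplement.
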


\section{Ricci-type connections}

\subsection{Recalls and preliminaries}

Locally symmetric symplectic spaces provide an obvious class of S-type symplectic connections. In this section we'll show that Ricci-type connections constitute another, less obvious class of S-type connections. In particular this will allow us to exhibit many examples of non-locally-symmetric S-type connections. 

To motivate the definition of a symplectic connection of Ricci-type, we'll begin by presenting the symmetries exhibited by the curvature tensor of a symplectic connection in a new light. Let $(M,\omega,\nabla)$ be a Fedosov manifold and choose a point $x\in M$. That the covariant curvature tensor $\underline{R}_x$ is skew-symmetric in its first two arguments and symmetric in its last two arguments can be expressed concisely by saying that $\underline{R}_x$ is an element of $\Lambda^2 T^*_xM\otimes S^2T^*_xM$, where $S^2T^*_xM$ denotes the symmetrization of $T^*_xM\otimes T^*_xM$. The fact that $\underline{R}_x$ satisfies the first Bianchi identity $\mathfrak{S}_{X,Y,Z}\underline{R}_x(X,Y,Z,W)=0$ is equivalent to $\underline{R}_x\in \mathcal{R}_x:=\ker a$, where $a$ is the skewsymmetrization map
\begin{equation*}
    \begin{split}
        a:\Lambda^pT^*_xM\otimes S^qT^*_xM&\to\Lambda^{p+1}T^*_xM\otimes S^{q-1}T^*_xM\\
        a(v^1\wedge...\wedge v^p\otimes w^1...w^q)&:=\sum_{i=1}^qv^1\wedge...\wedge v^p\wedge w^i\otimes w^1...\hat{w}^i...w^q.
    \end{split}
\end{equation*}
\begin{remark}
    For $q\geq 0$, the skewsymmetrization maps assemble into the Koszul long exact sequence
    \begin{equation*}
        0\longrightarrow S^qV\overset{a}{\longrightarrow}S^{q-1}\otimes V\overset{a}{\longrightarrow}S^{q-2}\otimes \Lambda^2V\overset{a}{\longrightarrow}...\overset{a}{\longrightarrow}\Lambda^qV\longrightarrow 0
    \end{equation*}
    for any finite-dimensional vector space $V$. One can also obtain a long exact sequence going in the opposite direction using symmetrization maps (see \cite{BieliavskyEtAl:2005}).
\end{remark}
Thus $\mathcal{R}_x$ is the space of 4-tensors satisfying the algebraic identities of a symplectic curvature tensor. The symplectic group $\text{Sp}(T_xM,\omega_x)$ acts on $\mathcal{R}_x$, which splits into two irreducible components when $\dim M=2n\geq 4$ so that
\begin{equation*}
    \mathcal{R}_x=\mathcal{E}_x\oplus\mathcal{W}_x.
\end{equation*}
If we decompose the covariant symplectic curvature tensor $\underline{R}_x$ into its $\mathcal{E}_x$ and $\mathcal{W}_x$ components (see \cite{Vaisman:1985}):
\begin{equation*}
    \underline{R}_x=\underline{E}_x+\underline{W}_x
\end{equation*}
then $\underline{E}_x$ is given by
\begin{equation*}
\begin{split}
    \underline{E}_x(X,Y,Z,T)=&-\frac 1{2(n+1)}\left[2\omega_x(X,Y)r_x(Z,T)+\omega_x(X,Z)r_x(Y,T)\right.\\
    &+\left.\omega_x(X,T)r(Y,Z)-\omega_x(Y,Z)r_x(X,T)-\omega_x(Y,T)r_x(X,Z)\right].
\end{split}
\end{equation*}
Correspondingly, the $(1,3)$ curvature tensor can be written $R_x=E_x+W_x$ with
\begin{equation}\label{eq:RicciTypeCurv}
\begin{split}
    E(X,Y)Z=&\frac 1{2(n+1)}\left[2\omega(X,Y)\rho Z+\omega(X,Z)\rho Y-\omega(Y,Z)\rho X\right.\\
    &\left.+\omega(X,\rho Z)Y-\omega(Y,\rho Z)X\right]
\end{split}
\end{equation}

\begin{definition}
    Let $(M,\omega,\nabla)$ be a Fedosov manifold of dimension $2n\geq 4$. We say that $\nabla$ is of \textbf{Ricci-type} if $W=0$, or equivalently if $R=E$.
\end{definition}
\begin{remark}
    The 2-dimensional case is peculiar (for example, the curvature of a symplectic connection over a symplectic surface is \emph{always} determined by its Ricci curvature) and is treated separately in a later section.
\end{remark}

One striking property of Ricci-type connections is the existence of a vector field $U$ and a real number $K$ which, when combined with $\rho$, essentially control the local geometry (see \cite{Cahen:RicciType}):
\begin{lemma}\label{lemma:PropRicciType}
    Let $(M,\omega,\nabla)$ be a Fedosov manifold of dimension $2n\geq 4$ with $\nabla$ of Ricci-type. For a vector field $X$ on $M$, we write $\underline{X}:=\iota_X\omega=\omega(X,-)$. Then
    \begin{enumerate}[(i)]
        \item There exists a vector field $U$ such that
        \begin{equation*}
            \nabla_X\rho=-\frac 1{2n+1}(X\otimes\underline{U}+U\otimes\underline{X});
        \end{equation*}
        \item There exists a function $f$ such that
        \begin{equation*}
            \nabla_XU=-\frac{2n+1}{2(n+1)}(\rho^2)X+fX;
        \end{equation*}
        \item There exists a real number $K$ such that
        \begin{equation*}
            \text{tr}(\rho^2)+\frac{4(n+1)}{2n+1}f=K.
        \end{equation*}
    \end{enumerate}
\end{lemma}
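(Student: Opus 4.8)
\noindent The plan is to derive all three assertions from the second Bianchi identity $\mathfrak{S}_{X,Y,Z}(\nabla_X R)(Y,Z)=0$, combined with the defining relation $R=E$ of \ref{eq:RicciTypeCurv} and with the standard commutation identity $\nabla^2_{X,Y}T-\nabla^2_{Y,X}T=R(X,Y)\cdot T$ for second covariant derivatives of tensor fields (here $\nabla^2_{X,Y}:=\nabla_X\nabla_Y-\nabla_{\nabla_X Y}$). Two elementary facts are used throughout. First, $\nabla\omega=0$ together with the symmetry of $\underline R_x$ in its last two arguments forces the Ricci tensor to be symmetric, which in the present conventions means $\rho^\top=-\rho$, i.e. $\rho_x\in\mathfrak{sp}(T_xM,\omega_x)$; consequently $(\nabla_X\rho)_x\in\mathfrak{sp}(T_xM,\omega_x)$ for every $X$ as well. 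Second, $\nabla\omega=0$ makes covariant differentiation commute with the map $V\mapsto\underline V$, so $\nabla_X\underline V=\underline{\nabla_X V}$ for every vector field $V$.

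\noindent For (i), I would apply $\nabla_X$ to \ref{eq:RicciTypeCurv}: since $\nabla\omega=0$, this simply replaces, in each of the five terms, the factor $\rho$ by $\nabla_X\rho$. Feeding the result into $\mathfrak{S}_{X,Y,Z}(\nabla_X R)(Y,Z)W=0$ produces a purely algebraic identity, linear in $\nabla\rho$, asserting the vanishing of a cyclic sum in $X,Y,Z$ of $\operatorname{End}(T_xM)$-valued expressions. Contracting this identity over one index (and, if needed, a second) against a symplectic frame, combining the contracted identities, and using non-degeneracy of $\omega$, forces the element $\nabla_X\rho\in\mathfrak{sp}(T_xM,\omega_x)$ to be of rank at most two and of the announced form. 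One then defines $U$ intrinsically as a fixed contraction of $\nabla\rho$, so that it is a genuine vector field, and matching this contraction with the displayed form fixes the constant $-1/(2n+1)$. This is the step that uses the hypothesis $2n\geq 4$, the surface case being genuinely different, cf. the remark following the definition of Ricci-type.

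\noindent For (ii), I would apply the commutation identity to the $(1,1)$-tensor $\rho$, giving $\nabla^2_{X,Y}\rho-\nabla^2_{Y,X}\rho=[R(X,Y),\rho]$ with $[\cdot,\cdot]$ the commutator of endomorphisms. Substituting the formula of (i) into the left-hand side and using $\nabla_X\underline V=\underline{\nabla_X V}$, all terms involving $\nabla_X Y$ cancel and one gets $\nabla^2_{X,Y}\rho=-\tfrac{1}{2n+1}\bigl(Y\otimes\underline{\nabla_X U}+\nabla_X U\otimes\underline Y\bigr)$. On the right-hand side, $R=E$ lets one write $E(X,Y)=\tfrac{1}{2(n+1)}\bigl(2\omega(X,Y)\rho+\text{rank-one endomorphisms built from }X,Y,\rho X,\rho Y\bigr)$; then, using $\rho\in\mathfrak{sp}$, the commutator $[E(X,Y),\rho]$ expands into an explicit combination of rank-one endomorphisms built from $X,Y,\rho X,\rho Y,\rho^2 X,\rho^2 Y$. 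Identifying the components proportional to $(\,\cdot\,)\otimes\underline Y$ on both sides and using non-degeneracy of $\omega$, one solves for $\nabla_X U$ and finds it equal to $-\tfrac{2n+1}{2(n+1)}\rho^2 X$ plus a term proportional to $X$; since this holds for every $X$, that coefficient is a single smooth function $f$, which is (ii). The main obstacle lies precisely here: $[E(X,Y),\rho]$ unfolds into about a dozen rank-one terms, and besides tracking the constants $2(n+1)$ and $2n+1$ one has to check that the part of $\nabla_X U$ not proportional to $\rho^2 X$ is proportional to $X$ with an $X$-independent coefficient, which is what upgrades $f$ from a tensor to a function.

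\noindent For (iii), one computes $d\operatorname{tr}(\rho^2)$ and $df$ separately and checks they cancel. On one hand $X(\operatorname{tr}\rho^2)=2\operatorname{tr}(\rho\,\nabla_X\rho)$, and inserting (i) together with $\rho\in\mathfrak{sp}$ gives $X(\operatorname{tr}\rho^2)=-\tfrac{4}{2n+1}\,\omega(U,\rho X)$. On the other hand, applying the commutation identity once more, now to the vector field $U$, $\nabla^2_{X,Y}U-\nabla^2_{Y,X}U=E(X,Y)U$, substituting (ii) on the left with $\nabla_X(\rho^2)=(\nabla_X\rho)\rho+\rho(\nabla_X\rho)$ expanded via (i), substituting $R=E$ on the right, and contracting over one index to isolate $df$, one obtains $\tfrac{4(n+1)}{2n+1}\,df(X)=\tfrac{4}{2n+1}\,\omega(U,\rho X)$. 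Adding the two yields $d\bigl(\operatorname{tr}(\rho^2)+\tfrac{4(n+1)}{2n+1}f\bigr)=0$, so the bracket is locally constant, equal to a real number $K$ on each connected component of $M$.
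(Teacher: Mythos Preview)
The paper does not supply a proof of this lemma: it is quoted from \cite{Cahen:RicciType} (see the parenthetical ``see \cite{Cahen:RicciType}'' preceding the statement), so there is no in-paper argument to compare against. Your outline is the standard one used in that reference and is correct in structure: (i) comes from the second Bianchi identity applied to $R=E$ after one contraction, (ii) from the Ricci identity $\nabla^2_{X,Y}\rho-\nabla^2_{Y,X}\rho=[E(X,Y),\rho]$ with (i) substituted on the left, and (iii) from differentiating $\operatorname{tr}(\rho^2)$ via (i) and comparing with the expression for $df$ extracted from the Ricci identity for $U$. Your intermediate computation $X(\operatorname{tr}\rho^2)=-\tfrac{4}{2n+1}\omega(U,\rho X)$ checks out. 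The only place your sketch is genuinely thin is the bookkeeping in (ii): you should verify explicitly that after expanding $[E(X,Y),\rho]$ the terms not of the form $(\,\cdot\,)\otimes\underline Y$ or $Y\otimes\underline{(\,\cdot\,)}$ cancel against the corresponding $X$-terms on the left, and that the residual scalar in front of $X$ is indeed independent of $X$; this is routine but is where sign and constant errors typically occur.
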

\begin{corollary}\label{cor:RicciTypePreferred}
    Any Ricci-type connection is preferred (once again, this holds in dimension $\geq 4$).
\end{corollary}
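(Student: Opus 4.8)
The plan is to reduce the statement to the pointwise identity $(\nabla_X r)(X,X)=0$ for all vector fields $X$. Since $(\nabla_X r)(Y,Z)$ is symmetric in $Y$ and $Z$, this identity is equivalent, by polarization, to the cyclic parallelism $\mathfrak{S}_{X,Y,Z}(\nabla_X r)(Y,Z)=0$ of the Ricci tensor, i.e.\ to $\nabla$ being preferred; this is precisely the reformulation already used in the proof of the corollary ``type~S $\Rightarrow$ preferred'' at the end of Section~\ref{sect:RecCdt}.

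So the proof amounts to a short computation resting on Lemma~\ref{lemma:PropRicciType}(i). First I would pass from the Ricci tensor $r$ to the Ricci endomorphism $\rho$: since $\nabla\omega=0$, covariantly differentiating the $\omega$-contraction relating $r$ to $\rho$ yields $(\nabla_X r)(X,X)=\pm\,\omega(X,(\nabla_X\rho)X)$, the sign being an irrelevant artefact of the chosen normalization. Then I would substitute the formula of Lemma~\ref{lemma:PropRicciType}(i), $\nabla_X\rho=-\tfrac1{2n+1}\bigl(X\otimes\underline U+U\otimes\underline X\bigr)$, and evaluate it on $X$, obtaining $(\nabla_X\rho)X=-\tfrac1{2n+1}\bigl(\omega(U,X)\,X+\omega(X,X)\,U\bigr)$.

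The decisive point is then immediate: $\omega(X,X)=0$, so $(\nabla_X\rho)X$ is a scalar multiple of $X$, hence $\omega(X,(\nabla_X\rho)X)=0$ because $\omega$ is alternating; therefore $(\nabla_X r)(X,X)=0$ and $\nabla$ is preferred. I do not expect any genuine obstacle here — the whole argument is two lines once Lemma~\ref{lemma:PropRicciType}(i) is available. The only minor care needed is the sign/normalization bookkeeping in the passage between $r$ and $\rho$, and spelling out the (elementary) polarization equivalence invoked in the first step.
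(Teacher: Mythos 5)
Your proof is correct and follows essentially the same route the paper takes: the paper (implicitly, via equations \ref{eq:Drho} and \ref{eq:Dr} in the proof of Theorem \ref{thm:RicciTypeQrCdts}) also evaluates Lemma \ref{lemma:PropRicciType}(i) on $X$ to get $(\nabla_X\rho)X=\tfrac{1}{2n+1}\omega(X,U)X$ and concludes $(\nabla_Xr)(X,X)=\omega(X,(\nabla_X\rho)X)=0$, which is equivalent to cyclic parallelism of $r$ by polarization. No gaps.
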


Taking a moment to collect the different facts concerning Ricci-type connections we've mentioned so far, we see that given a Ricci-type connection $\nabla$, its curvature $R$ is determined by $\rho$ (i.e. by its Ricci curvature); thus its covariant derivative $\nabla R$ is determined by $\nabla\rho$, which is itself determined by the vector field $U$ ((i) in lemma \ref{lemma:PropRicciType}). Consequently, $\nabla^2 R$ is determined by $\nabla U$, which is in turn determined by $\rho$ and $f$ ((ii) in \ref{lemma:PropRicciType}). As $f$ itself is determined by the real number $K$ and $\rho$ ((iii) in lemma \ref{lemma:PropRicciType}), we see that all covariant derivatives of $R$ can be expressed in terms of $U$, $\rho$ and $K$. 

If to this we add the fact that a Ricci-type connection is always analytic (see \cite{BieliavskyEtAl:2005}).
\begin{proposition}
    Any Ricci-type connection is analytic.
\end{proposition}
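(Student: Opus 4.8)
The plan is to reduce analyticity to a Cauchy-type bound on the iterated covariant derivatives of the curvature at each point, and then to invoke the classical fact that a torsionfree connection whose curvature satisfies such bounds is real-analytic in normal coordinates.

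\textbf{Step 1: the curvature is generated by finitely much data.} By the Ricci-type identity \ref{eq:RicciTypeCurv}, $R$ is the image of $\rho$ under a fixed bilinear operation assembled only from $\rho$, the identity endomorphism and $\omega$; since $\nabla\omega=0$ this operation has parallel coefficients, so $(\nabla^kR)_p$ is a universal bilinear expression in the $(\nabla^j\rho)_p$. It therefore suffices to control $(\nabla^k\rho)_p$. Write $T_k:=\nabla^k\rho$ and $S_k:=\nabla^kU$. Lemma \ref{lemma:PropRicciType}(i) says that $T_1$ is a fixed parallel-coefficient linear function of $S_0=U$, hence that $T_{k+1}=\nabla^k(\nabla\rho)$ is a fixed linear function of $S_k$ for every $k\ge 0$. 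Lemma \ref{lemma:PropRicciType}(ii)--(iii) say that $S_1=\nabla U$ is a fixed quadratic polynomial in $\rho$ (the constant $K$ entering, through $f$, only at differentiation order $0$), so by the Leibniz rule $S_{k+1}$ is a fixed expression of the form $\sum_{j=0}^k\binom kj(\text{bilinear in }T_j,T_{k-j})$. Iterating these two recursions, every $(\nabla^kR)_p$ is a universal polynomial in $\rho_p$, $U_p$, $K$ and $\omega_p$.

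\textbf{Step 2: the estimate.} Feed the two recursions into the standard majorant argument: one proves by induction that there are constants $C,\rho_0>0$ --- depending on $p$, on bounds for $\|\rho\|$ and $\|U\|$ on a fixed neighborhood of $p$, and on $|K|$ --- such that $\|T_k\|,\|S_k\|\le C\,k!\,\rho_0^{-k}$, and hence $\|(\nabla^kR)_p\|\le C'\,k!\,\rho_0^{-k}$. The only arithmetic input is the identity $\sum_{j=0}^k\binom kj\, j!\,(k-j)!=(k+1)!$, which lets the quadratic recursion for $S_k$ close once $\rho_0$ is taken small enough; the linear step $T_{k+1}\leftarrow S_k$ then closes for free. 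Finally, in a normal coordinate chart around $p$ the components of $\omega$ and the Christoffel symbols of $\nabla$ are given by power series whose Taylor coefficients are universal polynomials in the $(\nabla^kR)_p$ (this is the normal-coordinate/Jacobi-field machinery of Section \ref{sect:RecCdt} carried one step further; see also \cite{BieliavskyEtAl:2005}); the bound above makes these series converge, so $\nabla$ is analytic at $p$, and since $p$ was arbitrary, $\nabla$ is analytic.

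\textbf{Main obstacle.} The conceptual content is light; the real work is Step 2, i.e.\ keeping the combinatorics of the iterated Leibniz expansions under control so that the number of terms times their coefficients stays below $C^k k!$. One must also check carefully that the ``fixed linear/quadratic operations'' of Step 1 genuinely have parallel coefficients --- which holds precisely because $\omega$, the identity endomorphism and the scalar $K$ are all parallel --- so that no spurious terms are produced by $\nabla^k$; granting this, the bound is the routine computation indicated above. If one prefers to bypass the normal-coordinate Taylor machinery, an alternative is to observe that along any geodesic $\gamma$ the pair $(\rho\circ\gamma,\,U\circ\gamma)$ solves an autonomous polynomial ODE (read off from Lemma \ref{lemma:PropRicciType}), hence is analytic in $t$ with analytic dependence on $\dot\gamma(0)$; then $\Pi=R(\dot\gamma,-)\dot\gamma$ is analytic along geodesics, the Jacobi equation \ref{eq:Jacobi} has analytic solutions, $\exp_p$ is analytic, and analyticity of $\nabla$ follows.
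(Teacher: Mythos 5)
The paper offers no proof of this proposition: it is quoted from \cite{BieliavskyEtAl:2005}. Your Step 1 is sound and is in fact exactly the observation the paper itself makes in the paragraph preceding the statement --- Lemma \ref{lemma:PropRicciType} closes the differential system, so every $(\nabla^kR)_p$ is a universal polynomial in $\rho_p$, $U_p$, $K$ and $\omega_p$, and a majorant argument of the kind you sketch does give Cauchy bounds $\|(\nabla^kR)\|\leq C^{k+1}k!$ uniformly on a neighborhood. (A secondary caveat: your binomial Leibniz recursions silently commute covariant derivatives; the commutators produce curvature correction terms which, while again polynomial in $\rho$ and $U$, inflate the term count and must be absorbed into the majorant.)

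The genuine gap is the last implication of Step 2: ``the bound above makes these series converge, so $\nabla$ is analytic at $p$.'' The normal-coordinate machinery of Section \ref{sect:RecCdt} produces the Taylor coefficients of $\omega_{ij}$ and $\Gamma^k_{ij}$ \emph{at the center} $p$ as polynomials in $(\nabla^kR)_p$, and your bounds make that formal series converge; but a smooth function whose Taylor series at a point converges need not equal that series. Analyticity requires uniform bounds $|\partial^\alpha\Gamma^k_{ij}|\leq C^{|\alpha|+1}|\alpha|!$ on a whole coordinate neighborhood, and the passage from covariant-derivative bounds on $R$ to coordinate-derivative bounds on $\Gamma$ away from the center of the chart is precisely the content of the theorem you are invoking as ``classical'' --- it is not supplied by Section \ref{sect:RecCdt} and is where all the work lies. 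Your proposed alternative does not close the gap either: the ``autonomous polynomial ODE'' for $(\rho\circ\gamma,U\circ\gamma)$ is formulated using geodesics and covariant derivatives of the very connection whose analyticity is in question, and analyticity of $t\mapsto\rho(\gamma(t))$ along each geodesic separately does not yield joint analyticity in the space variables. A complete argument has to exploit the closed finite-dimensional system more structurally (e.g.\ via the local reduction models or an analytic exterior differential system on the frame bundle), which is what the cited reference does.
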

We can indeed conclude that for a point $x\in M$, $\omega_x$, $\rho_x$, $U_x$ and $K$ determine the affine and symplectic structure of $(M,\omega,\nabla)$ in neighborhood of $x$ (see \cite{KN:FundDiffGeoI} theorem 7.2 and corollary 7.3).

\subsection{Construction of Ricci-type connections by reduction}

Here we outline a reduction procedure which yields a Ricci-type Fedosov manifold as the Marsden-Weinstein quotient of a hypersurface (defined by a quadratic equation) in a flat symplectic space, first presented by Baguis and Cahen in \cite{Baguis:2000}. 

Let $\Omega'$ be the standard flat symplectic form on $\mathbb{R}^{2n+2}$, and let $A$ be a non-zero element of $\mathfrak{sp}(\mathbb{R}^{2n+2},\Omega')$. The action of the 1-parameter subgroup $\{\exp(tA)\}$ of the symplectic group on $\mathbb{R}^{2n+2}$ is Hamiltonian, more precisely we have
\begin{equation*}
    \iota(A^*)\Omega'=dH_A
\end{equation*}
where $H_A(x):=\frac 12\Omega'(x,Ax)$ and $A^*$ is the corresponding fundamental vector field, defined by $A^*_x:=\left.\frac d{dt}e^{-tA}x\right|_{t=0}=-Ax$.

We consider a level set of the Hamiltonian:
\begin{equation*}
    \Sigma_A=\{x\in\mathbb{R}^{2n+2}|\Omega'(x,Ax)=1\}
\end{equation*}
(replace $A$ by $-A$ if necessary to ensure that $\Sigma_A$ is non-empty), a hypersurface in $\mathbb{R}^{2n+2}$. $\Sigma_A$ is invariant under the action of the 1-parameter subgroup $\{e^{tA}|t\in\mathbb{R}\}$ so that we can construct its Marsden-Weinstein quotient 
\begin{equation*}
    \pi:\Sigma_A\to M:=\Sigma_A/\{e^{tA}|t\in\mathbb{R}\}.
\end{equation*} 

For $x\in\Sigma_A$ we have $T_x\Sigma_A=\rangle Ax\langle^\bot$, where $\rangle v_1,...,v_p\langle$ denotes the subspace spanned by $v_1,...,v_p$ and $\bot$ denotes the orthogonal with respect to $\Omega'$. We define the horizontal subspace $\mathcal{H}_x:=\rangle x,Ax\langle^\bot$. The differential $\pi_{*x}$ then restricts to an isomorphism between $\mathcal{H}_x$ and the tangent space $T_{y=\pi(x)}M$, thus for any vector $X_y\in T_yM$ there exists a unique vector $\overline{X}_x\in \mathcal{H}_x$ such that $X_y=\pi_{*x}\overline{X}_x$, the horizontal lift of $X_y$ at $x$.

Over $M$ we define the reduced symplectic form
\begin{equation*}
    \omega_{y=\pi(x)}(X_y,Y_y)=\Omega'(\overline{X}_x,\overline{Y}_x).
\end{equation*}
And if we let $\nabla'$ denotes the standard flat connection on $\mathbb{R}^{2n+2}$, then the formula
\begin{equation*}
    (\nabla_XY)_{y=\pi(x)}:=\pi_{*x}((\nabla'_{\overline{X}}\overline{Y})_x-\Omega'(A\overline{X}_x,\overline{Y}_x)x+\Omega'(\overline{X}_x,\overline{Y}_x)Ax)
\end{equation*}
defines an affine connection on $M$.

\begin{theorem}[Baguis-Cahen, 2000 \cite{Baguis:2000}]
    Assume $2n\geq 4$, then $\nabla$ is a Ricci-type connection on $(M,\omega)$.
\end{theorem}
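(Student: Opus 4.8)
The plan is to verify directly that the reduced connection $\nabla$ defined by the displayed formula is torsionless, symplectic (i.e. $\nabla\omega=0$), and that its curvature tensor satisfies $W=0$, i.e. coincides with its $\mathcal{E}$-component determined by the Ricci tensor. The starting point is the observation that $\mathbb{R}^{2n+2}$ with $\Omega'$ and the flat connection $\nabla'$ is a (trivial) Ricci-type Fedosov manifold, and that the quotient construction is a symplectic reduction along the Hamiltonian $\mathbb{R}$-action generated by $A$; the non-trivial curvature of the reduced connection will come entirely from the second fundamental form / O'Neill-type correction terms appearing in the formula for $\nabla_XY$.

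First I would check the basic structural facts: that $\mathcal{H}_x = \langle x, Ax\rangle^\perp$ is a well-defined horizontal distribution complementary to the orbit direction $Ax$ inside $T_x\Sigma_A$, that it is invariant under the flow of $A^*$ (using $A\in\mathfrak{sp}$ and $A^2x$ considerations on $\Sigma_A$), and hence descends to $M$; then that $\omega$ is well-defined, closed and non-degenerate (standard Marsden–Weinstein). Next I would verify that the correction terms $-\Omega'(A\overline X,\overline Y)x + \Omega'(\overline X,\overline Y)Ax$ land in the right place so that the bracketed expression, after applying $\nabla'$ to horizontal lifts, is again horizontal (this is precisely what makes $\pi_{*x}$ of it well-defined and independent of the chosen lift $x$ in the fiber), and that the resulting $\nabla$ is torsionless — here one uses that $\nabla'$ is torsionless, that horizontal lifts of brackets differ from brackets of horizontal lifts by a vertical (hence $\propto Ax$) term, and that the antisymmetric part of the two correction terms cancels this discrepancy. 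Then $\nabla\omega=0$ follows by differentiating $\omega(X,Y)\circ\pi = \Omega'(\overline X,\overline Y)$ along a horizontal curve and feeding in the formula for $\nabla$, the extra terms dropping out because $\Omega'(x,\overline Z)=\Omega'(Ax,\overline Z)=0$ for horizontal $\overline Z$.

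The computational heart is the curvature. I would compute $R(X,Y)Z$ at $y=\pi(x)$ by applying the connection formula twice, being careful that $\nabla_XY$ is only defined on $M$ so its horizontal lift must be re-expressed; using flatness of $\nabla'$, the curvature reduces to the terms produced by differentiating the correction terms and by the failure of $x\mapsto\overline X_x$ to be $\nabla'$-parallel along horizontal directions. After the dust settles one expects an expression of the schematic form $R(X,Y)Z = \alpha\,\omega(X,Y)\rho Z + \beta\,\big(\omega(X,Z)\rho Y - \omega(Y,Z)\rho X\big) + \gamma\,\big(\omega(X,\rho Z)Y - \omega(Y,\rho Z)X\big)$ where $\rho$ is (up to normalization) the endomorphism induced by $A^2$ restricted to $\mathcal{H}_x$, and one then identifies the Ricci tensor $\rho$ of $\nabla$ with this same object and checks that the constants match the normalization $\tfrac1{2(n+1)}$ in \eqref{eq:RicciTypeCurv}, so that $R=E$ and $W=0$. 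The main obstacle I anticipate is exactly this bookkeeping: tracking how $\overline X_x$ varies with $x$ (equivalently, computing $\nabla'$ of the horizontal projection operator $P_x:\mathbb{R}^{2n+2}\to\mathcal{H}_x$), since $\mathcal{H}_x$ depends on both $x$ and $Ax$ and the projection has a rational dependence on $x$ through the normalization $\Omega'(x,Ax)=1$; handling this cleanly — perhaps by choosing a convenient local slice of the $\mathbb{R}$-action or by working with lifted vector fields that are genuinely horizontal along a horizontal curve — is what will make the identification of the curvature tractable. The remaining verification, that $\dim M = 2n$ so the hypothesis $2n\geq 4$ of the definition is the relevant one, is immediate from $\dim\mathbb{R}^{2n+2} - 2 = 2n$.
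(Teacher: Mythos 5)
The paper does not actually prove this theorem: it is imported verbatim from Baguis--Cahen \cite{Baguis:2000}, so there is no internal proof to compare against. Your outline does follow the route of the original argument --- verify that the reduced $\nabla$ is a well-defined torsionless symplectic connection, then compute its curvature from the O'Neill-type correction terms --- and your structural preliminaries (that $\mathcal{H}_x$ is a complement to the orbit direction in $T_x\Sigma_A$, equivariance under $e^{tA}$, well-definedness along the fiber, the dimension count $\dim M=2n$) are all correct.

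As a proof, however, the proposal has a genuine gap: the entire content of the theorem is that the curvature of $\nabla$ lies in the $\mathcal{E}$-component, i.e.\ takes exactly the five-term form \ref{eq:RicciTypeCurv}, and this is precisely the step you defer with ``after the dust settles one expects an expression of the schematic form\dots''. A priori the curvature of a reduced connection has no reason to land in $\mathcal{E}_x$ rather than having a nonzero $\mathcal{W}_x$-part, and verifying that it does (with the correct coefficients) \emph{is} the theorem; nothing in your outline forces the terms coming from differentiating the correction terms and from the variation of the horizontal projection to assemble in this way. Moreover, your identification of $\rho$ as the endomorphism induced by $A^2$ is almost certainly wrong: the correction terms in the connection formula are linear in $A$, and the computation in \cite{Baguis:2000} (see also \cite{BieliavskyEtAl:2005}) yields $\rho$ proportional to the \emph{horizontal projection of $A$ itself}; $A^2$ only enters one derivative later, in the expression for $\nabla U$ (lemma \ref{lemma:PropRicciType}(ii)) and in the local-symmetry criterion $A^2=\lambda I$. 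Carrying out the curvature computation and the trace identification of $\rho$ is what remains to be done.
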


A simple criterium allows us to determine whether the manifold obtained through reduction is locally symmetric.

\begin{theorem}[Cahen-Gutt-Schwachhöfer, 2003]
    $(M,\omega,\nabla)$ is locally symmetric if and only if $A^2=\lambda I$ for a constant $\lambda\in\mathbb{R}$.
\end{theorem}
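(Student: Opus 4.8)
The plan is to run both implications through the structure theory of Ricci‑type connections recalled in Lemma~\ref{lemma:PropRicciType} together with the explicit reduction formulas, the pivot being that (since $2n\geq4$ and $R=E$) local symmetry of $\nabla$ is equivalent to the vanishing of the distinguished vector field $U$. Indeed, $R=E$ is a fixed $\mathrm{Sp}$‑equivariant linear expression in $\rho$ and $\omega$ (equation~\ref{eq:RicciTypeCurv}), and $\rho\mapsto E$ is a linear isomorphism onto the component $\mathcal E_x$; since $\nabla\omega=0$ this gives $\nabla R=0\iff\nabla\rho=0$. By Lemma~\ref{lemma:PropRicciType}(i), $\nabla_X\rho=-\tfrac1{2n+1}(X\otimes\underline U+U\otimes\underline X)$, and evaluating at $X=U$ yields $\nabla_U\rho=-\tfrac2{2n+1}\,U\otimes\underline U$, which is nonzero wherever $U\neq0$ (nondegeneracy of $\omega$). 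Hence $(M,\omega,\nabla)$ is locally symmetric $\iff\nabla R=0\iff U\equiv0$ on $M$.

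For the ``if'' direction, assume $A^2=\lambda I$. Then for every $x\in\Sigma_A$ the operator $A$ preserves the symplectic plane $\Pi_x:=\mathrm{span}\{x,Ax\}$ (it sends $x\mapsto Ax\mapsto\lambda x$), hence also its $\Omega'$‑orthogonal complement $\mathcal H_x$. Let $\sigma_x\in\mathrm{GL}(\mathbb R^{2n+2})$ be the involution equal to $\mathrm{id}$ on $\Pi_x$ and to $-\mathrm{id}$ on $\mathcal H_x$. As both summands are $A$‑invariant symplectic subspaces on which $\sigma_x$ acts by $\pm\mathrm{id}$, we get $\sigma_x\in\mathrm{Sp}(\mathbb R^{2n+2},\Omega')$ and $\sigma_x A=A\sigma_x$; therefore $\sigma_x$ preserves $\Sigma_A$ and commutes with $\{e^{tA}\}$, so it descends to a diffeomorphism $\bar\sigma_x$ of $M$. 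Because $\omega$ and $\nabla$ were manufactured solely from $\nabla'$, $\Omega'$ and $A$ — all preserved by $\sigma_x$ — the map $\bar\sigma_x$ is a symplectomorphism and an affine transformation of $\nabla$; it fixes $\pi(x)$ and has differential $-\mathrm{id}$ there under $T_{\pi(x)}M\cong\mathcal H_x$. An affine involution with isolated fixed point $y$ and differential $-\mathrm{id}$ at $y$ is the local geodesic symmetry at $y$, so $(M,\omega,\nabla)$ is a symplectic symmetric space; in particular $\nabla R=0$.

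For the ``only if'' direction, assume $\nabla$ is locally symmetric, so by the pivot $U\equiv0$. Here one invokes the explicit reduced Ricci data: differentiating the reduction formula for $\nabla$ to obtain $R$, then $\rho$, then $\nabla\rho$ (as carried out in \cite{Baguis:2000}, see also \cite{CahenGuttRawnsley:2000}) identifies $\rho_{\pi(x)}$ with the endomorphism of $\mathcal H_x$ induced by $A^2$ and shows that $U_{\pi(x)}$ is a fixed nonzero multiple of $\pi_{*x}$ applied to the $\mathcal H_x$‑component of $A^2x$; in particular $U_{\pi(x)}=0$ exactly when $A^2x\in\mathrm{span}\{x,Ax\}$. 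Thus $U\equiv0$ forces $A^2x\in\mathrm{span}\{x,Ax\}$ for all $x\in\Sigma_A$; this condition is homogeneous in $x$ and $\Sigma_A$ spans an open (hence Zariski‑dense) cone, so the polynomial map $x\mapsto x\wedge Ax\wedge A^2x$ vanishes identically, i.e.\ every $A$‑cyclic subspace has dimension $\leq2$. Taking a cyclic vector for the largest invariant factor of $A$, the minimal polynomial of $A$ has degree $\leq2$, say $A^2=\alpha A+\beta I$. Finally $A\in\mathfrak{sp}(\mathbb R^{2n+2},\Omega')$ is conjugate to $-A^{\mathsf T}$, so its characteristic polynomial is even; the eigenvalues of $A$ lie among the two roots of $t^2-\alpha t-\beta$, whose sum is $\alpha$, and negation‑symmetry of the spectrum (distinct roots must be $\{\mu,-\mu\}$; a repeated root must be $0$) forces $\alpha=0$. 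Hence $A^2=\beta I=:\lambda I$.

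The routine points are the isomorphism $\rho\mapsto E$ onto $\mathcal E_x$, the ``vanishing of a polynomial on a dense set'' step, and the symplectic‑spectrum argument. The genuine content — and the step I expect to be the main obstacle — is the explicit identification of the reduced Ricci data, precisely the claim that $U_{\pi(x)}=0$ if and only if $A$ leaves $\mathrm{span}\{x,Ax\}$ invariant; establishing this requires pushing the reduction formula for $\nabla$ through the computation of $R$, $\rho$ and $\nabla\rho$. Once it is in place, the linear‑algebraic reduction to $A^2=\lambda I$ proceeds as above, and the converse implication either follows by reversing these steps or, more transparently, from the symmetry‑lifting argument.
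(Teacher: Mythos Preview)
The paper does not prove this theorem: it is stated with attribution to Cahen--Gutt--Schwachh\"ofer (2003) and then used, without argument, to exhibit non-locally-symmetric Ricci-type connections. There is therefore no in-paper proof to compare your proposal against.

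That said, your outline is sound and follows the strategy of the original reference. The pivot ``$\nabla R=0\iff U\equiv0$'' is correct for Ricci-type connections in dimension $\geq4$: injectivity of $\rho\mapsto E$ follows by taking the Ricci trace of $E$, and Lemma~\ref{lemma:PropRicciType}(i) then gives the equivalence with $U=0$. Your ``if'' direction is clean: when $A^2=\lambda I$ the plane $\mathrm{span}\{x,Ax\}$ is $A$-invariant, the linear involution $\sigma_x$ you describe is symplectic, commutes with $A$, and descends to an affine symplectic involution of $(M,\omega,\nabla)$ with differential $-\mathrm{id}$ at $\pi(x)$; this is indeed the geodesic symmetry. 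For the ``only if'' direction, the chain ``$U_{\pi(x)}=0\Rightarrow A^2x\in\mathrm{span}\{x,Ax\}$ for all $x\in\Sigma_A\Rightarrow x\wedge Ax\wedge A^2x\equiv0\Rightarrow\deg(\text{min.\ poly of }A)\leq2\Rightarrow A^2=\alpha A+\beta I\Rightarrow\alpha=0$ by spectrum symmetry'' is correct; the Zariski-density step works because the positive cone over $\Sigma_A$ is a nonempty open set, and the minimal-polynomial step follows from the existence of a cyclic vector realizing the largest invariant factor.

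You are right to flag the one genuine obligation: the explicit computation identifying $U_{\pi(x)}$ (up to a nonzero constant) with the horizontal projection of $A^2x$. This is carried out in \cite{Baguis:2000}; without it, the backward implication is incomplete. One minor point you might add for completeness: the minimal polynomial cannot have degree~$1$, since $A=cI\in\mathfrak{sp}(\mathbb{R}^{2n+2},\Omega')$ forces $c=0$, contradicting $A\neq0$.
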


In particular, we can exhibit non-locally-symmetric Ricci-type connections.

\begin{corollary}
    Let $2n\geq 4$, there exist non-locally-symmetric Ricci-type connections of dimension $2n$.
\end{corollary}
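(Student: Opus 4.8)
The plan is to produce the desired examples via the Baguis--Cahen reduction, choosing the infinitesimal generator $A$ so that the Cahen--Gutt--Schwachh\"ofer criterion forbids local symmetry. Concretely, I would fix $n\geq 2$, pick a suitable nonzero $A\in\mathfrak{sp}(\mathbb{R}^{2n+2},\Omega')$ with $A^2$ \emph{not} a scalar multiple of the identity, run the reduction recalled above to obtain a $2n$-dimensional Fedosov manifold $(M,\omega,\nabla)$, invoke the Baguis--Cahen theorem (applicable since $2n\geq 4$) to conclude that $\nabla$ is of Ricci-type, and finally invoke the Cahen--Gutt--Schwachh\"ofer theorem to conclude that $(M,\omega,\nabla)$ is \emph{not} locally symmetric because $A^2\neq\lambda I$.

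For the choice of $A$, I would take a nonzero nilpotent element of $\mathfrak{sp}(\mathbb{R}^{2n+2},\Omega')$ with $A^2\neq 0$ --- for instance the regular nilpotent, or any symplectic nilpotent whose Jordan form has a block of size $\geq 3$; such elements exist for every $n\geq 2$ since $2n+2\geq 6$. Then $A^2$ is a nonzero nilpotent endomorphism, hence $A^2\neq\lambda I$ for every $\lambda\in\mathbb{R}$. I would then verify that the Marsden--Weinstein quotient is an honest manifold for this $A$: after replacing $A$ by $-A$ if needed, $\Sigma_A$ is a nonempty hypersurface; for $x\in\Sigma_A$ one has $Ax\neq 0$ (otherwise $\Omega'(x,Ax)=0$), and nilpotency of $A$ gives $e^{tA}x=x\iff tAx=0\iff t=0$, so the $\mathbb{R}$-action is free on $\Sigma_A$; moreover each orbit $t\mapsto e^{tA}x$ is a polynomial curve with nonzero leading term, hence a closed embedded submanifold, so the action is proper. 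Thus $M=\Sigma_A/\{e^{tA}\}$ (or, to sidestep any discussion of Hausdorffness, a connected open piece of it) is a smooth $2n$-dimensional manifold carrying the reduced $\omega$ and $\nabla$, and the conclusion follows by chaining the two cited theorems.

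The only genuine obstacle --- and the reason the choice of $A$ matters beyond $A^2\neq\lambda I$ --- is confirming that the reduction output is a bona fide Fedosov manifold, i.e. that the one-parameter subgroup acts freely and properly on $\Sigma_A$; here the nilpotency of $A$ makes both checks immediate, as sketched above, so in the end the corollary is essentially an assembly of the Baguis--Cahen and Cahen--Gutt--Schwachh\"ofer theorems together with this elementary choice of generator.
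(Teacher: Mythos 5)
Your proof is correct and follows the same route as the paper: choose $A\in\mathfrak{sp}(\mathbb{R}^{2n+2},\Omega')$ with $A^2$ not proportional to the identity, apply the Baguis--Cahen reduction theorem to get a Ricci-type connection, and the Cahen--Gutt--Schwachh\"ofer criterion to exclude local symmetry. The only differences are cosmetic: the paper exhibits $A=\left(\begin{smallmatrix}E&I\\I&E\end{smallmatrix}\right)$ with $E$ antisymmetric nonzero rather than your nilpotent with a Jordan block of size $\geq 3$ (both choices work), and it does not carry out your extra verification that the quotient is a genuine manifold.
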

\begin{proof}
    For any $2n\geq 4$, one can find infinitely many $A\in\mathfrak{sp}(\mathbb{R}^{2n+2},\Omega')-\{0\}$ for which $A^2$ is not proportional to the identity. For example one could take
    \begin{equation*}
        A:=\begin{pmatrix}
            E & I\\
            I & E
        \end{pmatrix}
    \end{equation*}
    with $E$ a non-zero antisymmetric $(n+1)\times (n+1)$ matrix.
\end{proof}

\subsection{Ricci-type connections are of type S}

Investigating the form that the (Q$r$) conditions will take for Ricci-type connections, and more generally determining which Ricci-type connections are of type S, are questions that one would naturally be lead to given the considerations presented in the previous section. The answer turns out to be particularly simple: a Ricci-type connection will satisfy all (Q$r$) conditions (for $r=3,5,7,...$) and thus be of type S, given that Ricci-type connections are always analytic.
\begin{theorem}\label{thm:RicciTypeQrCdts}
    Let $(M,\omega,\nabla)$ be a Fedosov manifold of dimension $2n\geq 4$, with $\nabla$ of Ricci-type. Then $\nabla$ satisfies all (Q$r$) conditions for $r=3,5,7,...$.
\end{theorem}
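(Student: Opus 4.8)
The plan is to reduce the problem to a single structural fact: for a Ricci-type connection, every covariant derivative $\nabla^l_X\Pi$ — and hence every recursion operator $Q^r_0$ — lies, up to symmetric-adjoint corrections that cancel in (Q$r$), in $\mathfrak{sp}(T_pM,\omega_p)$, or more precisely that the ``symmetric part'' built into $P^r$ always vanishes. Concretely, I would start from Proposition (ii), which gives $\nabla^l_X\Pi_0 = (\nabla^lR)_p(X_p,-;X_p,\dots,X_p)X_p$, and use the Ricci-type identities of Lemma \ref{lemma:PropRicciType} to get closed formulas. Since $R = E$ with $E$ given by \eqref{eq:RicciTypeCurv}, one computes directly $\Pi Y = E(X,Y)X = \tfrac{1}{2(n+1)}\big[\omega(X,Y)\rho X - \omega(X,\rho X)Y\big] + \tfrac{1}{2(n+1)}\big[2\omega(X,X)\rho Y + \omega(X,\rho Y)X\big]$; the terms with $\omega(X,X)=0$ drop, leaving $\Pi Y = \tfrac{1}{2(n+1)}\big[\omega(X,Y)\rho X - \omega(X,\rho X)Y + \omega(X,\rho Y)X\big]$. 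One then checks that this $\Pi$, and each $\nabla^l_X\Pi$ obtained by differentiating using (i) and (ii) of Lemma \ref{lemma:PropRicciType} (which replace $\nabla_X\rho$ by $-\tfrac{1}{2n+1}(X\otimes\underline U + U\otimes\underline X)$ and $\nabla_XU$ by $-\tfrac{2n+1}{2(n+1)}\rho^2 X + fX$), is a sum of rank-one-type endomorphisms of the shape $Y\mapsto \omega(a,Y)b$ plus a scalar multiple of the identity, where $a,b$ range over $\{X,\rho X,\rho^2 X,\dots,U,\dots\}$.

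The second step is to understand how such endomorphisms behave under symplectic transpose and composition, since $P^r$ is built from these. An endomorphism $T_{a,b}\colon Y\mapsto \omega(a,Y)b$ has transpose $T_{a,b}^\top\colon Y\mapsto \omega(b,Y)a = -T_{b,a}$... (sign to be pinned down with the paper's convention $\omega(-,A-)=\omega(A^\top-,-)$), and the key point is that $T_{a,b} + T_{b,a}^\top$ or the relevant combination is symmetric/skew in a controlled way, so that $T_{a,b}\in\mathfrak{sp}$ iff $a\parallel b$. A cleaner route: work with the covariant $2$-tensor $\omega_p(Q^r_0-,-)$ directly and show it is symmetric (which is exactly the (Q$r$) condition, after accounting for the $\binom{r+2}{q+1}$-weighted cross terms). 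Because every $Q^r_0$ is, after expansion via \eqref{eq:CoeffQ}, a polynomial in the commuting-enough data $\rho,\ \rho^2,\dots$ and the vectors $X,U$, its ``$\omega$-lowering'' $\omega_p(Q^r_0-,-)$ will be manifestly a symmetric bilinear form (sums of $\omega(a,-)\omega(b,-)$ symmetrised, plus $\omega(-,-)\times$scalar which is skew and must be shown to cancel against the cross terms) — and likewise the composite terms $(Q^q_0)^\top\circ Q^{r-q}_0$ assemble into symmetric forms. The identity displayed just before Theorem \ref{theorem:Qr}, $\omega_p(P^r_0-,-)+\omega_p(-,P^r_0-)$ equals the full (Q$r$) expression, then lets us conclude: we only need the whole bracket to vanish, not each piece.

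The main obstacle is the bookkeeping of the $\omega(X,X)=0$ and $(\nabla^l_X\Pi)X=0$ cancellations at every order of differentiation simultaneously with the combinatorial weights $\binom{r+2}{q+1}$ and $\binom{r-1}{i_1}\cdots$ appearing in $Q^r$ and $P^r$: one must verify that the skew (i.e. proportional-to-$\omega$) pieces coming from the ``identity-part'' of each $\nabla^l_X\Pi$ telescope against the quadratic cross terms $\sum_q\binom{r+2}{q+1}\omega_p(Q^{r-q}_0-,Q^q_0-)$. I expect this is best handled not term-by-term but by exploiting the generating-function structure already implicit in the Jacobi recursion \eqref{eq:DerJacobi}: since $h_{ij}(t)=\omega_{\gamma(t)}(Z_i(t),Z_j(t))$ and, for a Ricci-type connection, the Jacobi field $Z_i$ along $\gamma$ can be solved almost explicitly (the operator $\Pi$ along $\gamma$ acts on the distinguished plane spanned by $\rho^k X$ and on its $\omega$-complement in a tractable way, by Lemma \ref{lemma:PropRicciType}), one should show directly that $h_{ij}$ is an even function of $t$ and $h_{1i}$ odd — equivalently that $s_p^*\omega=\omega$ — and then invoke Theorem \ref{theorem:Qr} (together with the analyticity of Ricci-type connections) to read off that all (Q$r$) hold. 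That analytic/geometric shortcut sidesteps the worst of the combinatorics; the fallback, if the explicit Jacobi-field solution proves unwieldy, is the direct induction on $r$ sketched above, proving by induction that $\omega_p(Q^r_0-,-)$ is symmetric modulo a scalar multiple of $\omega_p$ with a computable coefficient, and checking the scalar coefficients satisfy the recursion forced by (Q$r$).
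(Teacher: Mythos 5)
Your opening move --- expressing $\Pi$ and its covariant derivatives through equation \ref{eq:RicciTypeCurv} and Lemma \ref{lemma:PropRicciType} --- is exactly where the paper starts, but your proposal stops short of the one computation that makes the theorem tractable, and the strategies you offer in its place are not carried out. If you actually differentiate twice you find that for a Ricci-type connection $\nabla_X\Pi=\frac{4}{(n+1)(2n+1)}\,\omega(-,X)\,\omega(U,X)\,X$ is a rank-one operator and, because $\omega(\nabla_XU,X)=0$ (point (ii) of the lemma together with $\omega(\rho^2X,X)=-\omega(\rho X,\rho X)=0$), that $\nabla^2_X\Pi=0$. Combined with the composition identities $\nabla_X\Pi\circ\nabla_X\Pi=0$, $\Pi\circ\nabla_X\Pi=0$ and $\nabla_X\Pi\circ\Pi=\frac{2}{n+1}r(X,X)\,\nabla_X\Pi$, this collapses the entire recursion: every $Q^r$ with $r$ odd is a scalar multiple of $\nabla_X\Pi$, which is visibly in $\mathfrak{sp}(T_pM,\omega_p)$, and every cross term $(Q^q)^\top\circ Q^{r-q}$ is again a multiple of $\nabla_X\Pi$. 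There is no telescoping of ``skew pieces against cross terms'' to organise --- the obstacle you single out as the main difficulty never arises.

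By contrast, the two routes you propose for that (non-)obstacle are both genuine gaps. The ``generating-function / explicit Jacobi field'' shortcut is unsubstantiated: even knowing that $\Pi(t)$ is affine in $t$ along $\gamma$, the Jacobi equation $\nabla^2_XZ=\Pi Z$ is an Airy-type ODE and you give no argument that $h_{ij}$ is even; note also that proving $s_p^*\omega=\omega$ directly and then invoking Theorem \ref{theorem:Qr} amounts to proving the corollary in order to deduce the theorem. The fallback induction (``$\omega_p(Q^r_0-,-)$ is symmetric modulo a computable multiple of $\omega_p$'') is a plan rather than an argument, and your preparatory observation that each $\nabla^l_X\Pi$ is a sum of rank-one operators plus a multiple of the identity, while true, does not by itself control the weighted sums in condition \ref{eq:Qr}. (Your formula for $\Pi Y$ also miscounts: the coefficient of $\omega(X,Y)\rho X$ is $3$, cf.\ equation \ref{eq:PiRicciType}.) In short: right first step, but the decisive fact --- $\nabla^2_X\Pi=0$ together with the near-nilpotency of the algebra generated by $\Pi_0$ and $(\nabla_X\Pi)_0$ --- is missing, and what you substitute for it would not go through as written.
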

\begin{corollary}
    Any Ricci-type connection is of type S.
\end{corollary}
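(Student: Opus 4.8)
The plan is to show that each endomorphism $Q^r_0$ (equivalently each $P^r_0$) lies in $\mathfrak{sp}(T_pM,\omega_p)$ by exploiting the rigid algebraic structure of a Ricci-type connection. The starting point is Proposition (ii) from Section \ref{sect:RecCdt}, which tells us $\nabla^l_X\Pi_0 = (\nabla^l R)_p(X_p,-;X_p,\dots,X_p)X_p$, and the formula \ref{eq:RicciTypeCurv} for $E = R$ together with Lemma \ref{lemma:PropRicciType}. Because $R = E$ is built entirely out of $\omega$ and $\rho$, and all covariant derivatives $\nabla^l\rho$ are controlled by $U$, $\rho$, $f$ (hence ultimately by $\rho$, $U$, $K$) via parts (i)--(iii) of Lemma \ref{lemma:PropRicciType}, the endomorphism $\nabla^l_X\Pi_0$ should reduce, after contracting everything with $X_p$, to a very short explicit expression. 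Concretely, from \ref{eq:RicciTypeCurv} one computes $\Pi_0 Y = E(X,Y)X$, and the $\omega(X,Y)\rho X$, $\omega(X,X)=0$, etc.\ terms collapse so that $\Pi_0$ is a rank-$\leq 2$ endomorphism of the form $\alpha\,\underline{X}_p\otimes(\dots) + \beta\,(\dots)\otimes\dots$ with coefficients involving $\rho_p X_p$ and scalars like $\omega_p(X_p,\rho_p X_p)$. The first key step is therefore to write $\Pi_0$, $\nabla_X\Pi_0$, and more generally each $\nabla^l_X\Pi_0$, explicitly in the form $c\,\underline{X}_p\otimes v + (\text{symmetric-looking correction})$, and to record which of these pieces are symplectic-antisymmetric on the nose.

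The second step is to feed these explicit low-rank expressions into the recursion \ref{eq:RecQ} (or directly into \ref{eq:CoeffQ}) and track the $\top$-symmetry type of each composite $\nabla^{i_1}_X\Pi_0\circ\cdots\circ\nabla^{i_k}_X\Pi_0$. Here the crucial observation is that each factor annihilates $X_p$ (we already know $(\nabla^l_X\Pi)X = 0$) and has image essentially spanned by $\underline{X}_p$-related vectors, so compositions of two or more factors will be forced into a one-dimensional image/kernel situation in which the $\top$-adjoint acts in a completely controlled way. In fact I expect the outcome to be that, for Ricci-type connections, $Q^r_0$ itself already lies in $\mathfrak{sp}(T_pM,\omega_p)$ for every odd $r$, not merely the combination $P^r$ — the higher composition terms $(Q^q)^\top\circ Q^{r-q}$ in the definition of $P^r$ either vanish or are individually symplectic-antisymmetric because the relevant images are isotropic lines. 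One then checks $(Q^r_0)^\top = -Q^r_0$ directly: the single-factor term $(r-1)\nabla^{r-2}_X\Pi_0$ is handled by the explicit formula from Step 1 (this is exactly the (Q3)-type computation, where $(\nabla_X R)(X,-)X\in\mathfrak{sp}$ because Ricci-type connections are preferred, Corollary \ref{cor:RicciTypePreferred}, and more), and the multi-factor terms are handled by the isotropy argument. Since Ricci-type connections are analytic (the Proposition recalled above), Theorem \ref{theorem:Qr} then upgrades ``satisfies all (Q$r$)'' to ``of type S'', giving the corollary.

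The main obstacle, I expect, is the bookkeeping in Step 2: showing that \emph{every} term arising from the nested recursion \ref{eq:CoeffQ} — not just the leading one — respects the symplectic antisymmetry, uniformly in $r$. The danger is a term of the shape $A\circ B$ with $A,B$ each only ``half'' symplectic-compatible, whose composite is neither symmetric nor antisymmetric in general; one must use the specific Ricci-type structure (the precise vectors spanning $\mathrm{im}(A)$, $\ker(A)$, and that $\omega_p$ pairs $X_p$ with $\rho_p X_p$ in a single controlled scalar) to see the cancellation. A clean way to organize this is to introduce the two-dimensional subspace $\Delta := \langle X_p, \rho_p X_p\rangle \subseteq T_pM$ (after verifying, via Lemma \ref{lemma:PropRicciType}, that $\nabla^l_X\Pi_0$ maps $T_pM$ into $\Delta$ and kills a codimension-$\leq 2$ subspace containing $X_p$), and then reduce all the algebra to the $2\times 2$ world of $\Delta$ equipped with $\omega_p|_\Delta$, where ``$\top$-antisymmetric'' simply means ``traceless''; at that point the vanishing-trace argument of Corollary \ref{cor:RicciTypePreferred}, applied to each composite, finishes the proof.
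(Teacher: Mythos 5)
Your overall strategy is the paper's: compute the derivatives $\nabla^l_X\Pi$ explicitly from the Ricci-type structure via Lemma \ref{lemma:PropRicciType}, then show that each $Q^r_0$ (odd $r$) and each cross term $(Q^q)^\top\circ Q^{r-q}$ in $P^r$ is \emph{individually} symplectic; you even correctly guess that $Q^r_0\in\mathfrak{sp}(T_pM,\omega_p)$ already holds for every odd $r$. But the step you yourself flag as ``the main obstacle'' --- controlling all the composites in \ref{eq:CoeffQ} --- is exactly where your plan is missing the key fact. That fact is $\nabla^2_X\Pi=0$: from Lemma \ref{lemma:PropRicciType}(i) one gets $\nabla_X\Pi=\tfrac{4}{(n+1)(2n+1)}\,\omega(-,X)\,\omega(U,X)\,X$, a rank-one operator of the special form $c\,\omega(-,X)X$ (hence in $\mathfrak{sp}$), and then Lemma \ref{lemma:PropRicciType}(ii) together with $\omega(\rho^2X,X)=-\omega(\rho X,\rho X)=0$ gives $\omega(\nabla_XU,X)=0$, hence $\nabla^q_X\Pi=0$ for all $q\geq2$. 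Combined with the identities $\nabla_X\Pi\circ\nabla_X\Pi=0$ and $\Pi\circ\nabla_X\Pi=0$, the only word surviving in $Q^r$ for odd $r$ is $\nabla_X\Pi\circ\Pi^{k-1}=\bigl(\tfrac{2}{n+1}r(X,X)\bigr)^{k-1}\nabla_X\Pi$, so every $Q^r_0$ with $r$ odd, and every cross term, is a scalar multiple of $\nabla_X\Pi$ and the bookkeeping evaporates. Without these vanishings your Step 2 must control arbitrary words in infinitely many distinct factors, and nothing in your sketch does that.

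Moreover, the substitute you propose for the bookkeeping --- restricting to $\Delta=\langle X_p,\rho_pX_p\rangle$ and declaring ``$\top$-antisymmetric $=$ traceless'' --- would fail. First, $\Pi$ does not map $T_pM$ into $\Delta$: by \ref{eq:PiRicciType} it contains the summand $\tfrac{1}{2(n+1)}r(X,X)\,\mathrm{id}$. Second, even for an operator whose image lies in $\Delta$, membership in $\mathfrak{sp}(T_pM,\omega_p)$ is a condition on all of $T_pM$ and is not detected by the trace of a restriction to a $2$-plane once $\dim M\geq4$; the identification of $\mathfrak{sp}$ with traceless endomorphisms is special to dimension $2$ and is precisely what the paper uses only in the separate surface section. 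Likewise, ``the image is an isotropic line'' does not by itself give symplectic antisymmetry: the operator $v\mapsto\omega(Y,v)X$ has isotropic image but lies in $\mathfrak{sp}$ only when $Y$ is proportional to $X$. What actually saves the argument is that the surviving composites are exactly of the form $c\,\omega(-,X)X$, and establishing that is the content you still need to supply.
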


Before proceeding with the proof, we bring the reader's attention to two easy consequences of our last finding. The first we obtain by invoking the Sekigawa theorem, the second by recalling the existence of non-locally-symmetric Ricci-type connections.

\begin{corollary}
    If the Levi-Civita connection of a Kähler manifold $(M,J,g)$ (of dimension $\geq 4$) is of Ricci-type (for the Kähler form), then $(M,J,g)$ is a locally symmetric Hermitian space.
\end{corollary}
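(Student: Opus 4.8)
The plan is simply to chain together two results already established: the corollary just above, that any Ricci-type connection is of type S, and the Sekigawa-Vanhecke theorem (Theorem~\ref{thm:SkVh}). The only point requiring care is to confirm that the geodesic symmetries appearing in the definition of type S are exactly the Riemannian geodesic symmetries that Sekigawa and Vanhecke consider.

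First I would observe that on a Kähler manifold $(M,J,g)$ the Levi-Civita connection $\nabla$ is automatically a symplectic connection for the Kähler form $\omega = g(-,J-)$: it is torsionfree, and $\nabla g = 0$ together with $\nabla J = 0$ gives $\nabla\omega = 0$. Hence $(M,\omega,\nabla)$ is a Fedosov manifold and the notion of type S applies to $\nabla$. Moreover, since $\nabla$ is the Levi-Civita connection, its geodesics are the Riemannian geodesics, so the local geodesic symmetries $s_p$ entering the definition of type S coincide with the Riemannian geodesic symmetries of $(M,g)$.

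Next, as $\dim M \geq 4$ and $\nabla$ is assumed of Ricci-type, the corollary established just above yields that $\nabla$ is of type S; by definition this means that about every point $p$ there is a symmetric normal neighborhood on which $s_p^\star\omega = \omega$, i.e.\ the Riemannian geodesic symmetries preserve the Kähler form. The conclusion then follows at once from the Sekigawa-Vanhecke theorem (Theorem~\ref{thm:SkVh}): a Kähler (a fortiori almost Hermitian) manifold whose local geodesic symmetries preserve the Kähler form is a locally symmetric Hermitian space. I do not anticipate any genuine obstacle here: the mathematical substance is entirely carried by the two cited results, and the proof reduces to matching ``type S for the Levi-Civita connection'' with the hypothesis ``geodesic symmetries preserve the Kähler form'', once one notes that the affine geodesic symmetries of the Levi-Civita connection are the metric geodesic symmetries.
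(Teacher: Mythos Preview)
Your proposal is correct and follows essentially the same approach as the paper: combine the corollary that Ricci-type implies S-type with the Sekigawa--Vanhecke theorem. The paper itself presents this result as an ``easy consequence'' obtained ``by invoking the Sekigawa theorem'' and gives no further detail; your added remarks (that the Levi-Civita connection is symplectic for the K\"ahler form, and that its affine geodesic symmetries coincide with the Riemannian ones) are appropriate clarifications but do not depart from the intended route.
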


\begin{corollary}
    There exist non-locally-symmetric S-type Fedosov manifold of any dimension $2n\geq 4$.
\end{corollary}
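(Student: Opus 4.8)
This corollary is an immediate consequence of two results already at our disposal. First, for every $2n\geq 4$ the corollary in Section~5.2 supplies a Fedosov manifold $(M,\omega,\nabla)$ of dimension $2n$ with $\nabla$ of Ricci-type and $(M,\omega,\nabla)$ not locally symmetric --- for instance the Marsden--Weinstein reduction of $\Sigma_A\subset\mathbb{R}^{2n+2}$ for some $A\in\mathfrak{sp}(\mathbb{R}^{2n+2},\Omega')$ with $A^2$ not proportional to the identity, such as $A=\bigl(\begin{smallmatrix}E & I\\ I & E\end{smallmatrix}\bigr)$ with $E$ a nonzero antisymmetric $(n+1)\times(n+1)$ matrix. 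Second, a Ricci-type connection is analytic and, by Theorem~\ref{thm:RicciTypeQrCdts}, satisfies condition (Q$r$) for every odd $r\geq 3$; hence by Theorem~\ref{theorem:Qr} it is of type S. Combining the two gives a non-locally-symmetric S-type Fedosov manifold in each dimension $2n\geq 4$. So the only substantial ingredient is the upstream Theorem~\ref{thm:RicciTypeQrCdts}, and it is a plan for that statement that I sketch here.

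To prove Theorem~\ref{thm:RicciTypeQrCdts}, fix $x\in M$ and $X:=X_p\in T_xM\setminus\{0\}$. By the discussion preceding Theorem~\ref{theorem:Qr}, it suffices to show that, for each odd $r\geq 3$,
\[
(r+2)\,\omega_x(Q^r_0-,-)+(r+2)\,\omega_x(-,Q^r_0-)+\sum_{q=2}^{r-4}\binom{r+2}{q+1}\,\omega_x(Q^{r-q}_0-,Q^q_0-)=0,
\]
equivalently that $P^r_0\in\mathfrak{sp}(T_xM,\omega_x)$. The engine is Lemma~\ref{lemma:PropRicciType}: for a Ricci-type connection $\nabla\rho$ is expressed through the vector field $U$, $\nabla U$ through $\rho^2$ and a function $f$, and $f$ through $\rho$ and the constant $K$; moreover $R=E$ is itself given by $\omega$ and $\rho$ via \eqref{eq:RicciTypeCurv}. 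Feeding this into $\nabla^\ell_X\Pi=(\nabla^\ell R)(X,-;X,\dots,X)X$, one writes each $\nabla^\ell_X\Pi|_0$ explicitly as a combination --- with coefficients polynomial in the components of $\rho_x$, $U_x$ and $K$ --- of the identity, of $\rho_x$, and of rank-one operators $v\otimes\omega_x(w,-)$ with $v,w$ drawn from the finite set of vectors obtained from $X_p$ and $U_x$ by iterated application of $\rho_x$ (the relations of Lemma~\ref{lemma:PropRicciType} together with Cayley--Hamilton keeping this set finite).

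Substituting these into the closed form $Q^r=\sum c^r_{i_1\dots i_k}\,\nabla^{i_1}_X\Pi\circ\cdots\circ\nabla^{i_k}_X\Pi$ of \eqref{eq:CoeffQ} and noting that such operators are closed under composition, each $Q^r_0$ stays in the same small family; the left-hand side of the displayed (Q$r$) identity then becomes a finite expression in $\omega_x$, $\rho_x$, $U_x$, $K$ and the weights $c^r_{i_1\dots i_k}$, $\binom{r+2}{q+1}$. Using that $\rho_x$ is $\omega_x$-self-adjoint (equivalently $\underline R_x\in\Lambda^2T^*_xM\otimes S^2T^*_xM$) to trade symplectic transposes for statements about $\rho_x$, the claim reduces to a finite list of polynomial identities in these data, to be checked for every odd $r$. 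I would try to tame the bookkeeping either by a generating-function argument packaging the coefficients $c^r_{i_1\dots i_k}$ and $\binom{r+2}{q+1}$, or, more conceptually, by recognizing the radial Taylor expansion of $\omega$ reconstructed from $(\rho_x,U_x,K)$ as the expansion attached to an explicit homogeneous, symmetric-like model, for which the parity in the radial variable required by Lemma~\ref{lemma:Derh} holds automatically.

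The main obstacle is precisely this last step. The combinatorics of \eqref{eq:CoeffQ} and of the (Q$r$) sum is heavy, and a brute-force term-by-term cancellation for all odd $r$ simultaneously is likely unmanageable; the difficulty is structural --- finding the right organizing principle (a generating function, or an ambient model space) that renders the cancellation transparent --- rather than computational. Once the endomorphisms $\nabla^\ell_X\Pi|_0$ are pinned down in terms of $X_p$, $\rho_x$, $U_x$ and $K$, the remaining verification should be finite and routine.
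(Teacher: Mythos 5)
Your derivation of the corollary itself is exactly the paper's: take the Baguis--Cahen reduction of $\Sigma_A$ with $A^2$ not proportional to the identity to obtain a non-locally-symmetric Ricci-type connection in each dimension $2n\geq 4$, then combine Theorem \ref{thm:RicciTypeQrCdts} with the analyticity of Ricci-type connections and Theorem \ref{theorem:Qr} to conclude it is of type S. That part is correct and needs no further comment.

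The gap is in your plan for Theorem \ref{thm:RicciTypeQrCdts}, which you rightly single out as the substantial ingredient but leave unfinished: you anticipate a ``finite list of polynomial identities to be checked for every odd $r$'' and concede that the cancellation is unresolved. The paper's proof shows that no such cancellation ever arises, because Lemma \ref{lemma:PropRicciType} forces a collapse far earlier than your plan assumes. Plugging part (i) of that lemma into \eqref{eq:RicciTypeCurv} gives the rank-one formula
\begin{equation*}
\nabla_X\Pi=\frac{4}{(n+1)(2n+1)}\,\omega(-,X)\,\omega(U,X)\,X,
\end{equation*}
and part (ii) together with $\omega(\rho^2X,X)=-\omega(\rho X,\rho X)=0$ then yields $\nabla^2_X\Pi=0$, hence $\nabla^q_X\Pi=0$ for all $q\geq 2$. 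Moreover $\nabla_X\Pi\circ\nabla_X\Pi=0$, $\Pi\circ\nabla_X\Pi=0$, and $\nabla_X\Pi\circ\Pi=\frac{2}{n+1}\,r(X,X)\,\nabla_X\Pi$. Feeding these into \eqref{eq:CoeffQ}, the only surviving words for odd $r$ are $\nabla_X\Pi\circ\Pi\circ\cdots\circ\Pi$, so each $Q^r$ ($r$ odd) is a scalar multiple of $\nabla_X\Pi$; since $\nabla_X\Pi$ is visibly in $\mathfrak{sp}(T_pM,\omega_p)$ and each cross term $(Q^q)^\top\circ Q^{r-q}$ is again a multiple of $\nabla_X\Pi$, every $P^r_0$ is symplectic. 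The ``organizing principle'' you were searching for is thus simply that, for Ricci-type connections, the entire recursion lives in the one-dimensional span of the rank-one operator $\nabla_X\Pi$; there is no generating-function or model-space argument to be found, and no combinatorial identity to verify.
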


\begin{proof}[Proof of theorem \ref{thm:RicciTypeQrCdts}]
    
Let $(M,\omega,\nabla)$ be a Fedosov manifold of dimension $2n\geq 4$ with $\nabla$ of Ricci-type, let us fix a point $p\in M$, a non-zero tangent vector $X_p\in T_pM-\{0\}$, and let us define the endomorphism fields $\Pi$, $Q^r$ and $P^r$ along the geodesic $\gamma$ (defined by $\dot{\gamma}(0)=X_p$) as in section \ref{sect:RecCdt}.

Our first step will be reexpressing the covariant derivatives of $\Pi$ in terms of the Ricci endomorphism $\rho$ as well as the vector field $U$, the function $f$ and the constant $K$ that were introduced in lemma \ref{lemma:PropRicciType}. Making use of equation \ref{eq:RicciTypeCurv} (expressing the curvature of a Ricci-type connection in terms of $\omega$ and $\rho$), $\Pi:=R(X,-)X$ can be rewritten
\begin{equation}\label{eq:PiRicciType}
    \begin{split}
        \Pi=&\frac 1{2(n+1)}\left(3\omega(X,-)\rho X-r(X,-)X+r(X,X)\right).
    \end{split}
\end{equation}

Using the fact that a Ricci-type connection is preferred (corollary \ref{cor:RicciTypePreferred}), which implies $(\nabla_Xr)(X,X)=0$, we compute
\begin{equation}\label{eq:RicciTypeDPi1}
        \nabla_X\Pi=\frac 1{2(n+1)}\left(3\omega(X,-)(\nabla_X\rho)X-(\nabla_Xr)(X,-)X\right).
\end{equation}
From point (i) in lemma \ref{lemma:PropRicciType} we obtain
\begin{equation}\label{eq:Drho}
    (\nabla_X\rho)X=\frac 1{2n+1}\omega(X,U)X
\end{equation}
\begin{equation}\label{eq:Dr}
    (\nabla_Xr)(X,-)=\omega(X,\nabla_X\rho-)=\frac 1{2n+1}\omega(U,X)\omega(X,-)
\end{equation}
and injecting \ref{eq:Drho} and \ref{eq:Dr} in \ref{eq:RicciTypeDPi1} we get
\begin{equation}\label{eq:RicciTypeDPi}
    \nabla_X\Pi=\frac 4{(n+1)(2n+1)}\omega(-,X)\omega(U,X)X.
\end{equation}

Applying $\nabla_X$ to \ref{eq:RicciTypeDPi}, we get
\begin{equation}\label{eq:RicciTypeD2Pi1}
    \nabla^2_X\Pi=\frac 4{(n+1)(2n+1)}\omega(-,X)\omega(\nabla_XU,X)X.
\end{equation}
And since $\omega(\rho^2X,X)=-\omega(\rho X,\rho X)=0$ (as $\rho$ is a symplectic endomorphism), from point (ii) in lemma \ref{lemma:PropRicciType}
\begin{equation*}
    \nabla_XU=-\frac {2n+1}{2(n+1)}(\rho^2)X+fX
\end{equation*}
we deduce
\begin{equation*}
    \omega(\nabla_XU,X)=0
\end{equation*}
which we can inject in \ref{eq:RicciTypeD2Pi1} to obtain
\begin{equation}\label{eq:RicciTypeD2Pi}
    \nabla^2_X\Pi=0.
\end{equation}
Thus we encounter our first major simplification: for a Ricci-type connection, $\nabla^q_X\Pi$ vanishes for all $q\geq 2$. In particular, the only terms that will remain in the expressions of the endomorphisms fields $Q^r$ $r\geq 2$ (we refer to the expressions given in equation \ref{eq:CoeffQ}) will be those obtained by composing $\Pi$ and $\nabla_X\Pi$ (i.e. those for which $i_j\leq 1$ for all $j$).

We can further simplify the expressions of the $Q^r$ by examining "binary" compositions of $\Pi$ and $\nabla_X\Pi$. From the expression for $\nabla_X\Pi$ given in \ref{eq:RicciTypeDPi} (and $\omega(X,X)=0$) we obtain
\begin{equation}\label{eq:RicciTypeDPiDPi}
    \nabla_X\Pi\circ\nabla_X\Pi=0.
\end{equation}
Putting together equations \ref{eq:RicciTypeDPi} and \ref{eq:PiRicciType} we can also compute
\begin{equation}\label{eq:RicciTypePiDPi}
    \begin{split}
        \Pi\circ\nabla_X\Pi=&\frac 2{2(n+1)^2(2n+1)}\left[-\omega(-,X)\omega(U,X)r(X,X)X\right.\\
        &\left.+\omega(-,X)\omega(U,X)r(X,X)X\right]\\
        =&0
    \end{split}
\end{equation}
and
\begin{equation}\label{eq:RicciTypeDPiPi}
    \begin{split}
        \nabla_X\Pi\circ\Pi=&\frac 2{(n+1)^2(2n+1)}\omega(U,X)X\left[3\omega(X,-)\underbrace{\omega(\rho X,X)}_{-r(X,X)}+r(X,X)\omega(-,X)\right]\\
        &=\frac 8{(n+1)^2(2n+1)}r(X,X)\omega(U,X)\omega(-,X)X\\
        &=\frac 2{n+1}r(X,X)\nabla_X\Pi.
    \end{split}
\end{equation}
Now for $r\geq 2$, recalling equation \ref{eq:CoeffQ} for the expression of $Q^r$:
\begin{equation*}
    Q^r=\sum c^r_{i_1...i_k}\nabla^{i_1}_X\Pi\circ...\circ\nabla^{i_k}_X\Pi
\end{equation*}
where we sum over $1\leq k$, and $0\leq i_j$ with $r=2k+i_1+...+i_k$, if we combine equations \ref{eq:RicciTypeD2Pi}, \ref{eq:RicciTypeDPiDPi} and \ref{eq:RicciTypePiDPi}, we see that $\nabla^{i_1}_X\Pi\circ...\circ\nabla^{i_k}_X\Pi$ can only be nonzero when $i_2=i_3=...=i_k=0$ and $i_1\leq 1$. If we assume that $r=2k+i_1+...+i_k$ is odd, then $i_1$ \emph{must} be equal to $1$, and using equation \ref{eq:RicciTypeDPiPi} we have
\begin{equation*}
    \nabla^{i_1}_X\Pi\circ...\circ\nabla^{i_k}_X\Pi=\nabla_X\Pi\circ\Pi\circ...\circ\Pi=\left(\frac 2{n+1}r(X,X)\right)^{k-1}\nabla_X\Pi
\end{equation*}
with $k=\frac{r-1}2$. In conclusion, for $r=3,5,7,...$, $Q^r$ is a multiple of $\nabla_X\Pi$, more precisely we have
\begin{equation}\label{eq:RicciTypeQrOdd}
    Q^r=B_rr(X,X)^{(r-3)/2}\nabla_X\Pi
\end{equation}
for some constant $B_r\in\mathbb{R}$ which only depends on $r$.

Using \ref{eq:RicciTypeDPi} once again, we can check that $\nabla_X\Pi$ is always a symplectic endomorphism (i.e. that $\nabla_X\Pi^\top=-\nabla_X\Pi$). Indeed we have
\begin{equation*}
    \begin{split}
        \omega(\nabla_X\Pi Y,Z)+\omega(Y,\nabla_X\Pi Z)&=\frac 2{(n+1)(2n+1)}\omega(U,X)\omega(Y,X)\left(\omega(X,Z)+\omega(Z,X)\right)\\
        &=0
    \end{split}
\end{equation*}
which shows in particular that a Ricci-type connection always satisfies the (Q3) condition. For $r$ odd with $r\geq 5$, recall that (Q$r$) is satisfied if and only if the endomorphism field
\begin{equation*}
    P^r:=(r+2)Q^r+\sum_{q=2}^{(r-1)/2}\binom{r+2}{q+1}(Q^q)^\top\circ Q^{r-q}
\end{equation*}
is symplectic (for all choices of $p\in M$ and $X_p\in T_pM-\{0\}$). 

We already know that $Q^r$ is symplectic as a multiple of $\nabla_X\Pi$ (equation \ref{eq:RicciTypeQrOdd}). We'll finish the proof by showing that $(Q^q)^\top\circ Q^{r-q}$ is symplectic for all $2\leq q\leq (r-1)/2$. Here, $r$ is odd so either $q$ is odd or $(r-q)$ is odd, and since $(Q^q)^\top\circ Q^{r-q}$ is symplectic if and only if its transpose $\left[(Q^q)^\top\circ Q^{r-q}\right]^\top=(Q^{r-q})^\top\circ Q^q$ is symplectic, we can assume without loss of generality that $q$ is odd.

In that case $(Q^q)^\top=Q^q=B_qr(X,X)^{(q-3)/2}\nabla_X\Pi$ and we can once again use equations \ref{eq:RicciTypeDPiDPi} and \ref{eq:RicciTypePiDPi} to show that
\begin{equation*}
    (Q^q)^\top\circ Q^{r-q}=\lambda\nabla_X\Pi\circ Q^{r-q}=\mu \nabla_X\Pi
\end{equation*}
for some real-valued functions $\lambda$, $\mu$. In particular $(Q^q)^\top\circ Q^{r-q}$ must be symplectic.

\end{proof}

\subsection{Homogeneous Ricci-type and non-Ricci-type symmetric spaces}

The class of compact homogeneous Ricci-type connections was investigated in \cite{CahenGutt:HomRicci}, where, after imposing conditions on the fundamental group, the following was obtained:

\begin{theorem}
    Let $(M,\omega)$ be a compact homogeneous symplectic manifold with finite fundamental group. If $(M,\omega)$ admits a homogeneous symplectic connection $\nabla$ with Ricci-type curvature, then $(M,\omega)$ is symplectomorphic to $(\mathbb{P}_n(\mathbb{C}),\omega_0)$ where $\omega_0$ is a multiple of the Kähler form of the Fubini-Study metric, and $\nabla$ is affinely equivalent to the Levi-Civita connection.
\end{theorem}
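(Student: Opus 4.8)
The strategy is to feed the homogeneity and compactness hypotheses into the reduction description of Ricci‑type connections; this is in essence the route of \cite{CahenGutt:HomRicci}. First I would reduce to the simply connected case: since $\pi_1(M)$ is finite, the universal cover $\widetilde M\to M$ is a \emph{finite} covering, so $\widetilde M$ is again compact; the pulled‑back data $(\widetilde\omega,\widetilde\nabla)$ is still a Ricci‑type Fedosov structure (being Ricci‑type, i.e. $W=0$, is a local condition, preserved by local affine symplectomorphisms), and it is still homogeneous, because the identity component of $\mathrm{Aut}(M,\omega,\nabla)$ acts transitively and such a connected‑group action lifts to a transitive action of (a cover of) that group on $\widetilde M$. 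Next I would invoke the classification of simply connected Ricci‑type Fedosov manifolds of dimension $2n\ge 4$ (Baguis--Cahen \cite{Baguis:2000}, Cahen--Gutt--Rawnsley \cite{CahenGuttRawnsley:2000}; recall that such connections are automatically analytic, cf. \cite{BieliavskyEtAl:2005}) to write $(\widetilde M,\widetilde\omega,\widetilde\nabla)$ as a Marsden--Weinstein quotient $\Sigma_A/\{\exp(tA)\}$ for some nonzero $A\in\mathfrak{sp}(\mathbb{R}^{2n+2},\Omega')$, with $\Sigma_A=\{x:\Omega'(x,Ax)=1\}$ and $\omega$, $\nabla$ given by the reduction formulas recalled above.

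The second step is to translate homogeneity into a condition on $A$. Any $g$ in the centralizer $Z:=Z_{\mathrm{Sp}}(A)$ of $A$ in $\mathrm{Sp}(\mathbb{R}^{2n+2},\Omega')$ preserves $\Omega'$, $A$, the Hamiltonian $H_A$ (hence $\Sigma_A$) and the horizontal distribution $\mathcal H_x=\rangle x,Ax\langle^{\bot}$, so it descends to an affine symplectomorphism of $(M,\omega,\nabla)$; and, by the analysis in \cite{CahenGuttRawnsley:2000}, every automorphism of the reduced structure arises this way, so $\mathrm{Aut}(M,\omega,\nabla)\cong Z/\{\exp(tA)\}$. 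Hence homogeneity is \emph{equivalent} to $Z$ acting transitively on $\Sigma_A$ (note $\{\exp(tA)\}\subset Z$).

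Now compactness enters, and this is where the real work lies. By a classical theorem of Montgomery, a connected Lie group acting transitively on a compact manifold contains a maximal compact subgroup that is still transitive; applied to $\mathrm{Aut}(M,\omega,\nabla)$ and pulled back to $Z$, this produces a compact subgroup of $Z$ which, together with $\{\exp(tA)\}$, acts transitively on $\Sigma_A$. An averaging argument then forces the symmetric bilinear form $g_A(x,y):=\Omega'(x,Ay)$ (so that $H_A(x)=\tfrac12 g_A(x,x)$) to be definite; equivalently $A$ is semisimple with purely imaginary spectrum $\pm i\mu_1,\dots,\pm i\mu_{n+1}$, $\mu_j>0$, i.e. $A$ is elliptic and $\Sigma_A$ is an ellipsoid, diffeomorphic to $S^{2n+1}$. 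Conjugating $A$ into $\mathfrak u(n+1)\subset\mathfrak{sp}(\mathbb{R}^{2n+2},\Omega')$ and grouping eigenspaces by modulus, $Z$ becomes a product $\prod_j\mathrm{U}(m_j)$ acting block‑diagonally on $\mathbb{C}^{n+1}=\bigoplus_j\mathbb{C}^{m_j}$, hence preserving each $\|z^{(j)}\|^2$; transitivity on $\Sigma_A$ then forces a single block, i.e. all the $\mu_j$ coincide and $A=\mu J$ for an $\Omega'$‑compatible complex structure $J$, so $A^2=-\mu^2 I$. Then the Cahen--Gutt--Schwachhöfer criterion recalled above gives that $(M,\omega,\nabla)$ is locally symmetric; and concretely $\Sigma_A\cong S^{2n+1}$ with $\{\exp(tA)\}$ acting as the Hopf circle, so $M\cong S^{2n+1}/S^1=\mathbb{P}_n(\mathbb{C})$, the reduced $\omega$ is a multiple of the Fubini--Study Kähler form, and the reduction formula for $\nabla$ returns the Levi‑Civita connection of that metric by a direct computation. (In particular $\widetilde M=M$ was already simply connected, so the finite‑fundamental‑group hypothesis was only used to launch the argument.)

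I expect the compactness step to be the main obstacle: one must rule out the scenario in which the orbits of $\{\exp(tA)\}$ on $\Sigma_A$ fail to be closed — so that $\Sigma_A$ itself is noncompact even though $M=\Sigma_A/\{\exp(tA)\}$ is compact — and turn ``a compact group acts transitively on $\Sigma_A$'' cleanly into definiteness of $g_A$; and one needs the identification $\mathrm{Aut}(M,\omega,\nabla)\cong Z/\{\exp(tA)\}$, which is the technical input I would borrow verbatim from \cite{CahenGuttRawnsley:2000}. Everything downstream of ``$A=\mu J$'' is routine.
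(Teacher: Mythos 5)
First, a point of comparison: the paper does not prove this theorem at all --- it is quoted verbatim from \cite{CahenGutt:HomRicci} as an external input, so there is no in-paper argument to measure your proposal against. Judged on its own terms, your sketch is a plausible outline of one possible route, but it rests on inputs that are not available in the literature you cite, and the step you yourself identify as ``where the real work lies'' is left open.

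The most serious gap is the very first step. The Baguis--Cahen reduction and the results of \cite{CahenGuttRawnsley:2000} provide \emph{local} models: every point of a Ricci-type Fedosov manifold has a neighbourhood affinely symplectically isomorphic to an open subset of some $\Sigma_A/\{\exp(tA)\}$. The global statement you invoke --- that every simply connected Ricci-type Fedosov manifold of dimension $2n\geq 4$ \emph{is} such a quotient --- is not a theorem in \cite{Baguis:2000}, \cite{CahenGuttRawnsley:2000} or \cite{BieliavskyEtAl:2005}; in general it is only a conjecture. To use it here you would at least need a developing-map/analytic-continuation argument (exploiting analyticity, completeness --- which does follow from compactness plus homogeneity --- and simple connectedness) to promote the local isomorphism to a global one, and that is genuinely nontrivial work. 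The identification $\mathrm{Aut}(M,\omega,\nabla)\cong Z_{\mathrm{Sp}}(A)/\{\exp(tA)\}$ is likewise asserted by citation but is not in \cite{CahenGuttRawnsley:2000} in that form; without it, ``homogeneity of $M$'' does not translate into ``$Z$ acts transitively on $\Sigma_A$''. Finally, the passage from compactness of $M$ to positive definiteness of $g_A$ --- including ruling out the case where the $\{\exp(tA)\}$-orbits are non-closed so that $\Sigma_A$ is noncompact with compact quotient --- is exactly the crux, and it is only gestured at. For what it is worth, the argument in \cite{CahenGutt:HomRicci} does not go through a global reduction model at all: it works intrinsically with the invariants $\rho$, $U$, $f$, $K$ of Lemma \ref{lemma:PropRicciType}, using invariance under the transitive group and integration over the compact manifold to kill $U$ and force $\nabla r=0$, after which the classification of symplectic symmetric spaces with Ricci-type curvature yields $\mathbb{P}_n(\mathbb{C})$. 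If you want to pursue your route, the three items above are what you would have to supply.
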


As there exist many examples of compact, simply-connected, Hermitian symmetric spaces \emph{other} than the complex projective spaces, we see that the class of S-type connections is strictly larger than that of Ricci-type connections.

\begin{corollary}
    There exist locally symmetric symplectic connections (in particular, S-type connections) which are not of Ricci-type.
\end{corollary}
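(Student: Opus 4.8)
The plan is to produce the example implicitly described in the paragraph preceding the statement. Fix a compact, simply connected Hermitian symmetric space $(M,J,g)$ that is \emph{not} biholomorphic to a complex projective space; the simplest choice is $M=\mathbb{P}_1(\mathbb{C})\times\mathbb{P}_1(\mathbb{C})$, but a complex Grassmannian $\mathrm{Gr}_2(\mathbb{C}^4)$ or a complex quadric $Q_m$ ($m\ge 3$) would do equally well. Equip $M$ with the Kähler form $\omega=g(-,J-)$ and let $\nabla$ be the Levi-Civita connection of $g$. Since $\dim_{\mathbb{R}}M\ge 4$, the notion of Ricci-type connection is meaningful for $(M,\omega,\nabla)$.

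First I would observe that $\nabla$ is a locally symmetric symplectic connection, hence of type S. Indeed, being Kähler, $\nabla$ is torsionfree and satisfies $\nabla\omega=0$ (because $\nabla g=0$ and $\nabla J=0$), so it is symplectic; and being the Levi-Civita connection of a symmetric space it satisfies $\nabla R=0$. As recalled at the opening of Section 5, such connections are automatically of type S. One may also argue directly: the global geodesic symmetry $s_p$ is an affine isometry with $ds_p|_p=-\mathrm{Id}$, so it fixes the parallel $2$-form $\omega$ at $p$, and two parallel tensors agreeing at a point agree everywhere.

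Next comes the main point: $\nabla$ is not of Ricci-type. Suppose it were. The identity component of the isometry group of $(M,g,J)$ acts transitively on $M$ by symplectomorphisms preserving $\nabla$, so $\nabla$ is a homogeneous symplectic connection with Ricci-type curvature on the compact homogeneous symplectic manifold $(M,\omega)$, whose fundamental group is trivial and in particular finite. By the theorem of Cahen and Gutt quoted above, $(M,\omega)$ would then be symplectomorphic to $(\mathbb{P}_n(\mathbb{C}),\omega_0)$ for some $n$; in particular $M$ would be diffeomorphic to $\mathbb{P}_n(\mathbb{C})$. This is impossible for our choice of $M$: for instance $\mathbb{P}_1(\mathbb{C})\times\mathbb{P}_1(\mathbb{C})$ has $b_2=2$ while every $\mathbb{P}_n(\mathbb{C})$ has $b_2=1$ (and $\mathrm{Gr}_2(\mathbb{C}^4)$ is separated from $\mathbb{P}_4(\mathbb{C})$ by $b_4$). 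This contradiction shows $\nabla$ is not of Ricci-type; the parenthetical assertion about S-type connections then follows from the first step.

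I expect the only step requiring a little care to be the verification that the Levi-Civita connection of a Hermitian symmetric space is ``homogeneous'' in the precise sense the Cahen-Gutt theorem demands, namely invariant under a \emph{transitive} group of symplectomorphisms; but this is immediate from the transitivity of the isometry group together with the fact that isometries of a Kähler manifold preserving $J$ also preserve $\omega$ and $\nabla$. The remaining ingredients — the existence of a compact simply connected Hermitian symmetric space that is not a projective space, and enough of its topology to distinguish it from every $\mathbb{P}_n(\mathbb{C})$ — are standard.
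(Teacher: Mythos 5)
Your proposal is correct and follows essentially the same route as the paper, which derives the corollary from the Cahen--Gutt theorem by observing that compact, simply connected Hermitian symmetric spaces other than $\mathbb{P}_n(\mathbb{C})$ carry locally symmetric (hence S-type) symplectic connections that cannot be of Ricci-type. Your additional details --- verifying the homogeneity hypothesis and distinguishing $\mathbb{P}_1(\mathbb{C})\times\mathbb{P}_1(\mathbb{C})$ from every $\mathbb{P}_n(\mathbb{C})$ via Betti numbers --- are correct and merely make explicit what the paper leaves implicit.
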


\section{Preferred connections over symplectic surfaces}

The special properties of the 2-dimensional case force us to treat it separately. The curvature of a symplectic connection over a symplectic surface will \emph{always} be determined by its Ricci curvature - a property they share with Ricci-type connections - but contrary to Ricci-type connections, not all of them are preferred. As a counter-example, one can take the Levi-Civita connection of any non-locally-symmetric Kähler surface since we have the following:

\begin{proposition}
    Let $(M,g,J)$ be a Kähler surface. If its Levi-Civita connection is preferred (for the Kähler form), then $(M,g,J)$ is a locally symmetric Hermitian space. 
\end{proposition}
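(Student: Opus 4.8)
The plan is to reduce the statement to the Sekigawa--Vanhecke theorem (Theorem \ref{thm:SkVh}). Consistently with the present section, I read $(M,g,J)$ as a Kähler surface of real dimension two, i.e.\ $M$ is a symplectic surface carrying its compatible complex structure $J$. Since $(M,g,J)$ is Kähler, its Kähler form $\omega=g(-,J-)$ is parallel and the Levi-Civita connection $\nabla$ is a symplectic connection for $\omega$; in particular $(M,g,J)$ is almost Hermitian. Hence, if I can show that the local geodesic symmetries of $\nabla$ preserve $\omega$ --- that is, that $\nabla$ is of type S --- then Theorem \ref{thm:SkVh} applies and yields at once that $(M,g,J)$ is a locally symmetric Hermitian space. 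So the whole task is to promote the hypothesis ``$\nabla$ is preferred'' to ``$\nabla$ is of type S''.

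For this I would invoke the result established earlier in this section, announced as part (ii) of the corollary of the introduction: every \emph{analytic} preferred connection on a symplectic surface is of type S. The two ingredients it requires are that $\nabla$ be preferred --- which is assumed --- and that $\nabla$ be analytic. The whole subtlety is concentrated in the analyticity, since a general Kähler metric on a surface need not be real-analytic.

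I would extract analyticity from the preferred hypothesis itself, using two-dimensionality. Writing $r$ for the symmetric Ricci tensor of $\nabla$, preferredness reads $\mathfrak{S}_{X,Y,Z}(\nabla_X r)(Y,Z)=0$. On a symplectic surface the curvature is entirely determined by $r$, and $r$ equals $\tfrac{s}{2}\,g$ with $s$ the scalar curvature; substituting this into the cyclic identity and evaluating against a $g$-orthonormal frame collapses it to $ds=0$. Thus $s$ is locally constant, $(M,g)$ is locally a space form, its metric is real-analytic, and therefore $\nabla$ is analytic. This curvature collapse, special to dimension two, is the technical heart of the argument.

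With analyticity in hand I close the loop: the ``analytic preferred $\Rightarrow$ S-type'' result gives $s_p^\star\omega=\omega$ for the local geodesic symmetries, and Theorem \ref{thm:SkVh} then delivers that $(M,g,J)$ is a locally symmetric Hermitian space. The main obstacle is exactly this analyticity bridge; it is resolved because the preferred condition, in the surface case, already forces the constant-curvature rigidity that guarantees analyticity, so the two hypotheses of the S-type criterion are not independent. One could instead conclude $\nabla R=0$ directly from the constancy of $s$ and bypass Sekigawa--Vanhecke, but routing through the S-type property is what links the statement to the main theorem of the paper.
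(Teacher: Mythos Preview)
The paper does not actually supply a proof for this proposition; it is stated without argument as a motivating observation just before Theorem~\ref{thm:PreferredDim2QrCdts}. So there is no reference proof to compare against, and I can only assess your argument on its own.

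Your proof is correct, but circuitous, and it leans on forward references: Corollary~\ref{cor:TypeSPreferredAnalyticSurface} appears \emph{after} this proposition in the paper's logical order, and the proposition is placed precisely to motivate that later material, so it ought to stand independently of it. The decisive computation you carry out --- that in real dimension two $r=\tfrac{s}{2}g$, so the preferred condition collapses to $ds=0$ --- already finishes the proof on the spot. With $R(X,Y)Z=\tfrac{s}{2}\bigl(g(Y,Z)X-g(X,Z)Y\bigr)$ and $s$ locally constant one has $\nabla R=0$ directly; parallelism of $J$ then makes the local geodesic symmetries holomorphic isometries, so $(M,g,J)$ is locally Hermitian symmetric. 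You note this shortcut yourself in the last paragraph, and it is the argument that belongs here: once you have $ds=0$, passing through analyticity, the S-type criterion, and Sekigawa--Vanhecke adds nothing but length.
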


Thus if we aim to find an analogue of theorem \ref{thm:RicciTypeQrCdts} in the 2-dimensional setting, it seems that we should consider preferred symplectic connections over symplectic surfaces. The following result suggests that this is indeed the right approach:
\begin{theorem}\label{thm:PreferredDim2QrCdts}
    Let $(M,\omega,\nabla)$ be a Fedosov manifold of dimension two. If $\nabla$ is preferred, then $(M,\omega,\nabla)$ satisfies the (Q$r$) conditions for all $r=3,5,7,...$.
\end{theorem}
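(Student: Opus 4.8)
The plan is to use the fact that in dimension two both $\mathfrak{sp}(T_pM,\omega_p)$ and the curvature tensor are as simple as possible. For any endomorphism $A$ of a two-dimensional symplectic vector space one has $\omega_p(AY,Z)+\omega_p(Y,AZ)=(\text{tr}\,A)\,\omega_p(Y,Z)$, so that $A^\top=-A$ if and only if $\text{tr}\,A=0$; thus $\mathfrak{sp}(T_pM,\omega_p)$ is exactly the space of traceless endomorphisms. In view of the reformulation of $(\mathrm Q r)$ given just after Theorem~\ref{theorem:Qr} — namely that $(\mathrm Q r)$ holds at $p$ iff $P^r_0\in\mathfrak{sp}(T_pM,\omega_p)$ for every $X_p$ — it therefore suffices to prove that $\text{tr}(P^r_0)=0$ for every $p\in M$, every odd $r\geq 3$ and every $X_p\in T_pM-\{0\}$.

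The first step is to describe $\Pi$ and its covariant derivatives in dimension two. Since $\Lambda^2T^*_pM$ is one-dimensional, spanned by $\omega_p$, every element of $\Lambda^2T^*_pM\otimes S^2T^*_pM$ has the form $\omega_p\otimes\sigma$ with $\sigma$ symmetric; as covariant differentiation preserves this symmetry type and $\nabla\omega=0$, this applies to $\underline R_p$ and to each $(\nabla^l_X\underline R)_p$, and contracting to the Ricci tensor identifies the $\sigma$ occurring in $(\nabla^l_X\underline R)_p$ with $(\nabla^l_X r)_p$ (up to a fixed nonzero normalisation constant, irrelevant for what follows). Translating to the $(1,3)$-tensor and using the formula $\nabla^l_X\Pi_0=(\nabla^l R)_p(X_p,-;X_p,\dots,X_p)X_p$, this shows that $\nabla^l_X\Pi_0$ is the rank-one endomorphism $Y\mapsto\omega_p(X_p,Y)\,u_l$, where $u_l\in T_pM$ is determined by $\omega_p(u_l,-)=(\nabla^l_X r)_p(X_p,-)$; in particular $\text{tr}(\nabla^l_X\Pi_0)=\omega_p(X_p,u_l)=-(\nabla^l_X r)_p(X_p,X_p)$.

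Now I bring in the hypothesis. Along the geodesic $\gamma$ with $\dot\gamma(0)=X_p$ the function $t\mapsto r(\dot\gamma,\dot\gamma)$ has derivative $(\nabla_X r)(X,X)$ (because $\nabla_X\dot\gamma=0$), which vanishes since $\nabla$ is preferred; hence this function is constant, so all of its derivatives $(\nabla^l_X r)_p(X_p,X_p)$, $l\geq 1$, vanish. Thus $\text{tr}(\nabla^l_X\Pi_0)=0$ for $l\geq 1$, while $\text{tr}(\Pi_0)=-r_p(X_p,X_p)$ need not vanish. From here the argument is the rank-one bookkeeping already used in the Ricci-type proof: from $\nabla^i_X\Pi_0=\omega_p(X_p,-)u_i$ one gets
\begin{equation*}
\nabla^{i_1}_X\Pi_0\circ\cdots\circ\nabla^{i_k}_X\Pi_0=\Big(\textstyle\prod_{m=2}^{k}\omega_p(X_p,u_{i_m})\Big)\,\nabla^{i_1}_X\Pi_0,
\end{equation*}
which vanishes unless $i_2=\cdots=i_k=0$, in which case it equals $(-r_p(X_p,X_p))^{k-1}\nabla^{i_1}_X\Pi_0$. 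Substituting into \ref{eq:CoeffQ} shows that, for odd $r$, $Q^r_0$ is a scalar linear combination of the endomorphisms $\nabla^{r-2k}_X\Pi_0$ with $1\leq k\leq(r-1)/2$; every exponent $r-2k$ being odd, hence $\geq 1$, each summand is traceless, so $\text{tr}(Q^r_0)=0$.

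Finally I treat the cross terms. Computing the symplectic adjoint gives $(\nabla^l_X\Pi_0)^\top=\omega_p(-,u_l)\,X_p$, hence $(\nabla^l_X\Pi_0)^\top\circ\nabla^m_X\Pi_0=\omega_p(u_m,u_l)\,\big(\omega_p(X_p,-)\,X_p\big)$, an endomorphism of trace $\omega_p(u_m,u_l)\,\omega_p(X_p,X_p)=0$. Since each $Q^q_0$ is a scalar combination of the $\nabla^l_X\Pi_0$, each $(Q^q_0)^\top\circ Q^{r-q}_0$ is a scalar combination of such products, so has vanishing trace; adding up, $\text{tr}(P^r_0)=(r+2)\text{tr}(Q^r_0)+\sum_{q=2}^{(r-1)/2}\binom{r+2}{q+1}\text{tr}\big((Q^q_0)^\top\circ Q^{r-q}_0\big)=0$, which is $(\mathrm Q r)$ at $p$. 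I expect the step requiring the most care to be the second one — justifying that in dimension two $\nabla^l_X\Pi_0$ is rank one with its image direction read off from $(\nabla^l_X r)_p(X_p,-)$ — together with the observation in the third step that being preferred kills $(\nabla^l_X r)_p(X_p,X_p)$ for every $l\geq 1$, not just for $l=1$; once these are in place the remainder is exactly the rank-one arithmetic from the Ricci-type case.
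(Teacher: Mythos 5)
Your proposal is correct and follows essentially the same route as the paper: in dimension two everything reduces to trace conditions, $R=\omega\otimes\rho$ makes each $\nabla^l_X\Pi_0$ rank one, and the preferred condition forces $(\nabla^l_Xr)(X_p,X_p)=0$ for all $l\geq1$, which kills the relevant compositions and traces. The only (harmless) difference is that you pass through $P^r$ and therefore also verify the cross terms $(Q^q)^\top\circ Q^{r-q}$, whereas the paper observes that in dimension two condition (Q$r$) collapses directly to $\text{tr}\,Q^r_0=0$ because the quadratic terms in \ref{eq:Qr} already vanish on account of $Q^q_0X_p=0$, making that extra check unnecessary.
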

\begin{corollary}\label{cor:TypeSPreferredAnalyticSurface}
    Let $\nabla$ be an analytic preferred  connection over a symplectic surface. Then $\nabla$ is of type S.
\end{corollary}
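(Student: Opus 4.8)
The plan is to reduce everything to $2\times 2$ linear algebra along a parallel frame. Fix $p\in M$ and $X_p\in T_pM\setminus\{0\}$, and form the geodesic $\gamma$ with $\dot\gamma(0)=X_p$ as in Section \ref{sect:RecCdt}. Since $\dim M=2$, I would extend $X=\dot\gamma$ to a parallel symplectic frame $(e_1,e_2)$ along $\gamma$ with $e_1=X$ and $\omega(e_1,e_2)=1$; in such a frame $\nabla_X$ acts on the components of any tensor as $\tfrac{d}{dt}$, and an endomorphism lies in $\mathfrak{sp}(T_{\gamma(t)}M,\omega_{\gamma(t)})$ precisely when it is traceless (in dimension two $\mathfrak{sp}$ coincides with the traceless endomorphisms). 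Let $N$ be the endomorphism field with $Ne_1=0$, $Ne_2=e_1$ and $E$ the one with $Ee_1=0$, $Ee_2=e_2$; then $N^2=0$, $NE=N$, $EN=0$, $E^2=E$, $N^\top=-N$, $E^\top=I-E$, and, crucially, $A\circ N=0$ for every endomorphism $A$ with $AX=0$, since $\operatorname{Im}N=\langle X\rangle\subseteq\ker A$.

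The first substantive step is to pin down the shape of the $\nabla^\ell_X\Pi$. From $\Pi X=R(X,X)X=0$ and the identity $(\nabla^\ell_X\Pi)X=0$ already established in Section \ref{sect:RecCdt}, each $\nabla^\ell_X\Pi$ has the form $a_\ell N+b_\ell E$ with $b_\ell=\operatorname{tr}(\nabla^\ell_X\Pi)$. I would then invoke preferredness: $\operatorname{tr}(\nabla^\ell_X\Pi)$ is a Ricci contraction of $\nabla^\ell_X R$ with every remaining slot equal to $X$, and this vanishes for $\ell\ge 1$ — the case $\ell=1$ is exactly the identity $\operatorname{tr}(\nabla_X R)(X,-)X=(\nabla_X r)(X,X)=0$ used in Section \ref{sect:RecCdt}, and the higher cases follow by differentiating $(\nabla_X r)(X,X)\equiv 0$ along $\gamma$ while using $\nabla_XX=0$. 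Hence $\nabla^\ell_X\Pi=a_\ell N$ for all $\ell\ge 1$, while $\Pi=a_0 N+b_0 E$; an easy induction then gives $\Pi^{\,j}=a_0 b_0^{\,j-1}N+b_0^{\,j}E$.

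Now I would substitute this into the expansion \ref{eq:CoeffQ} of $Q^r$. Because $A\circ N=0$ whenever $AX=0$ (and every factor $\nabla^{i}_X\Pi$, as well as $\Pi$ itself, kills $X$), a monomial $\nabla^{i_1}_X\Pi\circ\cdots\circ\nabla^{i_k}_X\Pi$ must vanish unless $i_2=\cdots=i_k=0$: any later factor with positive index is a multiple of $N$, and the factor to its left annihilates $\operatorname{Im}N$. The surviving monomials are $\nabla^{i_1}_X\Pi\circ\Pi^{\,k-1}$ with $i_1=r-2k$. For odd $r$ one has $i_1$ odd and $\ge 1$, so $\nabla^{i_1}_X\Pi\circ\Pi^{\,k-1}=a_{i_1}b_0^{\,k-1}N$, and therefore $Q^r_0$ is a scalar multiple of $N_0$; for even $r$ the extra monomial $\Pi^{\,r/2}$ is present and $Q^r_0\in\langle N_0,E_0\rangle$. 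Finally, for odd $r\ge 3$ and $2\le q\le (r-1)/2$ the integers $q$ and $r-q$ have opposite parity, and a short computation with $N^\top=-N$, $E^\top=I-E$, $NE=N$, $EN=0$, $N^2=0$ — splitting according to which of $q$, $r-q$ is odd — shows that $(Q^q)^\top\circ Q^{r-q}$ is again a scalar multiple of $N_0$. Consequently $P^r_0$ is a scalar multiple of the nilpotent $N_0$, hence traceless, hence in $\mathfrak{sp}(T_pM,\omega_p)$; by the reformulation of Theorem \ref{theorem:Qr} in terms of $P^r$ this is precisely condition (Q$r$) at $p$, and since $p$ and $X_p$ were arbitrary, $(M,\omega,\nabla)$ satisfies (Q$r$) for every odd $r\ge 3$.

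The main obstacle I anticipate is the second step: making sure that preferredness truly forces $\operatorname{tr}(\nabla^\ell_X\Pi)=0$ for every $\ell\ge 1$ along every geodesic. This is what collapses the diagonal part of $\nabla^\ell_X\Pi$ for $\ell\ge 1$ and leaves a single nilpotent direction $N$, after which the argument is purely formal $2\times 2$ matrix bookkeeping. Structurally this mirrors the proof of Theorem \ref{thm:RicciTypeQrCdts}, with the role played there by Lemma \ref{lemma:PropRicciType} (the vanishing of $\nabla^q_X\Pi$ for $q\ge 2$) now being played by the dimension-two constraint together with cyclic-parallelness of the Ricci tensor.
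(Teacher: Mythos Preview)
Your argument is correct and rests on the same two facts as the paper's proof of Theorem \ref{thm:PreferredDim2QrCdts}: that preferredness forces $\operatorname{tr}(\nabla^\ell_X\Pi)=(\nabla^\ell_Xr)(X,X)=0$ for all $\ell\ge 1$, and that in dimension two the compositions $\nabla^{j}_X\Pi\circ\nabla^{q}_X\Pi$ vanish whenever $q\ge 1$. The packaging differs, however. The paper works invariantly, using the curvature formula $R=\omega\otimes\rho$ on a surface to derive those two facts, observes that in dimension two the condition (Q$r$) reduces to $\operatorname{tr}(Q^r_0)=0$, and then kills each monomial in $Q^r$ via the cyclic property of the trace. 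You instead fix a parallel symplectic frame, encode everything in the $N,E$ algebra, and prove the stronger structural statement that for odd $r$ each $Q^r_0$ (and each $(Q^q)^\top\circ Q^{r-q}$) is a scalar multiple of the nilpotent $N_0$, concluding via the $P^r$ reformulation. Your route avoids appealing to $R=\omega\otimes\rho$ and to trace cyclicity, at the cost of the extra $(Q^q)^\top\circ Q^{r-q}$ case analysis that the paper sidesteps by reducing (Q$r$) to a pure trace condition; had you used that reduction, your proof would collapse to the observation that $Q^r_0$ is a multiple of $N_0$ and hence traceless.
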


This happens in part because the conditions (Q$r$) become much simpler (as $h_{12}$ is the only $h_{ij}$ function we must consider), and in part because, as we already mentioned, the curvature tensor $R$ of a symplectic connection over a surface is determined by its Ricci curvature $r$. We begin with the latter statement.

\begin{theorem}
    Let $(M,\omega,\nabla)$ be a Fedosov manifold, with $M$ of dimension 2. Then
    \begin{equation*}
        \underline{R}=-\omega\otimes r
    \end{equation*}
    or equivalently
    \begin{equation*}
        R=\omega\otimes\rho
    \end{equation*}
    where $\rho\in\Gamma(\text{End}(TM))$ is the (symplectic) Ricci endomorphism defined by $\omega(-,\rho-):=r$.
\end{theorem}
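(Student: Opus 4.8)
The plan is to exploit the fact that over a surface the second exterior power of the cotangent bundle is a line bundle, trivialized by $\omega$.

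\emph{Step 1: normal form of $\underline R$.} As recalled in the previous section, for any Fedosov manifold the covariant curvature tensor satisfies $\underline R_x\in\Lambda^2T^*_xM\otimes S^2T^*_xM$, i.e. $\underline R_x$ is skew in its first two arguments and, because $\nabla\omega=0$, symmetric in its last two. When $\dim M=2$ the space $\Lambda^2T^*_xM$ is one-dimensional, spanned by $\omega_x$, so there is a unique $\psi_x\in S^2T^*_xM$ with $\underline R_x=\omega_x\otimes\psi_x$; the symmetry of $\psi_x$ is forced by the symmetry of $\underline R_x$ in its last two slots, and $x\mapsto\psi_x$ defines a smooth symmetric $2$-tensor field $\psi$ with $\underline R=\omega\otimes\psi$. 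Nothing is lost here: the first Bianchi identity $\underline R_x\in\ker a$ holds automatically, since the target $\Lambda^3T^*_xM\otimes S^1T^*_xM$ of the skewsymmetrization map $a$ vanishes when $\dim M=2$ — concretely, this reflects the identity $\omega(X,Y)Z+\omega(Y,Z)X+\omega(Z,X)Y=0$, which holds because the left-hand side is totally antisymmetric in $X,Y,Z$ and hence must vanish on a $2$-dimensional space.

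\emph{Step 2: identifying $\psi$ with $-r$.} It remains to compute the Ricci contraction of $\omega\otimes\psi$. Passing to the $(1,3)$-tensor, $\underline R=\omega\otimes\psi$ becomes $R(X,Y)Z=\omega(X,Y)\,\rho_\psi Z$, where $\rho_\psi\in\Gamma(\text{End}(TM))$ is determined by $\psi(Z,-)=\omega(\rho_\psi Z,-)$. Since a rank-one endomorphism $V\mapsto\alpha(V)\,w$ of a vector space has trace $\alpha(w)$, the Ricci trace is $r(Y,Z)=\mathrm{tr}\bigl(V\mapsto R(Y,V)Z\bigr)=\mathrm{tr}\bigl(V\mapsto\omega(Y,V)\,\rho_\psi Z\bigr)=\omega(Y,\rho_\psi Z)$, and by symmetry of $\psi$ this equals $-\psi(Y,Z)$. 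Hence $\psi=-r$ and $\underline R=-\omega\otimes r$.

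\emph{Step 3: the equivalent statement, and the only subtlety.} The formula $R=\omega\otimes\rho$ then follows either by applying $\omega(-,W)$ to the two candidate expressions and using nondegeneracy of $\omega$, or by noting directly that $\rho_\psi=\rho$: indeed $r$ is symmetric (equivalently $\rho\in\mathfrak{sp}(TM,\omega)$, $\rho$ being defined by $\omega(-,\rho-)=r$), so $\omega(\rho_\psi Z,W)=-r(Z,W)=\omega(\rho Z,W)$ for all $W$. The whole argument is a dimension count followed by a one-line trace computation, so there is no real obstacle; the only thing demanding attention is the bookkeeping of sign conventions — the sign in the definition of $R$, the convention relating $\underline R$ to $R$, the slot contracted in the Ricci trace, and the placement of $\rho$ relative to $\omega$ — together with the symmetry of the Ricci tensor of a symplectic connection (which, in dimension two, is in any case automatic from $\underline R=\omega\otimes\psi$ with $\psi$ symmetric).
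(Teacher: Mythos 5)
Your proof is correct, and it takes a genuinely different (though closely related) route from the paper's. The paper's argument is a single chain of equalities: it starts from the two-dimensional trace identity $\mathrm{tr}(A)\,\omega(X,Y)=\omega(AX,Y)+\omega(X,AY)$ (valid because $\omega$ is a volume form on a surface), applies it to the endomorphism $A=R(Z,-)T$ appearing in the definition of $r$, and then uses the skew/symmetric symmetries of $\underline{R}$ \emph{together with the first Bianchi identity} to collapse the result to $-\underline{R}(X,Y,Z,T)$. You instead first put $\underline{R}$ in the normal form $\omega\otimes\psi$ using the one-dimensionality of $\Lambda^2T^*_xM$, and only then identify $\psi=-r$ by a rank-one trace computation. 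Your route makes the structural reason for the statement transparent (the curvature of a symplectic surface has only a ``Ricci part'' because $\Lambda^2$ is a line bundle trivialized by $\omega$), dispenses with Bianchi — which, as you correctly note, is vacuous in dimension two since $\Lambda^3T^*_xM=0$ — and yields the symmetry of $r$ as a by-product rather than an input. The paper's route is shorter on the page and introduces no auxiliary tensor. Your sign bookkeeping is consistent with the conventions the paper's proof implicitly fixes, namely $\underline{R}(X,Y,Z,T)=\omega(R(X,Y)Z,T)$ and $r(Y,Z)=\mathrm{tr}\bigl(V\mapsto R(Y,V)Z\bigr)$, so the two arguments land on the same signs.
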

\begin{proof}
    Since $M$ is 2-dimensional, the symplectic form $\omega$ is a volume form on $M$ and we have
    \begin{equation*}
        \begin{split}
            r(Z,T)\omega(X,Y)&=tr(R(Z,-)T)\omega(X,Y)=\omega(R(Z,X)T,Y)+\omega(X,R(Z,Y)T)\\
            &=\underline{R}(Z,X,T,Y)-\underline{R}(Z,Y,T,X)=\underline{R}(Z,X,Y,T)+\underline{R}(Y,Z,X,T)\\
            &=-\underline{R}(X,Y,Z,T)
        \end{split}
    \end{equation*}
    making use of the symmetries of $\underline{R}$. In particular, Bianchi's first identity in used in the last equality.
\end{proof}

Using the same notation as in section \ref{sect:RecCdt}, in dimension 2 we need only consider the component $\omega_{12}$ of the symplectic form, so $s^*_p(\omega|_U)=\omega|_U$ if and only if $h_{12}(t)$ is odd, and for $r=3,5,7,...$ the (Q$r$) condition becomes
\begin{equation*}
\begin{split}
    0=&\left.\frac{d^{r+1}}{dt^{r+1}}h_{12}(t)\right|_{t=0}=\omega_p(\partial_1|_p,Q^r_0\partial_2|_p)\\
    =&\omega_p(Q^r_0\partial_1|_p,\partial_2|_p)+\omega_p(\partial_1|_p,Q^r_0\partial_2|_p)\\
    =&(\text{tr}Q^r_0)\omega_{12}(p).
\end{split}
\end{equation*}
Thus $(M,\omega,\nabla)$ satisfies the (Q$r$) condition at $p$ if and only if
\begin{equation}\label{eq:Dim2Qr}
    0=\text{tr}Q^r_0
\end{equation}
for any choice of $X_p\in T_pM-\{0\}$. Notice that (Q$3$) is equivalent to requiring that $\nabla$ be preferred. With these considerations in hand, we can prove the result announced at the beginning of this section.
\begin{proof}[Proof of theorem \ref{thm:PreferredDim2QrCdts}]
    The core observation is the following: for $q\geq1$ and $j\geq0$
    \begin{equation}\label{eq:NullPiComposition}
        \nabla^j_X\Pi\circ\nabla^q_X\Pi=0
    \end{equation}
    which we prove in four steps:
    \begin{enumerate}
        \item Using $R=\omega\otimes\rho$, $\nabla\omega=0$ and $\nabla_X\dot{\gamma}=0$, we get
        \begin{equation}\label{eq:ProvingNullPiComposition-1}
            \nabla^j_X\Pi=\omega(\dot{\gamma},-)(\nabla^j_X\rho)\dot{\gamma}
        \end{equation}
        for all $j\geq0$.
        \item By definition of a preferred connection, $(\nabla_Xr)(\dot{\gamma},\dot{\gamma})=0$. Combining this with $\nabla_X\dot{\gamma}=0$, we deduce
        \begin{equation}\label{eq:ProvingNullPiComposition-2}
            (\nabla^q_Xr)(\dot{\gamma},\dot{\gamma})=0           
        \end{equation}
        for all $q\geq 1$.
        \item Using $\nabla\omega=0$ once again, and $r=\omega(-,\rho-)$ we obtain
        \begin{equation}\label{eq:ProvingNullPiComposition-3}
           \nabla^j_Xr=\omega(-,(\nabla^j_X\rho)-) 
        \end{equation}
        for all $j\geq 0$.
        \item Let $q\geq 1$ and $j\geq 0$. Putting together \ref{eq:ProvingNullPiComposition-1}, \ref{eq:ProvingNullPiComposition-2} and \ref{eq:ProvingNullPiComposition-3} we calculate
        \begin{equation*}
            \begin{split}
                \nabla^j_X\Pi\circ\nabla^q_X\Pi&=\omega(\dot{\gamma},-)\nabla^j_X\Pi((\nabla^q_X\rho)\dot{\gamma})\\
                &=\omega(\dot{\gamma},-)\omega(\dot{\gamma},(\nabla^q_X\rho)\dot{\gamma})(\nabla^j_X\rho)\dot{\gamma}\\
                &=\omega(\dot{\gamma},-)\underbrace{(\nabla^q_Xr)(\dot{\gamma},\dot{\gamma})}_{=0}(\nabla^j_X\rho)\dot{\gamma}=0
            \end{split}
        \end{equation*}
    \end{enumerate}
    Furthermore, for $q\geq1$ we have
    \begin{equation}\label{eq:VanishingTracePiDerivative}
        tr\nabla^q_X\Pi=\frac{d^{q-1}}{dt^{q-1}}tr\nabla_X\Pi=\frac{d^{q-1}}{dt^{q-1}}(\nabla_Xr)(\dot{\gamma},\dot{\gamma})=0
    \end{equation}

    We are now in a position to show that $(M,\nabla)$ satisfies the (Q$r$) conditions. By definition, $\nabla$ satisfies (Q3) so we need only show that
    \begin{equation*}
        \text{tr}Q^r|_0=0
    \end{equation*}
    for $r\geq 5$. We start from formula \ref{eq:CoeffQ}, which we partially restate here for convenience
    \begin{equation*}
        Q^r=\sum c_{i_1...i_k}^r\nabla^{i_1}_X\Pi\circ...\circ\nabla^{i_k}_X\Pi
    \end{equation*}
    where the summation is done over $1\leq k$, $0\leq i_j$ such that $2k+i_1+...+i_j=r$. Fix $r$ odd with $r\geq 5$, $k\geq1$ and $i_j\geq0$ with $r=i_1+...+i_k+2k$. We will show that
    \begin{equation}\label{eq:VanishingTraceCompositionPi}
        \text{tr}(\nabla^{i_1}_X\Pi\circ...\circ\nabla^{i_k}_X\Pi)=0
    \end{equation}
    thus finishing the proof.

    \noindent \underline{Case 1:} if $k=1$ then $i_1=r-2\geq1$ and \ref{eq:VanishingTraceCompositionPi} becomes
    \begin{equation*}
        \text{tr}\nabla^{r-2}_X\Pi=0
    \end{equation*}
    which is a consequence of \ref{eq:VanishingTracePiDerivative}.

    \noindent \underline{Case 2:} if $k\geq2$, since $r$ is odd we cannot have $r=2k$ so there must exist $1\leq j\leq k$ such that $i_j\geq 1$. By the cyclic property of the trace, we can assume without loss of generality that $j=2$. By \ref{eq:NullPiComposition} we have $\nabla^{i_1}_X\Pi\circ\nabla^{i_2}_X\Pi=0$ so that
    \begin{equation*}
        \text{tr}(\nabla^{i_1}_X\Pi\circ\nabla^{i_2}_X\Pi\circ...\circ\nabla^{i_k}_X\Pi)=0.
    \end{equation*}
\end{proof}

\subsection{Non-symmetric S-type in dimension 2}

The family of (geodesically complete) non-locally-symmetric preferred symplectic connections on $\mathbb{R}^2$  constructed by Bourgeois and Cahen in section 11 of \cite{BourgeoisCahen:1998} constitutes a class of non-locally-symmetric symplectic connections of type S. This is an immediate consequence of corollary \ref{cor:TypeSPreferredAnalyticSurface} once one observes that these connections are analytic for the standard analytic structure on $\mathbb{R}^2$, and observing this poses no difficulty thanks to the very explicit formulae derived by Bourgeois and Cahen.

These explicit formulae are obtained by introducing a function $\beta$ and a 1-form $u$ which essentially control the geometry of a symplectic surface with preferred connection (the of $\beta$ and $u$ is another sign that such connections could be seen as the 2-dimensional analogues of Ricci-type connections)xn, as summarized in the theorem below. The proofs of the different statements listed here can be found in \cite{BourgeoisCahen:1998} (mainly section 5). See also \cite{BieliavskyEtAl:2005} section 3.1 for a coordinate-free formulation of some of the results in \cite{BourgeoisCahen:1998}.

\begin{theorem}\label{thm:PreferredSurfaceProperties}
    Let $(M,\omega)$ be a symplectic surface equipped with a preferred connection  $\nabla$. Then
    \begin{enumerate}[(i)]
        \item There exists a 1-form $u$ such that for all $X,Y,Z\in\mathfrak{X}(M)$ one has
        \begin{equation*}
            (\nabla_Xr)(Y,Z)=\omega(Y,X)u(Z)+\omega(Z,X)u(Y).
        \end{equation*}
        \item There exists a function $\beta$ such that
        \begin{equation*}
            \nabla u=\beta\omega.
        \end{equation*}
        \item Defining the vector field $\bar{u}$ by
        \begin{equation*}
            \iota(\bar{u})\omega=u.
        \end{equation*}
        There exist real numbers $A$ and $B$ such that
        \begin{equation*}
            \begin{split}
                r(\bar{u},\bar{u})&=\beta^2+B\\
                \frac 14 tr[\rho^2]&=\beta+A.
            \end{split}
        \end{equation*}
        \item Assume $d\beta\neq0$ and let $U=\{p\in M|r_p(\bar{u},\bar{u})\neq0\}$. Then the preferred connection is given on $U$ by
        \begin{equation}\label{eq:PreferredConnectionFormula}
            \begin{split}
                \nabla_{\bar{u}}\bar{u}&=-\beta\bar{u}\\
                \nabla_{\bar{u}}X_{\beta}&=-\beta X_{\beta}\\
                \nabla_{X_\beta}X_\beta&=(\beta^2+2A\beta-B)\bar{u}.
            \end{split}
        \end{equation}
        Conversely, given a 1-form $u$ and a non-constant function $\beta$ such that $u(X_\beta)=\beta^2+B$ for some real number $B$, equation \ref{eq:PreferredConnectionFormula} defines a preferred symplectic connection over $U:=\{p\in M|\beta^2\neq-B\}$.
        \item The symplectic connection $\nabla$ is locally symmetric if and only if $\beta=0$.
    \end{enumerate}
\end{theorem}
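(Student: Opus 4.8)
The plan is to prove the five assertions in turn, leaning on the previous theorem's identity $R=\omega\otimes\rho$ valid on a symplectic surface, on the symmetry of the Ricci tensor of a Fedosov manifold (so that $\rho\in\mathfrak{sp}(TM,\omega)$ and every curvature endomorphism $R(Y,Z)=\omega(Y,Z)\rho$ lies in $\mathfrak{sp}$), and on reading ``preferred'' as $\mathfrak{S}_{X,Y,Z}(\nabla_Xr)(Y,Z)=0$. For (i), I would observe that at a point $p$ the tensor $\nabla_pr$ lies in $T_p^*M\otimes S^2T_p^*M$ and that the preferred condition says precisely that its total symmetrization vanishes, i.e. $\nabla_pr$ lies in the kernel of the symmetrization map $T_p^*M\otimes S^2T_p^*M\to S^3T_p^*M$, a subspace of dimension $6-4=2$ in dimension two. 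I would then check that $u\mapsto\big[(X,Y,Z)\mapsto\omega(Y,X)u(Z)+\omega(Z,X)u(Y)\big]$ sends $T_p^*M$ into that kernel (the right-hand side symmetrizes to $0$ by a three-term cancellation using $\omega(a,b)=-\omega(b,a)$) and is injective (put $Y=Z$ and use nondegeneracy of $\omega$); a linear injection between two $2$-dimensional spaces is onto, which produces $u$ pointwise, smoothly. Rewriting (i) through $r=\omega(-,\rho-)$ also yields the endomorphism identity $(\nabla_X\rho)Z=u(Z)X+\omega(X,Z)\bar u$ (up to the sign convention of $\iota_{\bar u}\omega=u$), which I would record for later use.

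For (ii) I would differentiate (i) once more; as $\nabla\omega=0$ this gives $(\nabla_W\nabla_Xr-\nabla_{\nabla_WX}r)(Y,Z)=\omega(Y,X)(\nabla_Wu)(Z)+\omega(Z,X)(\nabla_Wu)(Y)$. Antisymmetrizing in $(W,X)$ and invoking the Ricci identity $\nabla^2_{W,X}r-\nabla^2_{X,W}r=-R(W,X)\cdot r$, the curvature contribution is $\mp\omega(W,X)\big(r(\rho Y,Z)+r(Y,\rho Z)\big)$, which vanishes identically because $\rho\in\mathfrak{sp}$ (so $r(\rho Y,Z)=\omega(\rho Y,\rho Z)=-\omega(Y,\rho^2Z)=-r(Y,\rho Z)$). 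What survives is a linear relation among the components of $\nabla u$; evaluating it on a symplectic frame $(e_1,e_2)$ forces $(\nabla_{e_1}u)(e_1)=(\nabla_{e_2}u)(e_2)=0$ and $(\nabla_{e_1}u)(e_2)=-(\nabla_{e_2}u)(e_1)$, i.e. $\nabla u=\beta\omega$ for a smooth function $\beta$. From $\nabla u=\beta\omega$ and $\nabla\omega=0$ I also obtain $\nabla_X\bar u=\pm\beta X$, and the Ricci identity applied to the $1$-form $u$ gives $d\beta=\pm r(-,\bar u)$; I would fix all the sign conventions once here and carry them consistently.

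Part (iii) then becomes a differentiation check with the identities in hand: $\nabla_X\big(r(\bar u,\bar u)\big)$ and $\nabla_X(\beta^2)$ both equal $\pm 2\beta\,r(X,\bar u)$, while $\nabla_X\big(\tfrac14\operatorname{tr}\rho^2\big)$ and $\nabla_X\beta$ both equal $\pm r(X,\bar u)$ (for the first of these, compute $\operatorname{tr}(\rho\,\nabla_X\rho)$ from $(\nabla_X\rho)Z=u(Z)X+\omega(X,Z)\bar u$); hence $r(\bar u,\bar u)-\beta^2$ and $\tfrac14\operatorname{tr}\rho^2-\beta$ are $\nabla$-parallel, i.e. locally constant, which yields the constants $B$ and $A$. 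For (iv), on $U=\{r(\bar u,\bar u)\neq0\}$ the identity $d\beta=\pm\omega(-,\rho\bar u)$ shows that the Hamiltonian vector field $X_\beta$ equals $\mp\rho\bar u$, and $\{\bar u,X_\beta\}$ is a local frame exactly because $\omega(\bar u,\rho\bar u)=r(\bar u,\bar u)\neq0$ there; $\nabla_{\bar u}\bar u=\pm\beta\bar u$ is immediate, and $\nabla_{\bar u}X_\beta$, $\nabla_{X_\beta}X_\beta$ come out of differentiating $X_\beta=\mp\rho\bar u$ using the formula for $\nabla\rho$, together with two-dimensional Cayley--Hamilton $\rho^2=-(\det\rho)I=\tfrac12(\operatorname{tr}\rho^2)I=2(\beta+A)I$ and $u(X_\beta)=\pm(\beta^2+B)$ from (iii), after which one substitutes to reach the displayed formulae. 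The converse of (iv) is a direct verification that these formulae, together with torsion-freeness and $\nabla\omega=0$, determine a symplectic connection whose Ricci tensor is cyclic-parallel.

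Finally, for (v): since $R=\omega\otimes\rho$ and $\nabla\omega=0$, $\nabla R=0$ if and only if $\nabla\rho=0$. Evaluating $(\nabla_X\rho)Z=u(Z)X+\omega(X,Z)\bar u$ at $Z=\bar u$ gives $(\nabla_X\rho)\bar u=\pm u(X)\bar u$, which vanishes for every $X$ iff $u\equiv0$; then $\nabla u=\beta\omega$ with $\omega$ nondegenerate forces $\beta\equiv0$. Conversely, if $\beta\equiv0$ then $\nabla_X\bar u=\pm\beta X=0$ and $d\beta=\pm r(-,\bar u)=0$, so $\rho\bar u=0$; hence $0=\nabla_X(\rho\bar u)-\rho(\nabla_X\bar u)=(\nabla_X\rho)\bar u=\pm u(X)\bar u$, which forces $\bar u\equiv0$ (choose $X$ with $\omega(\bar u,X)\neq0$ wherever $\bar u\neq0$), so $u\equiv0$, $\nabla\rho\equiv0$, and $\nabla$ is locally symmetric. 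The step I expect to be the main obstacle is (ii): correctly extracting, from the antisymmetrized second covariant derivative, the vanishing of the symmetric part of $\nabla u$, and then keeping mutually consistent the three sign conventions ($\iota_{\bar u}\omega=u$, the Hamiltonian field $X_\beta$, and the Ricci identity) across parts (ii)--(v); everything else is two-dimensional linear algebra plus routine covariant differentiation.
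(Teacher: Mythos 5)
The paper does not actually prove this theorem: it defers entirely to Bourgeois--Cahen (section 5 of \cite{BourgeoisCahen:1998}), so your proposal is being compared against a citation rather than an argument. What you sketch is essentially the Bourgeois--Cahen route, reconstructed correctly and with some nice economies. Your proof of (i) by pure linear algebra --- the preferred condition places $\nabla_p r$ in the $2$-dimensional kernel of the symmetrization $T_p^*M\otimes S^2T_p^*M\to S^3T_p^*M$, and $u\mapsto\omega(Y,X)u(Z)+\omega(Z,X)u(Y)$ is an injection of the $2$-dimensional space $T_p^*M$ into that kernel, hence onto --- is clean and correctly isolates why this is a strictly two-dimensional phenomenon. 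Parts (ii), (iii) and (v) check out: the curvature term in the Ricci identity does vanish because $R(W,X)=\omega(W,X)\rho$ with $\rho\in\mathfrak{sp}$, the frame evaluation does kill the symmetric part of $\nabla u$, the two quantities in (iii) do have equal derivatives, and the chain $\nabla R=0\Leftrightarrow\nabla\rho=0\Leftrightarrow u=0\Leftrightarrow\beta=0$ is sound.

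The one place where your write-up falls short of a proof is (iv), and it is exactly the spot you flag yourself. Leaving every sign as $\pm$ is not harmless there: the coefficients in $\nabla_{X_\beta}X_\beta=(\beta^2+2A\beta-B)\bar u$ arise from a cancellation between $\omega(\bar u,X_\beta)=\pm(\beta^2+B)$ and $\beta\rho^2\bar u=2\beta(\beta+A)\bar u$, and depending on how one fixes the conventions ($\iota(\bar u)\omega=u$ versus $\omega(-,\bar u)=u$, the slot ordering implicit in $\nabla u=\beta\omega$, the Hamiltonian convention for $X_\beta$, and $r=\omega(-,\rho-)$) one lands on $(\beta^2+2A\beta-B)$, $(-\beta^2-2A\beta+B)$ or $(3\beta^2+2A\beta+B)$; your sketch does not determine which, so it does not yet verify the displayed formula. (Running the conventions against the explicit example $\beta=y$, $u=(y^2+B)dx$ at the end of the paper is the quickest way to pin them down.) Two smaller gaps: you should say explicitly that the three displayed derivatives determine $\nabla$ on $U$ because torsion-freeness gives the fourth one, $\nabla_{X_\beta}\bar u=\nabla_{\bar u}X_\beta+[X_\beta,\bar u]$; and the converse in (iv), which you dismiss as a direct verification, is actually the bulk of the content there --- one must check that the prescribed connection is torsion-free, satisfies $\nabla\omega=0$, and has cyclic-parallel Ricci tensor, none of which is immediate from the three formulae alone.
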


Let us now proceed with the construction of the examples. To avoid confusion between coordinate indices and exponents we denote the coordinates of $\mathbb{R}^2$ by $(x,y)$. We equip $\mathbb{R}^2$ with its standard constant symplectic form $\omega_0=dx\wedge dy$. We choose real numbers $A$ and $B$ with $B>0$ and endow $(\mathbb{R}^2,\omega_0)$ with the preferred symplectic connection induced by the function $\beta:=y$ and the 1-form $u:=(y^2+B)dx=(\beta^2+B)dx$, as described in (iv) of theorem \ref{thm:PreferredSurfaceProperties}.

We easily compute $X_{\beta}=\partial_x$ and $\bar{u}=-(y^2+B)\partial_y$. From this we verify that
\begin{equation*}
    u(X_\beta)=y^2+B=\beta^2+B
\end{equation*}
ensuring that $\beta$ and $u$ do indeed induce a preferred connection $\nabla$ on $\mathbb{R}^2$. Since $B>0$, $\beta^2+B$ never vanishes, so $\nabla$ is defined on the whole of $\mathbb{R}^2$. In accordance with theorem \ref{thm:PreferredSurfaceProperties} (iv) the Christoffel symbols of $\nabla$ are given by:
\begin{equation*}
    \begin{split}
        \nabla_{\partial_x}\partial_x&=-(y^2+B)(x^2+2Ay-B)\partial_y\\
        \nabla_{\partial_x}\partial_y&=\nabla_{\partial_y}\partial_x=\frac{y}{y^2+B}\partial_x\\
        \nabla_{\partial_y}\partial_y&=-\frac{y}{y^2+B}\partial_y.
    \end{split}
\end{equation*}
Clearly the Christoffel symbols are analytic, so $\nabla$ is analytic. Finally, since $\beta=y$ is non-zero (with nowhere vanishing differential) by theorem \ref{thm:PreferredSurfaceProperties} (v) $(M,\nabla)$ is not locally symmetric.


\end{document}